\def\q{\hfill\rule{1ex}{1ex}}
\def\0{\emptyset}
\def\q{\hfill\rule{1ex}{1ex}}
\def\QEDopen{{\hfill\setlength{\fboxsep}{0pt}\setlength{\fboxrule}{0.2pt}\fbox{\rule[0pt]{0pt}{1.3ex}\rule[0pt]{1.3ex}{0pt}}}}
\newtheorem{theorem}{Theorem}[section]
\newtheorem{lemma}[theorem]{Lemma}
\newtheorem{claim}[theorem]{Claim}
\newtheorem{conjecture}[theorem]{Conjecture}
\newenvironment{provekplF=1}{{\noindent\it Proof of Theorem  \ref{thm: when beta F=1}}.}{\hfill $\square$\par}
\newenvironment{proof}{{\noindent\it Proof.}}{\hfill $\square$\par}
\newcommand{\ber}{\text{Berge-}}
\newcommand{\mh}{\mathcal{H}}
\newcommand{\mg}{\mathcal{G}}
\newcommand{\lf}{\left\lfloor}
\newcommand{\rf}{\right\rfloor}
\newcommand{\lchuto}{\lf\frac{\ell+1}{2}\rf}
\newcounter{cases}
\newcounter{subcases}[cases]
\begin{document}

% --- PAPER INFO ---

\title{Extremal results on Berge disjoint paths}
\author{
    {\small\bf Xiamiao Zhao}\thanks{email:  zxm23@mails.tsinghua.edu.cn}\quad
    {\small\bf Yiyan Zhan}\thanks{email:  zhanyy24@mails.tsinghua.edu.cn}\quad
    % {\small\bf Mengyu Cao}\thanks{email:  myucao@ruc.edu.cn}\quad
    % {\small\bf Zequn Lv}\thanks{email:  lvzq19@mails.tsinghua.edu.cn}\quad
    {\small\bf Mei Lu}\thanks{email: lumei@tsinghua.edu.cn}\\
    {\small Department of Mathematical Sciences, Tsinghua University, Beijing 100084, China.}\\
    % {\small Institute for Mathematical Sciences, Renmin University of China, Beijing 100086, China.}\\
}

%\author{Yichen Wang}
\date{}

\maketitle\baselineskip 16.3pt

\begin{abstract}
The well-known Erd\H{o}s-Gallai Theorem gave the Tur\'an number of paths. Bushaw and Kettle generalized this result to consider the Tur\'an number of disjoint paths. Since then, many studies are focused on the Tur\'an number of linear forest.
For a graph $F$, an $r$-uniform hypergraph $\mathcal{H}$ is a $\text{Berge-} F$ if there is a bijection $\phi: E(F)\to E(\mathcal{H})$ such that $e\subseteq \phi(e)$ for each $e\in E(F)$. When $F$ is a path, we call $\text{Berge-} F$ a Berge path.
The Tur\'an number of Berge paths was initially studied by Gy\H{o}ri, Katona and Lemons. They gave the value of $\text{ex}_r(n,\text{Berge-}P_\ell)$ for $\ell>r+1$. This result is a generalization of Erd\H{o}s-Galli Theorem. 
Since then, the Tur\'an number of Berge paths has received widespread attention.

Recently, Zhou, Gerbner and Yuan initially studied the Tur\'an number of Berge disjoint paths and for the cases when all the paths have odd length.
In this paper, we give a more general result, which gives the exact value of $\mathrm{ex}_r(n,\text{Berge-} kP_{\ell})$ for all $k\geq 2$, $r\ge 3$, and $\ell\geq r+7$. 
\end{abstract}

%\textbf{AMS classification: }\textit{05C75, 05C65, 05C05}\vskip 0.3cm

{\bf Keywords:} Tur\'an number, Berge hypergraph, disjoint paths
\vskip.3cm
\section{Introduction}
A hypergraph $\mh=(V(\mh),E(\mh))$ is consists of a vertex set $V(\mh)$ and a hyperedge set $E(\mh)=\{E\subseteq V(\mh)|E\neq\emptyset\}$. If $|E|=r$ for every $E\in E(\mh)$, then $\mh$ is called an $r$-uniform hypergraph or an $r$-graph. In general, we assume $V(\mh)=\cup_{E\in E(\mh)}E$ and we also use $\mh$ to denote the hyperedge set of $\mh$.
\par
Let $\mathcal{F}$ be a family of $r$-graphs. An $r$-graph $\mathcal{H}$ is called $\mathcal{F}$-free if $\mh$ does not contain any member in $\mathcal{F}$ as a subhypergraph. The Tur\'an number $\mathrm{ex}_{r}(n,\mathcal{F})$ is the maximum number of hyperedges in an $\mathcal{F}$-free $r$-graph on $n$ vertices. When $r=2$, we write $ex(n,\mathcal{F})$ instead of $\mathrm{ex}_{2}(n,\mathcal{F})$.
\par
Let $F=(V(F),E(F))$ be a graph. An $r$-graph $\mathcal{H}$ is a $\ber F$ if there is a bijection $\phi:E(F)\to E(\mh)$ such that $e \subseteq \phi(e)$ for each $e\in E(F)$. For a fixed $F$, there are many hypergraphs that are a $\ber F$, for convenience, we refer to this collection of hypergraphs as $\ber F$.
More specifically, if $V(F)=\{v_1,\ldots,v_s\}$, then for any copy of $\ber F$, denoted as $\mh$, there is $\{v_{1}',\ldots,v_{s}'\}\subseteq V(\mh)$ such that for all $v_i v_j\in E(F)$, $\{v_i',v_j'\}\subseteq \phi(v_i v_j)$. The vertices in $\{v_1',\dots v_s'\}$ are called the \emph{defining vertices} of $\ber F$, and edges in $\{\phi(E):E\in E(F)\}$ are called the \emph{defining hyperedges} of $\ber F$. For $F$ and a $\ber F$ $\mh$, the defining vertices of $\mh$ might not be unique. By ``the defining vertices of $\mh$" we mean one group of these vertices without additional explanation, the same for ``the defining hyperedges of $\mh$". When $F$ is a path, we call $\ber F$ as \emph{Berge path}.
 Berge \cite{berge1984hypergraphs} defined the Berge cycle, and Gy\H{o}ri, Kantona and Lemons \cite{gyHori2016hypergraph} defined the Berge path. Later, Gerbner and Palmer \cite{gerbner2017extremal} generalized the established concept of Berge path to general graphs.

 The (hypergraph) Tur\'an problems are the central topics of extremal combinatorics. The  ($\ber$type) problems about paths have been studied extensively. Let $P_{\ell}$ denote a path with $\ell$ edges, and $kP_{\ell}$ denote $k$ disjoint paths with length $\ell$. Erd\H{o}s and Gallai \cite{erdos1959maximal} gave the following famous conclusion.% about $P_{\ell}$
\begin{theorem}[\cite{erdos1959maximal}]\label{thm: Pl}
    For any $n,\ell\in\mathbb{N}$, $ex(n,P_{\ell})\leq \frac{\ell-1}{2}n$.
\end{theorem}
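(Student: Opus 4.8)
The plan is to prove the bound by induction on $n$, with the structural input supplied by a rotation argument on a longest path; we may assume $\ell\ge 2$, since a $P_1$-free graph is edgeless. The base case $n\le\ell$ is immediate from $e(G)\le\binom{n}{2}=\tfrac{n(n-1)}{2}\le\tfrac{(\ell-1)n}{2}$, which is exactly the estimate that makes disjoint copies of $K_\ell$ extremal. For the inductive step, fix $n>\ell$ and assume the bound for all graphs with fewer vertices. If $G$ is disconnected with components $C_1,\dots,C_t$ (so $t\ge 2$), then each $|V(C_i)|<n$, and applying the inductive hypothesis (or the base-case estimate when $|V(C_i)|\le\ell$) to each $C_i$ and summing gives $e(G)=\sum_i e(C_i)\le\tfrac{\ell-1}{2}\sum_i|V(C_i)|=\tfrac{(\ell-1)n}{2}$. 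Hence we may assume $G$ is connected.

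Now I would take a longest path $P=x_0x_1\cdots x_p$ in $G$. Since $G$ is $P_\ell$-free, $p\le\ell-1$, and since $n>\ell\ge p+1$ there is a vertex of $G$ off $P$; by maximality of $P$, all neighbors of $x_0$ and of $x_p$ lie on $P$. The crux of the argument — and the step I expect to be the main obstacle — is the claim that $\deg(x_0)+\deg(x_p)\le p$ and $x_0x_p\notin E(G)$. To prove it, set $A=\{i:x_0x_i\in E(G)\}\subseteq\{1,\dots,p\}$ and $B=\{i:x_px_i\in E(G)\}\subseteq\{0,\dots,p-1\}$, so $|A|=\deg(x_0)$ and $|B|=\deg(x_p)$. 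If some $i\in A$ had $i-1\in B$, then the edges $x_0x_i$ and $x_px_{i-1}$ together with the sub-paths $x_i\cdots x_p$ and $x_{i-1}\cdots x_0$ of $P$ would form a cycle $Q$ on all $p+1$ vertices of $P$; since $|V(Q)|<n$ and $G$ is connected, there is an edge $zw$ with $z\notin V(Q)$ and $w\in V(Q)$, and then $z$ followed by $Q$ traversed starting from $w$ is a path on $p+2$ vertices, contradicting the choice of $P$. Hence the sets $A-1:=\{i-1:i\in A\}$ and $B$ are disjoint subsets of $\{0,\dots,p-1\}$, so $\deg(x_0)+\deg(x_p)=|A|+|B|\le p$; and $x_0x_p\in E(G)$ would put $p-1\in(A-1)\cap B$, so in fact $x_0x_p\notin E(G)$.

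To conclude, I would delete $x_0$ and $x_p$ from $G$. As no edge joins them, exactly $\deg(x_0)+\deg(x_p)\le p\le\ell-1$ edges are removed, so $G':=G-\{x_0,x_p\}$ is a $P_\ell$-free graph on $n-2\ge 1$ vertices with $e(G)\le e(G')+(\ell-1)$; the inductive hypothesis gives $e(G')\le\tfrac{(\ell-1)(n-2)}{2}$, and adding $\ell-1$ yields $e(G)\le\tfrac{(\ell-1)n}{2}$, closing the induction. Essentially all of the content sits in the rotation claim of the second paragraph; the remaining points needing care are merely that in the connected case $P$ genuinely omits a vertex (so the cycle $Q$ can be extended to a longer path) and that each invocation of the inductive hypothesis is to a graph with strictly fewer than $n$ vertices.
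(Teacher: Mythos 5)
The paper does not prove this statement at all: Theorem \ref{thm: Pl} is quoted from Erd\H{o}s and Gallai \cite{erdos1959maximal} as a known result, so there is no in-paper argument to compare yours against. Your proof is correct and complete: the base case $n\le\ell$, the reduction to connected graphs, the rotation step producing a spanning cycle of the longest path and its extension through connectivity to a longer path, the resulting bound $\deg(x_0)+\deg(x_p)\le p\le\ell-1$ together with $x_0x_p\notin E(G)$, and the deletion of the two endpoints all fit together without gaps (including the boundary cases $i=1$ and $i=p$ of the rotation, where one of the two chords is a path edge, which is exactly what rules out $x_0x_p\in E(G)$ since $p-1\in B$ always). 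This is the classical induction-on-$n$ proof of the Erd\H{o}s--Gallai bound via endpoint deletion at a longest path, so it is a perfectly adequate substitute for the citation.
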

Bushaw and  Kettle \cite{bushaw2011turan} considered generalization of the Erd\H{o}s-Gallai Theorem about disjoint paths and obtained the following result.
\begin{theorem}[\cite{bushaw2011turan}]\label{thm:kPl}
For $k\geq 2,\ell\geq 3$, and $n\geq 2(\ell+1)+2k(\ell+1)\left(\lceil\frac{\ell+1}{2}\rceil+1\right)\binom{\ell+1}{\lfloor\frac{\ell+1}{2}\rfloor}$,
\begin{equation*}
    ex(n,kP_{\ell})=\left(k\lf\frac{\ell+1}{2}\rf-1\right)\left(n-k\lf\frac{\ell+1}{2}\rf+1\right)+\binom{k\lf\frac{\ell+1}{2}\rf-1}{2}+c_{\ell},
\end{equation*}
where $c_\ell=1$ if $\ell$ is even and $c_{\ell}=0$ if $\ell$ is odd.
\end{theorem}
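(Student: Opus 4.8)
The plan has a construction half and an extremal half.

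\textbf{Lower bound.} Set $s=k\lf\frac{\ell+1}{2}\rf-1$ and let $G^{*}=K_{s}+H$ (join), where $H$ is the edgeless graph on $n-s$ vertices if $\ell$ is odd and is one edge together with $n-s-2$ isolated vertices if $\ell$ is even; then $e(G^{*})=\binom{s}{2}+s(n-s)+c_{\ell}$, giving the lower bound once $G^{*}$ is shown to be $kP_{\ell}$-free. For that, suppose $Q_{1},\dots,Q_{k}$ were vertex-disjoint copies of $P_{\ell}$ and write $a_{i}=|V(Q_{i})\cap V(K_{s})|$. Every edge of $Q_{i}$ except at most one (an exceptional edge is possible only when $\ell$ is even, namely the edge of $H$) is incident to $V(K_{s})$, so the $Q_{i}$-degrees of the vertices of $V(Q_{i})\cap V(K_{s})$ sum to at least $\ell-1$; since each such degree is at most $2$ this forces $a_{i}\ge\lc\frac{\ell-1}{2}\rc=\lf\frac{\ell+1}{2}\rf$, and a one-line parity check shows the same bound even for a copy using the edge of $H$ when $\ell$ is even. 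Then $\sum_{i}a_{i}\ge k\lf\frac{\ell+1}{2}\rf=s+1>s=|V(K_{s})|$, contradicting disjointness.

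\textbf{Upper bound.} Let $G$ be $kP_{\ell}$-free on $n$ vertices with $e(G)$ maximum; I would argue by induction on $n$, the bounded base cases being checked directly. If $\delta(G)\le s$, delete a vertex $v$ of minimum degree: $G-v$ is $kP_{\ell}$-free, so by induction $e(G)\le \deg(v)+e(G-v)\le s+\bigl(s(n-1-s)+\binom{s}{2}+c_{\ell}\bigr)=s(n-s)+\binom{s}{2}+c_{\ell}$, as wanted. Hence it suffices to rule out $\delta(G)\ge s+1$ when $n$ is large, i.e. to prove that a $kP_{\ell}$-free graph on many vertices with $\delta\ge s+1=k\lf\frac{\ell+1}{2}\rf$ cannot exist. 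Since $s+1\ge\ell$ (as $k\ge 2$), such a $G$ contains a $P_{\ell}$; take a maximal vertex-disjoint family $\mathcal{P}$ of copies of $P_{\ell}$, with $m\le k-1$ members and (bounded) vertex set $U$. Then $G-U$ is $P_{\ell}$-free, so $e(G-U)\le\frac{\ell-1}{2}(n-|U|)$ by Theorem \ref{thm: Pl}, and therefore a positive proportion of the vertices of $V(G)\setminus U$ have fewer than $\ell$ neighbours inside $G-U$; as $\delta(G)\ge s+1$ each such vertex has at least $s+2-\ell\ge 1$ neighbours inside the bounded set $U$. A pigeonhole over the $\le 2^{|U|}$ possible neighbourhoods inside $U$ then yields a set $W$ of such vertices with $|W|$ as large as we please, all sharing a common neighbourhood $U_{0}\subseteq U$. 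The endgame is a rerouting step: using the vertices of $W$ as an unlimited supply of spare vertices and the vertices of $U$ as connectors, one reconstructs $\mathcal{P}$ together with one additional vertex-disjoint copy of $P_{\ell}$, built as an alternating path that spends exactly $\lf\frac{\ell+1}{2}\rf$ connectors --- possible because $\delta(G)\ge s+1=k\lf\frac{\ell+1}{2}\rf$ makes $k\lf\frac{\ell+1}{2}\rf$ connectors available --- contradicting maximality of $\mathcal{P}$.

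\textbf{Main obstacle.} All the weight sits in that rerouting lemma: that a bounded hub set together with linearly many feeder vertices can be rewired into $k$ genuinely vertex-disjoint copies of $P_{\ell}$. One must (i) secure enough disjointness among the feeder and connector choices, which is exactly what the very large hypothesis on $n$ buys --- essentially a sunflower/Ramsey-type argument over the at most $\binom{\ell+1}{\lf(\ell+1)/2\rf}$ ``shapes'' a copy of $P_{\ell}$ can take relative to the hubs --- and (ii) treat the parity of $\ell$ exactly: a copy of $P_{\ell}$ allowed to use one edge among the feeders still needs $\lf\frac{\ell+1}{2}\rf$ connectors, and only a copy running through two such edges saves one, which is precisely the phenomenon producing the additive term $c_{\ell}$ (and nothing larger) in the final count; this is the same small parity computation used to verify the construction. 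A secondary technicality, handled along the way, is a preliminary reduction ensuring that the low-degree part of an extremal $G$ is genuinely attached to the high-degree vertices, so that stray low-degree edges cannot be detached from the hub structure.
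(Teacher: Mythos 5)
You are asked here about Theorem \ref{thm:kPl}, which this paper does not prove: it is quoted from Bushaw--Kettle \cite{bushaw2011turan} (whose proof, for the record, runs by induction on $k$, the number of paths, with a rather delicate analysis, not by the minimum-degree/induction-on-$n$ scheme you propose). Judged on its own, your lower bound is the right construction and the verification is essentially sound, up to one slip: $\lceil\frac{\ell-1}{2}\rceil=\lfloor\frac{\ell+1}{2}\rfloor$ holds only when $\ell$ is even. For odd $\ell$ the graph $H$ is edgeless, so \emph{all} $\ell$ edges of each $Q_i$ meet $K_s$ and one gets $a_i\ge\lceil \ell/2\rceil=\lfloor\frac{\ell+1}{2}\rfloor$; with that case distinction made explicit the counting $\sum_i a_i\ge s+1>s$ goes through.

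The upper bound, however, has genuine gaps. First, the induction on $n$ does not close: deleting a vertex of degree at most $s$ invokes the statement at $n-1$, so the induction must bottom out at the threshold $n_0$ of the theorem, and there the assertion $\mathrm{ex}(n_0,kP_\ell)=s(n_0-s)+\binom{s}{2}+c_\ell$ is exactly the theorem itself --- it is not a ``bounded base case'' that can be ``checked directly'', and you cannot start lower because the formula is false for small $n$ (e.g.\ $K_{k(\ell+1)-1}$ together with isolated vertices exceeds it). So the reduction to the case $\delta(G)\ge s+1$ is not established; alternatively, if one iterates the deletion down to a bounded core with $\delta\ge s+1$, the crude count $e(G)\le s(n-n')+\binom{n'}{2}$ overshoots the exact constant $-\binom{s+1}{2}+c_\ell$ when the core is, say, a clique on about $2s+1$ vertices, so the exact additive term is never recovered by this accounting. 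Second, and more seriously, the ``rerouting lemma'' that you yourself identify as carrying all the weight is only asserted, and the mechanism sketched is quantitatively insufficient: a feeder vertex is guaranteed only $s+2-\ell$ neighbours inside $U$, which equals $1$ when $k=2$ and $\ell$ is even (and $2$ when $k=2$, $\ell$ odd), so the pigeonholed common neighbourhood $U_0$ may be a single vertex and cannot supply the $\lfloor\frac{\ell+1}{2}\rfloor$ distinct connectors your alternating path requires --- the hypothesis $\delta(G)\ge k\lfloor\frac{\ell+1}{2}\rfloor$ counts neighbours anywhere in $G$, not inside $U$. Moreover every connector lies in $U=V(\mathcal{P})$, so any new path through $U_0$ necessarily meets the old paths, and the claimed simultaneous reconstruction of all $k-1$ members of $\mathcal{P}$ plus one further disjoint copy of $P_\ell$ is precisely the hard content of the theorem; no argument for it is given. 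As it stands the upper-bound half is a plausible outline, not a proof.
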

A forest is called \emph{linear forest} if all its components are paths, specifically, $kP_\ell$ is a linear forest. Many studies are focused on the Tur\'an number of linear forest, see for example \cite{bushaw2011turan,lidicky2012turan,yuan2021turan, Zhang2025TuranTP}.

Gy\H{o}ri, Katona and Lemous \cite{gyHori2016hypergraph} generalized the Erd\H{o}s-Gallai theorem to Berge paths in hypergraphs. Specifically they determined $\mathrm{ex}_{r}(n,\ber P_{\ell})$ for the case when $\ell>r+1>3$ and the case when $r\geq\ell>2$. The case when $\ell=r+1>2$ was settled by Davoodi, Gy\H{o}ri, Methuku and Tompkins
\cite{DavoodiGyoriMethukuTompkins2018}.
\begin{theorem}[\cite{gyHori2016hypergraph},
\cite{DavoodiGyoriMethukuTompkins2018}]
\label{thm: turan of BPl}

    (1) If $\ell\geq r+1>3$, then $\mathrm{ex}_r(n,\ber P_\ell)\leq \frac{n}{\ell}{\ell\choose r}$. Furthermore, this bound is sharp whenever $\ell$ divides $n$.

    \noindent(2) If $r\geq \ell>2$, then $\mathrm{ex}_r(n,\ber P_\ell)\leq \frac{n(\ell-1)}{r+1}$. Furthermore, this bound is sharp whenever $r+1$ divides $n$.
\end{theorem}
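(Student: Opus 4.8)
The statement packages, in each of its two regimes, an extremal construction together with a matching upper bound; I would treat the upper bounds as the real content (they are the Gy\H{o}ri--Katona--Lemons theorem for $\ell>r+1$ and in case (2), and the Davoodi--Gy\H{o}ri--Methuku--Tompkins theorem for the case $\ell=r+1$ of (1)) and the lower bounds as routine.

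\textbf{Lower bounds.} Both sharpness claims come from disjoint unions. For (1), when $\ell\mid n$, let $\mathcal H_1$ be the disjoint union of $n/\ell$ copies of the complete $r$-graph $K_\ell^{(r)}$; it has $\frac n\ell\binom\ell r$ hyperedges, and it is $\ber P_\ell$-free because a Berge path of length $\ell$ is connected and has $\ell+1$ distinct defining vertices, more than any one component offers. For (2), when $(r+1)\mid n$, build $\mathcal H_2$ on $n/(r+1)$ pairwise disjoint $(r+1)$-sets by placing on each any $\ell-1$ of its $r$-subsets as hyperedges; it has $\frac n{r+1}(\ell-1)$ hyperedges, and since a $\ber P_\ell$ needs $\ell$ distinct hyperedges inside one component while every component of $\mathcal H_2$ carries only $\ell-1$, it is $\ber P_\ell$-free.

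\textbf{Upper bounds: setup.} Since in both regimes the target value is linear in $n$, it suffices to prove, for every connected $\ber P_\ell$-free $r$-graph $\mathcal H$, the per-vertex inequality $e(\mathcal H)\le\frac1\ell\binom\ell r\,v(\mathcal H)$ in case (1) and $e(\mathcal H)\le\frac{\ell-1}{r+1}v(\mathcal H)$ in case (2), and then sum over connected components; this connected statement is the crux. A tempting shortcut is to pass to an ordinary graph by greedily picking, hyperedge by hyperedge, one previously-unused pair from inside each hyperedge: distinct $P_\ell$-edges then come from distinct hyperedges, so the graph is $P_\ell$-free and Theorem \ref{thm: Pl} applies; but a hyperedge all of whose $\binom r2$ pairs have already been taken must span a $K_r$ of that graph, and controlling the number of such hyperedges costs a generalized-Tur\'an term $ex(n,K_r,P_\ell)$ that breaks exactness. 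So the exact value has to be obtained by a genuinely hypergraph argument.

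\textbf{Upper bounds: the arguments and the obstacle.} For (1), with $\ell\ge r+1$: if $v(\mathcal H)\le\ell$ the bound is immediate from $\binom vr=\frac vr\binom{v-1}{r-1}\le\frac v\ell\binom\ell r$, and if $v(\mathcal H)>\ell$ I would run a longest-Berge-path argument in the spirit of Erd\H{o}s--Gallai and Kopylov: fix a longest Berge path $P=v_0e_1v_1\cdots e_sv_s$ (so $s\le\ell-1$), use connectivity to find a hyperedge reaching outside $V(P)$, and exploit the maximality of $P$ to confine all hyperedges to a controlled position relative to $V(P)$ and count. For (2), with $r\ge\ell$, I would instead induct on $e(\mathcal H)$: once $e(\mathcal H)\ge\ell$, locate a hyperedge many of whose vertices lie in no other hyperedge---forced because, when $r\ge\ell$, even moderately overlapping hyperedges already host a Berge path of length $\ell$---then delete that edge together with its private vertices and recurse, the ``$+1$'' in $r+1$ absorbing those private vertices together with the removed edge; the base cases $e(\mathcal H)\le\ell-1$ are immediate. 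The single hard point in both regimes is that Berge paths lack the clean rotation/exchange moves of ordinary paths, because each edge of a Berge path carries up to $r-2$ uncommitted vertices; making precise the ``where can a further hyperedge live'' step in (1) and the ``which hyperedge has enough private vertices'' step in (2) is exactly the technical core of \cite{gyHori2016hypergraph} and \cite{DavoodiGyoriMethukuTompkins2018} respectively, the latter handling the special case $\ell=r+1$ of (1), where the assertion reads simply $\mathrm{ex}_r(n,\ber P_{r+1})\le n$.
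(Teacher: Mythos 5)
The paper never proves this statement: Theorem \ref{thm: turan of BPl} is quoted as a known result of \cite{gyHori2016hypergraph} and \cite{DavoodiGyoriMethukuTompkins2018} and used as a black box, so there is no internal proof to compare against. Measured on its own terms, your proposal is complete and correct only for the sharpness claims: disjoint copies of $K_\ell^{(r)}$ for (1), and blocks consisting of $\ell-1$ of the $r$-subsets of an $(r+1)$-set for (2), are exactly the standard extremal configurations, and your verifications (a Berge path is connected, so its $\ell+1$ defining vertices, resp.\ its $\ell$ distinct hyperedges, must live in a single component) are sound, as are the reduction of the upper bound to connected components and the easy case $v(\mathcal H)\le\ell$ of (1).

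For the upper bounds — which are the entire content of the theorem — there is a genuine gap: the longest-Berge-path argument you sketch for (1) and the private-vertex induction for (2) are described but not executed, and you yourself defer the decisive steps (``where can a further hyperedge live'' relative to a maximal Berge path, and ``which hyperedge has enough vertices of degree one'') to the cited papers. As written, the proposal is an outline plus a citation rather than a proof; in particular nothing in it rules out the failure modes you correctly identify (Berge paths lack rotation/exchange moves, and the greedy reduction to a $P_\ell$-free graph loses exactness through hyperedges spanning already-used pairs). This is acceptable if the intent is, like the paper's, to invoke the results of \cite{gyHori2016hypergraph} and \cite{DavoodiGyoriMethukuTompkins2018}; but if a self-contained proof is intended, the missing lemmas about the structure of hyperedges relative to a longest Berge path (case (1)) and the existence of a hyperedge with at least $r+1-(\ell-1)$ vertices of degree one (case (2)) must be stated and proved, and that is precisely where all the difficulty lies.
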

Let $\mathrm{ex}_r^{con}(n,\mathcal{F})$ denote the maximum number of hyperedges in a connected $ r$-graph on $n$ vertices that does not contain any copy of $\mathcal{F}$ as a sub-hypergraph.
Gy\H{o}ri, Salia and Zamora \cite{gyHori2021connected} determined the value of $\mathrm{ex}_r^{con}(n,\ber P_\ell)$.
\begin{theorem}[\cite{gyHori2021connected}]\label{thm: connect Pl}
    For all integers $n,\ell$ and $r$, there exists $N_{\ell,r}$ such that if $n>N_{\ell,r}$ and $\ell\geq 2r+13\geq 18,$ then
    $$\mathrm{ex}_r^{con}(n,\ber P_\ell)=\binom{\ell'-1}{r-1}(n-\ell'+1)+\binom{\ell'-1}{r}+\mathbb{I}_\ell\cdot\binom{\ell'-1}{r-2},$$
    where $\ell'=\lf\frac{\ell+1}{2}\rf$, $\mathbb{I}_\ell=1$ if $\ell$ is even and $\mathbb{I}_\ell=0$ if $\ell$ is odd.
\end{theorem}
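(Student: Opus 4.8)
The plan is to prove matching lower and upper bounds, the upper bound being the substantial part. \emph{Lower bound.} Fix an $(\ell'-1)$-subset $A$ of an $n$-set $V$, and let $\mathcal{H}_0$ be the $r$-graph consisting of all $r$-sets $e\subseteq V$ with $|e\cap A|\ge r-1$, together with, when $\ell$ is even, the $\binom{\ell'-1}{r-2}$ additional sets $\{x,y\}\cup S$ where $\{x,y\}$ is one fixed pair of vertices outside $A$ and $S\in\binom{A}{r-2}$. Sorting the hyperedges by the value of $|e\cap A|$ shows that $e(\mathcal{H}_0)$ equals the claimed quantity, and $\mathcal{H}_0$ is connected since $|A|\ge 2$. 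To see that $\mathcal{H}_0$ is $\ber P_\ell$-free, note that each hyperedge has at most one vertex outside $A$, the only exceptions being those whose two outside vertices both lie in the fixed pair; hence along any Berge path the defining vertices lying outside $A$ form an ``almost independent'' set of the underlying $P_\ell$, forcing at least $\ell'$ of its $\ell+1$ defining vertices into $A$, which is impossible as $|A|=\ell'-1$. The same count shows that for odd $\ell$ no analogue of the special pair may be added, which explains the indicator $\mathbb{I}_\ell$.

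\emph{Upper bound; reduction to the 2-connected case.} Let $\mathcal{H}$ be a connected $\ber P_\ell$-free $r$-graph on $n>N_{\ell,r}$ vertices. If $\mathcal{H}$ is not 2-connected, choose a cut vertex $v$ splitting it into parts $\mathcal{H}_1,\mathcal{H}_2$ meeting only at $v$; a Berge path must split across $v$, so the longest Berge paths of $\mathcal{H}_1$ and of $\mathcal{H}_2$ ending at $v$ have combined length at most $\ell-1$. Estimating the smaller part by the disconnected Erd\H{o}s-Gallai-type bound of Theorem~\ref{thm: turan of BPl}(1) (valid since $\ell\ge r+1$), estimating the larger part by induction, and optimising over how the length budget is divided, one checks that the total never exceeds the target value, so we may assume $\mathcal{H}$ is 2-connected.

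\emph{The 2-connected case} is the heart of the proof. Take a longest Berge path $Q=u_0u_1\cdots u_t$ with defining hyperedges $f_1,\dots,f_t$, so $t\le\ell-1$; then $V(Q)\cup f_1\cup\cdots\cup f_t$ contains fewer than $\ell r$ vertices, so as $n$ is large some vertex $x$ lies outside it. By 2-connectivity there are two vertex-disjoint Berge paths joining $x$ to $V(Q)$, and one runs P\'osa-type rotation-extension at the ends of $Q$, adapted to Berge hypergraphs by exploiting a ``free'' vertex of a hyperedge incident to an endpoint in order to reroute the path. The dichotomy is: either these operations build a Berge path with $\ell$ edges (and a long Berge cycle through $Q$, together with $x$, would likewise open into such a path), contradicting $\ber P_\ell$-freeness; or the set of endpoints reachable by rotation and its neighbourhood are forced to be small and concentrated, which, following the template of Kopylov's theorem and the Balister-Gy\H{o}ri-Lehel-Schelp refinement for connected graphs, produces a set $A\subseteq V(Q)$ with $|A|\le\ell'-1$ such that all but a bounded number of hyperedges meet $A$ in at least $r-1$ vertices; in particular every hyperedge of $\mathcal{H}$ meets $A$. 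I expect this rotation-and-connectivity analysis, which must track which vertices of each hyperedge are ``used'' and rule out the long Berge cycles that would otherwise extend $Q$, to be the main obstacle.

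\emph{Counting and the exact additive term.} Writing $a:=|A|\le\ell'-1$, the hyperedges with $|e\cap A|\ge r-1$ number at most $\binom{a}{r-1}(n-a)+\binom{a}{r}$, which for large $n$ is maximised at $a=\ell'-1$ and gives the first two terms. Finally the exceptional hyperedges, those with $|e\cap A|\le r-2$ and hence with at least two defining vertices outside $A$, number at most $\binom{\ell'-1}{r-2}$, and none occurs when $\ell$ is odd: otherwise a short chaining argument inserts an extra ``outside'' step into an almost-spanning Berge path of the sub-hypergraph on $A$ plus one outside vertex, producing $\ell$ edges. Together with the reduction this gives $e(\mathcal{H})\le\binom{\ell'-1}{r-1}(n-\ell'+1)+\binom{\ell'-1}{r}+\mathbb{I}_\ell\binom{\ell'-1}{r-2}$, a bound attained by $\mathcal{H}_0$, which proves Theorem~\ref{thm: connect Pl}.
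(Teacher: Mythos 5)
This statement is not proved in the paper at all: Theorem \ref{thm: connect Pl} is quoted from Gy\H{o}ri, Salia and Zamora \cite{gyHori2021connected} and used as a black box, so there is no internal proof to compare against; your proposal must therefore stand on its own, and as written it does not. The lower bound is fine (your construction and the independent-set count on the defining vertices are exactly the standard extremal example, and the parity analysis explaining $\mathbb{I}_\ell$ is correct), but the upper bound is an outline rather than a proof. The entire substance of the theorem is the structural step you describe in one paragraph and yourself flag as ``the main obstacle'': the P\'osa/Kopylov-type rotation--extension analysis for Berge paths that either produces a $\ber P_\ell$ or yields a set $A$ with $|A|\le \ell'-1$ met by essentially every hyperedge in at least $r-1$ vertices, together with the exact control of the exceptional hyperedges (at most $\binom{\ell'-1}{r-2}$ of them, and none when $\ell$ is odd). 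That analysis is the content of the cited paper and is only asserted, not carried out; the ``short chaining argument'' for the additive term is likewise just named.

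Two of the steps you do state would fail as written. First, the reduction to the $2$-connected case: in a hypergraph a hyperedge containing a cut vertex $v$ may meet several components of $\mathcal{H}-v$, so $\mathcal{H}$ need not split into $\mathcal{H}_1,\mathcal{H}_2$ sharing only the vertex $v$; one has to argue in the bipartite incidence graph (or with a hypergraph block decomposition), and the concatenation bound on path lengths needs the hyperedge sets of the two subpaths to be disjoint, which your decomposition no longer guarantees. Second, the induction you invoke there requires the value of $\mathrm{ex}_r^{con}(\cdot,\ber P_{\ell''})$ for shorter lengths $\ell''$, but the displayed formula is only claimed for $\ell\ge 2r+13$, and below that threshold the connected extremal function is different (compare Theorem \ref{thm: turan of BPl}), so ``optimising over how the length budget is divided'' cannot simply cite the theorem being proved. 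Until the rotation--extension core and these reductions are actually established, the proposal is a plausible plan but not a proof.
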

Recently, 
Zhou, Gerbner and Yuan \cite{Zhou2025OnTP} gave the Tur\'an number of the Berge two paths in some cases.
\begin{theorem}[\cite{Zhou2025OnTP}]\label{thm: two berge path}
    Let $r\geq 3$ and $n$ be sufficiently large.
    \par\noindent
    (i) If $\ell$ is an odd integer and $\ell\geq 2r+11$, then
    $$\mathrm{ex}_{r}(n,\ber P_{\ell}\cup P_{1})=\max\left\{\mathrm{ex}_{r}(n,\ber P_{\ell}),\binom{\frac{\ell+1}{2}}{r-1}(n-\frac{\ell-1}{2})+\binom{\frac{\ell+1}{2}}{r}\right\}.$$
    \par\noindent
    (ii) If $\ell_1,\ell_2$ are odd integers and $\ell_1\geq \ell_2\geq r+6$, then
    $$\mathrm{ex}_{r}(n,\ber P_{\ell_1}\cup P_{\ell_2})=\max\left\{\mathrm{ex}_{r}(n,\ber P_{\ell_1}),\binom{\frac{\ell_1+\ell_2}{2}}{r-1}(n-\frac{\ell_1+\ell_2}{2})+\binom{\frac{\ell_1+\ell_2}{2}}{r}\right\}.$$
\end{theorem}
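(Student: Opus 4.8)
I sketch how one would prove Theorem~\ref{thm: two berge path}; throughout write $\ell^\ast=\tfrac{\ell_1+\ell_2}{2}$ in part~(ii) (an integer since $\ell_1,\ell_2$ are odd) and $\ell^\ast=\tfrac{\ell+1}{2}$ in part~(i). The overall scheme is the usual one for disjoint-subgraph Tur\'an problems: a matching pair of constructions, together with a structural argument showing that a $\ber(P_{\ell_1}\cup P_{\ell_2})$-free $r$-graph with many edges is essentially one of them. For the lower bound, any $\ber P_{\ell_1}$-free $r$-graph contains no $\ber(P_{\ell_1}\cup P_{\ell_2})$ at all, which contributes $\mathrm{ex}_r(n,\ber P_{\ell_1})$; the other construction fixes a ``kernel'' $C$ with $|C|=\ell^\ast$ and takes every $r$-set meeting $C$ in at least $r-1$ vertices (with a minor adjustment in part~(i) accounting for the ``$+1$'' in $n-\tfrac{\ell-1}{2}$). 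Its edge count matches the displayed value, and it is $\ber(P_{\ell_1}\cup P_{\ell_2})$-free: no hyperedge has two vertices outside $C$, so a Berge path has no two consecutive defining vertices outside $C$, hence a $\ber(P_{\ell_1}\cup P_{\ell_2})$ --- which has $\ell_1+\ell_2+2$ defining vertices --- would need at least $\tfrac{\ell_1+1}{2}+\tfrac{\ell_2+1}{2}=\ell^\ast+1>|C|$ of them inside $C$.

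For the upper bound, let $\mathcal H$ be $\ber(P_{\ell_1}\cup P_{\ell_2})$-free on $n$ vertices. If $\mathcal H$ is $\ber P_{\ell_1}$-free we are done, so assume it contains a $\ber P_{\ell_1}$, hence also a $\ber P_{\ell_2}$ as $\ell_1\ge\ell_2$. Let $\mathcal P=v_0e_1v_1\cdots e_mv_m$ be a longest Berge path and $V_{\mathcal P}$ its defining-vertex set. Splitting $\mathcal P$ into the vertex- and hyperedge-disjoint sub-Berge-paths $v_0e_1\cdots e_{\ell_1}v_{\ell_1}$ and $v_{\ell_1+1}e_{\ell_1+2}\cdots e_mv_m$ forces $m\le\ell_1+\ell_2$; and the same idea shows $\mathcal H-V_{\mathcal P}$ is $\ber P_{\ell_2}$-free, since a $\ber P_{\ell_2}$ there would be vertex-disjoint from $\{v_0,\dots,v_{\ell_1}\}$ and use hyperedges avoiding $V_{\mathcal P}$, hence distinct from $e_1,\dots,e_{\ell_1}$, completing a forbidden copy. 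In part~(i) this already means $\mathcal H-V_{\mathcal P}$ is edgeless (every $r$-set, $r\ge3$, is a $\ber P_1$), so every hyperedge of $\mathcal H$ meets $V_{\mathcal P}$; in part~(ii), Theorem~\ref{thm: turan of BPl}(1) gives $e(\mathcal H-V_{\mathcal P})\le\tfrac n{\ell_2}\binom{\ell_2}{r}$.

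The crux is that $V_{\mathcal P}$ has size only $\le\ell_1+\ell_2+1$ --- about twice too large --- so one must replace it by a kernel $C$ of size $\ell^\ast$ (or $\ell^\ast-1$) capturing $r-1$ vertices of all but boundedly many hyperedges. This is the Berge analogue of the phenomenon behind Theorem~\ref{thm: connect Pl}: if many defining vertices of $\mathcal P$ each lay in many hyperedges reaching outside $V_{\mathcal P}$, one could alternately enter and leave $V_{\mathcal P}$ to build a Berge path longer than $\ell_1+\ell_2$, or, by P\'osa-type rotations rerouting $\mathcal P$ through such a hyperedge, detach a $\ber P_{\ell_2}$ vertex- and hyperedge-disjoint from a leftover $\ber P_{\ell_1}$; each outcome is impossible. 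Making this precise --- bounding the exceptional ``spread'' hyperedges (those with at least two vertices outside $C$), and deciding whether the optimal kernel has size $\ell^\ast$ (the second term of the maximum) or $\ell^\ast-1$ (which forces $\mathcal H$ to be $\ber P_{\ell_1}$-free, the first term) --- is where essentially all the technical work lies, and where the hypotheses $\ell_2\ge r+6$ and $\ell_1\ge 2r+11$ (resp.\ $\ell\ge 2r+11$) are needed, so that the paths are long enough relative to $r$ for the rotation and $(r-1)$-shadow estimates to have room. I expect this kernel-shrinking step to be the main obstacle.

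Once $C$ is in hand, finish by a direct count: at most $\binom{|C|}{r}$ hyperedges lie inside $C$, at most $(n-|C|)\binom{|C|}{r-1}$ have exactly one vertex outside $C$, and the $O_{\ell_1,\ell_2,r}(1)$ remaining hyperedges are absorbed when $n$ is large; this shows $e(\mathcal H)$ is at most the displayed maximum, which the two constructions attain.
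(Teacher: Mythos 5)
This statement is not proved in the paper at all: Theorem~\ref{thm: two berge path} is quoted verbatim from Zhou, Gerbner and Yuan \cite{Zhou2025OnTP} and is used here as an ingredient (it settles the $k=2$, $\ell$ odd case of Theorem~\ref{thm: berge linear forest}), so there is no in-paper proof to compare your attempt against; it can only be judged on its own merits as a proof of the cited result.

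On those merits there is a genuine gap, and you have named it yourself. The parts you do carry out are fine: the lower-bound constructions, the observation that a longest Berge path has length at most $\ell_1+\ell_2$ (split it into hyperedge- and defining-vertex-disjoint initial and terminal segments), and the fact that $\mathcal H-V_{\mathcal P}$ is $\ber P_{\ell_2}$-free (edgeless in part~(i)). But the entire content of the theorem is the step you defer: shrinking the set $V_{\mathcal P}$, of size up to $\ell_1+\ell_2+1$, to a kernel $C$ of size $\tfrac{\ell_1+\ell_2}{2}$ that meets all but a controlled set of hyperedges in at least $r-1$ vertices, and ruling out (not merely bounding) the hyperedges with two vertices outside $C$. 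Saying that ``P\'osa-type rotations'' and an analogue of Theorem~\ref{thm: connect Pl} should do this, and that this is ``where essentially all the technical work lies,'' is a plan, not a proof; no lemma is stated, and the hypotheses $\ell_2\ge r+6$, $\ell\ge 2r+11$ are never actually used. Moreover, your endgame cannot give the stated equality even granting the kernel: an additive $O_{\ell_1,\ell_2,r}(1)$ surplus of exceptional hyperedges is \emph{not} absorbable, because the upper bound must match the construction exactly (the main terms already coincide), so you would need to show those hyperedges do not occur or are compensated by a deficit elsewhere. Finally, in part~(i) your generic kernel count yields $\binom{\frac{\ell+1}{2}}{r-1}\bigl(n-\frac{\ell+1}{2}\bigr)+\binom{\frac{\ell+1}{2}}{r}$, which is strictly smaller than the claimed value $\binom{\frac{\ell+1}{2}}{r-1}\bigl(n-\frac{\ell-1}{2}\bigr)+\binom{\frac{\ell+1}{2}}{r}$; the ``minor adjustment'' for the extra $\binom{\frac{\ell+1}{2}}{r-1}$ term is left unspecified on both the construction side and the upper-bound side, so the numbers in (i) are not actually matched by your argument.
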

\par\noindent
Note that by Theorem \ref{thm: turan of BPl}, when $n$ is large enough, we have $\mathrm{ex}_{r}(n,\ber P_{\ell_1})<\binom{\frac{\ell_1+\ell_2}{2}}{r-1}(n-\frac{\ell_1+\ell_2}{2})+\binom{\frac{\ell_1+\ell_2}{2}}{r}$. Consequently Theorem \ref{thm: two berge path} (ii)  actually gave the value of $\mathrm{ex}_r(n,\ber 2P_\ell)$ when $\ell \geq r+6$ and $\ell$ is odd.
%\begin{theorem}
Let $M_k$ be a matching with $k$ edges. Khormali and Palmer \cite{KHORMALI2022103506} completely determined the value of $\mathrm{ex}_r(n,\ber M_k)$ for $n$ large enough in the case that $k\geq 1$ and $r\geq 2$.

Let $T_{\ell}$ and $S_{\ell}$ be a tree  and  a star with $\ell$ edges, respectively. In \cite{Zhou2025OnTP},  Zhou, Gerbner and Yuan detemined $\mathrm{ex}_{r}(n,\ber T_{\ell}\cup (k-1)S_i)$ and $\mathrm{ex}_{r}(n,\ber F\cup M_{k-1})$ under some conditions and restrictions. In the same paper, they also give bounds for $\mathrm{ex}_{r}(n,\ber P_{\ell_1}\cup (k-1)S_{\ell_2})$ and $\mathrm{ex}_{r}(n,\ber T_{\ell_1}\cup (k-1)S_{\ell_2})$ in some cases. A hypergraph is said to be linear if every two distinct hyperedges intersect in at most one vertex. Győri and Salia \cite{GYORI202536} considered Berge path problems in linear $3\text{-}$graph and proved that the number of hyperedges of a linear $3\text{-}$graph contained no $\ber P_{k}$ for all $k\geq 4$ is at most $\frac{k-1}{6}n$.

%Analogous to the hypergraph case, the connected version of this problem is also studied. By $ex_{r}^{con}(n,\mathcal{F})$ we mean the maximum number of hyperedges in a connected $\mathcal{F}\text{-}$free $r\text{-}$ graph on $n$ vertices. Gy\H{o}ri, Salia and Zamora \cite{GYORI2021103353} determined $ex_{r}^{con}(n,\ber P_{\ell})$ for some cases, they proved for all integers $n,\ell$ and $r$ there exist an $N_{\ell,r}$ such that for $n>N_{\ell,r}$ and $\ell\geq 2r+13\geq 18$,
   % $$\exc(n,\ber P_\ell)={\lf\frac{\ell-1}{2}\rf\choose r-1}\left(n-\lf\frac{\ell-1}{2}\rf\right)+{\lf\frac{\ell-1}{2}\rf\choose r}+\mathbb{I}_{2\mid \ell}{\lf\frac{\ell-1}{2}\rf\choose r-2},$$
    %where $\mathbb{I}_{2\mid \ell}=1$ if $2\mid \ell$, and $\mathbb{I}_{2\mid \ell}=0$ otherwise.
    Zhou, Gerbner and Yuan \cite{Zhou2025OnTP} also determined $\mathrm{ex}_{r}^{con}(n,\ber P_{\ell_1}\cup\ldots\cup P_{\ell_k})$ when $\ell_{i}$ are all odd integers. Many other studies about the Tur\'an number of Berge copies of graphs are listed here \cite{gerbner2024turan,gerbner2020general,gerbner2019asymptotics,ghosh2024book,gyHori2006triangle,gyHori2012hypergraphs}.

In this paper, we determine the exact value of $\mathrm{ex}_r(n,\ber kP_\ell$ for all $k\geq 2$ , $r\geq 3$ and $\ell\geq 2r+7.$ This generalize the result of Zhou, Gerbner and Yuan \cite{Zhou2025OnTP}. The constant $7$ appears has a similar reason as the constant $13$ in Theorem \ref{thm: connect Pl}. We use different method to solove the case when $k=2$ and when $k\geq 3$. Thus we deal with the different value of $k$ respectively.
%\begin{theorem}\label{thm : k=2 upper bound}
%Let $\ell,r$ be integers with $r\ge 3$, $\ell'\geq r$ and $2\ell'\geq r+7$, where $\ell'=\lfloor\frac{\ell+1}{2}\rfloor$. %Moreover $\ell\geq 2r$ if $\ell$ is even.
%Then there exists $N_{\ell,r}$ such that when $n>N_{\ell,r}$, we have
%$$\mathrm{ex}_r(n,\ber 2P_{\ell})= \binom{2\ell'-1}{r-1}(n-2\ell'+1)+\binom{2\ell'-1}{r}+\mathbb{I}_{\ell}\cdot\binom{2\ell'-1}{r-2},$$
% where $\mathbb{I}_{\ell}=1$ if $\ell$ is even and $\mathbb{I}_{\ell}=0$ if $\ell$ is odd.
%\end{theorem}
\begin{theorem}\label{thm: berge linear forest} Let $\ell,r,k$ be integers with $k\geq 2$, $r\ge 3$, $\ell'\geq r$ and $2\ell'\geq r+7$, where $\ell'=\lchuto$.
Then    there exist an $N_{\ell,r,k}$ such that for $n>N_{\ell,r,k}$,
    $$\mathrm{ex}_r(n,\ber kP_{\ell})={k\ell'-1\choose r-1}(n-k\ell'+1)+{k\ell'-1\choose r}+\mathbb{I}_\ell\cdot{k\ell'-1\choose r-2},$$
    where  $\mathbb{I}_\ell=1$ if $\ell$ is even and $\mathbb{I}_\ell=0$ if $\ell$ is odd.
\end{theorem}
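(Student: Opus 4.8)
The plan is to prove the matching lower and upper bounds separately, with the upper bound done by induction on $k$ (using Bushaw--Kettle's Theorem~\ref{thm:kPl} and the Berge path results Theorems~\ref{thm: turan of BPl} and~\ref{thm: connect Pl} as black boxes).

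\textbf{Lower bound.} I would exhibit the natural extremal configuration $\mathcal H^{*}$ on $[n]$: fix $A\subseteq[n]$ with $|A|=k\ell'-1$, and if $\ell$ is even also fix two vertices $x,y\notin A$. Let $\mathcal H^{*}$ consist of all $r$-sets meeting $A$ in at least $r-1$ vertices, together with (when $\ell$ is even) all $r$-sets $\{x,y\}\cup S$ with $S\in\binom{A}{r-2}$. A direct count gives $e(\mathcal H^{*})=\binom{k\ell'-1}{r-1}(n-k\ell'+1)+\binom{k\ell'-1}{r}+\mathbb I_\ell\binom{k\ell'-1}{r-2}$. The point is that $\mathcal H^{*}$ is $\ber kP_\ell$-free: in any Berge path of length $\ell$ with defining vertices $v_0,\dots,v_\ell$ lying in $\mathcal H^{*}$, at least $\ell'$ of them lie in $A$. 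Indeed, a defining vertex outside $A\cup\{x,y\}$ forces both incident defining hyperedges to be of the first type with that vertex as their unique vertex outside $A$, so both of its path-neighbours lie in $A$; hence the defining vertices outside $A$ form, apart from the two exceptional vertices $x,y$ when $\ell$ is even, an independent set of $P_\ell$, which has at most $\ell+1-\ell'$ vertices (the hypotheses $\ell'\ge r$ and $2\ell'\ge r+7$ are used to make the relevant shadows nonempty and to control the case analysis around $\{x,y\}$). Since $k$ pairwise vertex-disjoint Berge paths of length $\ell$ together need at least $k\ell'>|A|$ distinct vertices of $A$, no $\ber kP_\ell$ occurs.

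\textbf{Upper bound: the easy reductions.} Fix $r,\ell$ and induct on $k\ge 2$. First, the standard minimum-degree reduction: if some vertex $v$ has $\deg_{\mathcal H}(v)\le\binom{k\ell'-1}{r-1}$, delete it; since $\mathcal H-v$ is still $\ber kP_\ell$-free on $n-1$ vertices and the target function drops by exactly $\binom{k\ell'-1}{r-1}$, induction on $n$ closes this case (the bounded-size base configurations are checked directly and transferred back using that $e(\mathcal H)-f(n)$ does not decrease under the deletion). So assume $\delta(\mathcal H)\ge\binom{k\ell'-1}{r-1}+1$. Next, take a \emph{maximum} family of pairwise vertex-disjoint and hyperedge-disjoint Berge paths of length $\ell$; since $\mathcal H$ is $\ber kP_\ell$-free there are $t\le k-1$ of them. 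If $t\le k-2$, then $\mathcal H$ contains no $\ber(k-1)P_\ell$, so $e(\mathcal H)\le\mathrm{ex}_r(n,\ber(k-1)P_\ell)<f(n)$ by the induction hypothesis and we are done; so assume $t=k-1$. Let $U$ be the set of the $(k-1)(\ell+1)$ defining vertices of these paths. Every defining hyperedge of the family meets $U$ in at least two vertices, so $\mathcal H-U$ is $\ber P_\ell$-free, and since $\ell\ge 2\ell'-1\ge r+2$ Theorem~\ref{thm: turan of BPl}(1) gives $e(\mathcal H-U)\le\frac n\ell\binom\ell r$.

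\textbf{Upper bound: the main step.} It remains to show that all but $O_{k,\ell,r}(1)$ of the hyperedges of $\mathcal H$ have at least $r-1$ vertices inside a set $A$ of size at most $k\ell'-1$, after which $e(\mathcal H)\le\binom{|A|}{r-1}(n-|A|)+\binom{|A|}{r}+O(1)\le f(n)$ for $n$ large, with the term $\mathbb I_\ell\binom{k\ell'-1}{r-2}$ coming from the few hyperedges using a parity gadget. I would obtain this by a stability/surgery analysis around the set $U$. Let $A$ be the set of vertices of $U$ of degree exceeding a large constant $C=C(k,\ell,r)$; hyperedges avoiding $A$ either lie in $\mathcal H-U$ (at most $\frac n\ell\binom\ell r$ of them) or meet $U\setminus A$ (at most $|U|\,C=O(1)$ of them), which is still not good enough, so the crux is to show that almost every hyperedge meeting $A$ in fact has $r-1$ of its vertices in $A$, and that $|A|\le k\ell'-1$. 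Both are proved by contradiction via path surgery: a hyperedge with two or more vertices outside the ``concentrated part'', or a set $A$ of size $\ge k\ell'$, can be used --- together with the rich neighbourhoods of the vertices of $A$ (degree $\ge C$) and fresh vertices supplied by the largeness of $n$ --- to reroute and lengthen the $k-1$ disjoint Berge paths into $k$ of them, or to build $k$ disjoint Berge paths directly by using $\ell'$ vertices of $A$ as every second defining vertex of each path and $\le\ell'+1$ fresh vertices in between; this works precisely because $\ell'\ge r$ leaves room in each hyperedge for the prescribed $A$-vertices and $2\ell'\ge r+7$ lets one absorb the even case and the interactions among the $k$ paths' hyperedges. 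Keeping these reroutings pairwise vertex-disjoint and hyperedge-disjoint while respecting the parity gadget is the main obstacle; everything else is bookkeeping. Once the concentration and $|A|\le k\ell'-1$ are established, the count above finishes the proof.
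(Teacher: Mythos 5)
Your lower bound is exactly the paper's construction and your freeness argument for it is correct. The genuine gap is in what you yourself call the main step of the upper bound. You look for the core $A$ inside $U$, the defining vertex set of one maximum family of $k-1$ disjoint $\ber P_\ell$'s, taking its high-degree vertices; but already in the extremal hypergraph $\mh_{\mathbb{I}_\ell}(n,k,\ell)$ the $k-1$ paths can be chosen so that $U$ meets the true core in only $(k-1)\ell'$ vertices, in which case your $A$ has size at most $(k-1)\ell'<k\ell'-1$ and $\Theta(n)$ hyperedges (those through a core vertex outside $U$) have only $r-2$ vertices in $A$; so the two properties you need of $A$ (size at most $k\ell'-1$, and all but $O(1)$ hyperedges meeting it in $r-1$ vertices) cannot both be established for a set confined to $U$. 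More fundamentally, the ``path surgery'' you invoke to rule out a large or non-concentrated core is not available from degree information alone: to build $k$ disjoint $\ber P_\ell$'s using prescribed $\ell'$-sets of core vertices as alternating defining vertices you need those $\ell'$-sets to have $\Omega(n)$ common Berge-neighbours (fresh vertices joined to each prescribed vertex by distinct hyperedges), and a vertex of huge degree gives nothing of the kind. Supplying exactly this is the bulk of the paper's Section 4: Lemma \ref{lem: enough Berge common} extracts from every $\ber P_\ell$ an $\ell'$-set of defining vertices with linearly many Berge-common neighbours (its proof invokes the induction hypothesis in lower uniformities, which is why the paper inducts on $k+r$, with the separately proved $k=2$ case of Section 3 and the graph case as bases, rather than on $k$ alone as you do); Lemma \ref{lem: A is bounded} bounds the union $V(\mg)$ of all such cores by a constant via an iterated doubling construction; and Lemmas \ref{lem: bound V(G0)} and \ref{4.12} pin $|V(\mg)|$ down to exactly $k\ell'-1$. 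Your sketch offers no substitute for any of these steps.

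Two further points. Even granting concentration, ``all but $O(1)$ hyperedges have $r-1$ vertices in $A$'' only yields $f(n)+O(1)$; the exact result needs the exceptional hyperedges to contribute at most $\mathbb{I}_\ell\binom{k\ell'-1}{r-2}$, nothing at all when $\ell$ is odd, and their presence to be offset elsewhere in the count — this is the paper's analysis of $\mathcal{U}$, $X^{+}$ and $\mathcal{U}^{X^{+}}$ (Lemmas \ref{lem: U_2^+}, \ref{lem: U X+ is small}, \ref{lem: U X+ upper}, \ref{X+V(mg)} and Claim \ref{claim: finail-claim}), not bookkeeping. Finally, your opening reduction is not sound as stated: deleting vertices of degree at most $\binom{k\ell'-1}{r-1}$ and ``inducting on $n$'' has no base case, since the claimed formula fails for small $n$ (the complete $r$-graph is $\ber kP_\ell$-free there), and if the deletions exhaust the hypergraph you only get $|\mh|\le\binom{k\ell'-1}{r-1}n$, which exceeds the target by an additive constant; the paper's $k=2$ section instead deletes at threshold $\binom{2\ell'-1}{r-1}-1$ and charges each deleted vertex against the main term with a slack of one, never inducting on $n$.
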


%\noindent{\bf Remark} If we let $k\ge 2$, then the result in Theorem \ref{thm : k=2 upper bound} is contained in Theorem \ref{thm: berge linear forest}. We list this conclusion separately because it is the basis in the proof of Theorem \ref{thm: berge linear forest}.

First, we construct two hypergraphs to obtain the lower bounds of both Theorem \ref{thm: berge linear forest}.
    Let $\mh_0(n,k,\ell)$ and $\mh_1(n,k,\ell)$ be two $r$-uniform hypergraphs with $n$ vertices,
where $V(\mh_0(n,k,\ell))=V(\mh_1(n,k,\ell))=A\cup B$  with  $|A|=k\ell'-1$ and $|B|=n-k\ell'+1$. %The hyperedges consist of three types.
 Let $$\mathcal{E}_1=\left\{E\in\binom{V(\mh)}{r}:E\subseteq A\right\},~~\mathcal{E}_2=\left\{E\in\binom{V(\mh)}{r}:|E\cap B|=1\right\},$$ and fix two vertices $u,v\in B$, set $$\mathcal{E}_3=\left\{E\in \binom{V(\mh)}{r}: E\cap B=\{u,v\} \right\}.$$
If $\ell$ is odd, then set $\mh_0(n,k,\ell)=\mathcal{E}_1\cup \mathcal{E}_2$. If $\ell$ is even, set $\mh_1(n,k,\ell)=\mathcal{E}_1\cup \mathcal{E}_2\cup \mathcal{E}_3$. It is easy to check that $\mh_{\mathbb{I}_{\ell}}(n,k,\ell)$ is $\ber kP_\ell$-free, where $\mathbb{I}_{\ell}=1$ if $\ell$ is even and $\mathbb{I}_{\ell}=0$ if $\ell$ is odd. Then%It is easy to check that $\mh(n,k,\ell)$ is $\ber kP_\ell$-free which implies
\begin{align}\label{3-1}
\mathrm{ex}_r(n,\ber kP_{\ell})\ge {k\ell'-1\choose r-1}(n-k\ell'+1)+{k\ell'-1\choose r}+\mathbb{I}_\ell\cdot\binom{k\ell'-1}{r-2}.
\end{align}

 For the upper bound, we first deal with the case when $k=2$, where we concentrate on the longest Berge path and large Berge cycles in the hypergraph to find the upper bound. Then based on this result we make an induction for $k\geq 3$. The sketch of the proof for the case is to find a set of vertices that intersect with almost all the hyperedges, and the key Lemma \ref{lem: A is bounded} shows this set can be bounded by a constant. Then with further analysis, we determine the exact value of $\mathrm{ex}_r(n,\ber k P_\ell)$.

The paper is organized as follows. In Section 2, we will give some notations and inequalities. Section 3 gives the proof of Theorem \ref{thm: berge linear forest} when $k=2$. Section 4 gives the  completed proof of  Theorem \ref{thm: berge linear forest}. We conclude the paper by two conjectures in Section 5.
\section{Preliminary}
In this section, we define some notations and provide some inequalities which we will use later.
Let  $F$ be a graph and  $\mh$  hypergraph contained a copy of $\ber F$. Denote $DV_\mh (\ber F)$  and $DE_\mh(\ber F)$ the defining vertices and  the defining hyperedges of  $\ber F$ in $\mh$, respectively.

%%%%%%%%%%%%%%%%

Let $\mathcal{H}$ be an $r$-uniform hypergraph and $V_0\subseteq V(\mathcal{H})$. We denote $\mathcal{H}[V_0]=\{E\in E(\mathcal{H}): E\subseteq V_0\}$.
   For a set  $S\subseteq V(\mathcal{H})$,   denote $\mh[V(\mh)\setminus S]$ by $\mh-S$. For short, we set $\mh-v=\mh-\{v\}$. For $\mathcal{E}\subseteq \mathcal{H}$, let   $\mh\setminus \mathcal{E}$ denote the hypergraph obtained by deleting hyperedges in $\mathcal{E}$ and  the resulting isolated vertices from $\mh$. And for a set $S\subseteq V(\mh)$ with $|S|\leq r-1$, let $\mathcal{N}_\mh(S)=\{E\setminus S: S\subseteq E \text{~and ~} E\in \mh\}$. We write $\mathcal{N}_\mh(v)$ instead of $\mathcal{N}_\mh(\{v\})$ for convenience. And let $N_\mh(v)=\{u\in V(\mh):\mbox{there is~} E\in \mh \mbox{~such that~} \{u,v\}\subseteq E\}$.

 For  $X,Y\subseteq V(\mh)$, let $E(X,Y)=\{ E\in \mh: E\cap X \neq \emptyset, E\cap Y\neq \emptyset \text{~and~} E\subseteq X\cup Y \}.$
 For  $S\subseteq V(\mh)$, let $E_\mh(S)=\{E\in \mh: S\subseteq E\}$. We write $E_\mh(v)$ instead of $E_\mh(\{v\})$. The degree of $v$ in $\mh$, denoted by $d_{\mh}(v)$, is $|E_\mh(v)|$.
Let $u,v\in V(\mh)$. We call  $(u,v)$ a \textbf{good pair}, if there exist  $E_1,E_2\in \mh$ with $E_1\not=E_2$ such that $u\in E_1$ and $v\in E_2$. If  there exists an order of $V(\mh)$, say $\{v_1,\dots,v_{|V(\mh)|}\}$, such that $(v_i,v_{i+1})$ is a good pair for $1\le i\le |V(\mh)|-1$, then we call such order of $V(\mh)$  a \textbf{good order}.
Then we have the following result.
\begin{lemma}\label{lem: good order} Let $\mh$
    be an $r$-uniform hypergraph of order $n$  and size $m\geq 2$. Then  $V(\mh)$ has a good order. Moreover, any vertex of $V(\mh)$ could be the first one in a good order.
\end{lemma}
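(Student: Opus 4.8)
The plan is to reformulate a good order as a Hamiltonian path in an auxiliary graph. Let $\Gamma$ be the graph on vertex set $V(\mh)$ in which $uw$ is an edge precisely when $(u,w)$ is a good pair. Since the relation ``$(u,w)$ is a good pair'' is symmetric (interchanging the roles of the two witnessing hyperedges), a good order of $V(\mh)$ is exactly a Hamiltonian path of $\Gamma$, and ``any vertex may be the first'' means that $\Gamma$ has a Hamiltonian path with any prescribed endpoint. Note that every vertex of $\mh$ has degree at least $1$, since $V(\mh)=\bigcup_{E\in\mh}E$. The first step is to describe $\Gamma$ explicitly. If $d_{\mh}(u)\ge 2$ or $d_{\mh}(w)\ge 2$, then $(u,w)$ is automatically a good pair: take two distinct hyperedges through the high-degree vertex and any hyperedge through the other. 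And if $d_{\mh}(u)=d_{\mh}(w)=1$, then $(u,w)$ is a good pair unless the unique hyperedge containing $u$ coincides with the unique hyperedge containing $w$. Hence, writing $D_E=\{v\in E:\ d_{\mh}(v)=1\}$ for $E\in\mh$, the non-edges of $\Gamma$ are exactly the pairs lying inside some $D_E$; since a degree-$1$ vertex lies in only one hyperedge, the sets $D_E$ (over those $E$ with $D_E\ne\emptyset$) are pairwise disjoint, and together with the singletons $\{v\}$ for $d_{\mh}(v)\ge2$ they partition $V(\mh)$. Thus $\Gamma$ is precisely the complete multipartite graph on this partition.

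The crucial step, and the only place where the hypothesis $m\ge2$ enters, is to bound the size $p$ of a largest part of $\Gamma$. If $p\le 1$, then $\Gamma=K_n$ and any ordering works (here $n\ge2$, since $\mh$ has two distinct nonempty hyperedges); otherwise $p=|D_E|\ge2$ for some $E\in\mh$. Pick $E'\in\mh$ with $E'\ne E$. Every vertex of $D_E$ has degree $1$ and so lies only in $E$; hence $D_E\cap E'=\emptyset$, and therefore $n\ge|D_E|+|E'|=p+r\ge 2p$, using $p\le|E|=r$. So a largest part of $\Gamma$ has at most $\lfloor n/2\rfloor$ vertices, and $\Gamma$ has at least two parts.

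It then remains to invoke the classical fact that a complete multipartite graph on $n\ge2$ vertices whose largest part has at most $\lfloor n/2\rfloor$ vertices has a Hamiltonian path starting at any prescribed vertex. For $n=2$ this graph is $K_2$ and the claim is immediate; for $n\ge3$ such a graph is Hamiltonian --- e.g.\ by the Chv\'atal--Erd\H{o}s theorem, since its connectivity equals its minimum degree $n-p\ge p$, which is its independence number, or directly by the standard interleaving construction of a Hamiltonian cycle --- and traversing a Hamiltonian cycle from any chosen vertex produces the required Hamiltonian path. Applying this to $\Gamma$ finishes the proof.

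I expect the only real content to be the two middle observations: that the ``bad'' pairs form vertex-disjoint cliques, so that $\Gamma$ is complete multipartite, and the short counting argument showing that $m\ge2$ forces a largest part down to $\lfloor n/2\rfloor$. The closing Hamiltonicity statement is standard; the only care needed is to treat the degenerate cases $p\le1$ and $n=2$ separately, and to note that the hypothesis $m\ge2$ is genuinely necessary --- for a single hyperedge equal to $V(\mh)$ the graph $\Gamma$ is edgeless, so no good order exists.
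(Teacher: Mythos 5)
Your proof is correct, but it takes a genuinely different route from the paper's. The paper argues by induction on the number of hyperedges: for $m=2$ it interleaves $E_1\setminus E_2$ with $E_2\setminus E_1$ and appends $E_1\cap E_2$, and for $m\ge 3$ it deletes one hyperedge, takes a good order of the remaining hypergraph, and inserts the newly appearing vertices alternately at the front; the ``any vertex may be first'' part is obtained by rotating the constructed order (which is in fact cyclically good). You instead make the structure of the good-pair relation explicit: the pairs that fail to be good are exactly the pairs of degree-one vertices lying in a common hyperedge, so your auxiliary graph $\Gamma$ is complete multipartite with parts $D_E$ and singletons, and a good order is precisely a Hamiltonian path of $\Gamma$. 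Your counting step is right: $m\ge 2$ gives a second hyperedge $E'$ disjoint from $D_E$, so $n\ge |D_E|+|E'|\ge 2p$ and the largest part has size at most $\lfloor n/2\rfloor$; and the appeal to Chv\'atal--Erd\H{o}s (connectivity $n-p\ge p=$ independence number) together with the separate treatment of $n=2$ and $p\le 1$ correctly yields a Hamiltonian path from any prescribed vertex. What your approach buys is a non-inductive, structural argument that isolates exactly when good pairs fail, makes the necessity of $m\ge 2$ transparent, and in fact gives more (for $n\ge 3$ a Hamiltonian cycle of good pairs, hence a good order starting at any prescribed vertex immediately); what it costs is reliance on an external classical theorem, whereas the paper's interleaving induction is elementary and self-contained.
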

 \begin{proof}
     We will prove by induction on  $m$. Let $m=2$. Assume  $E_1\setminus E_2=\{u_1,\dots,u_t\}$ and  $E_2\setminus E_1=\{v_1,\dots,v_t\}$ for some $1\leq t\leq r$, and  $E_1\cap E_2=\{w_1,\dots,w_{r-t}\}$. Then $\{u_1,v_1,u_2,v_2,\dots,u_t,v_t,w_1,w_2,\dots,w_{r-t}\}$ is a good order. Since we can rotate the order and the resulted order remains a good order, any vertex could be the  first one.

    Assume $m\geq 3$. Let $E\in \mh$ and denote $\mh'=\mh\setminus\{E\}$. Then $|V(\mh')|\geq r$ and $|\mh'|=m-1$. By induction hypothesis, there exist a good order of $V(\mh')$, say $\{u_1',\dots,u_{|V(\mh')|}'\}$, and $u_1'$ could be any vertex in $\mh'$. Assume $E\setminus V(\mh')=\{v_1',\dots,v_a'\}$, where $0\leq a\leq r$. If $a=0$, then $\{u_1',\dots,u_{|V(\mh')|}'\}$ is also a good order of $V(\mh)$. If $a\geq 1$, then it's easy to check $\{v_1',u_1',\dots,v_a',u_a',u_{a+1}',\dots,u_{|V(\mh')|}'\}$ is a good order. Since we can rotate the order and the resulting order remains good, any vertex could could be the  first one.
 \end{proof}

 \begin{lemma}\label{lemma: first}
     For integers $\ell'\geq r\geq 3$, we have
     \begin{equation*}
         \binom{2\ell'-1}{r-1}>\frac{1}{2\ell'}\binom{2\ell'}{r}+\frac{2}{2\ell'}\binom{2\ell'}{r-1}+\frac{1}{2\ell'}\binom{2\ell'}{r-2}.
     \end{equation*}
    % where $\ell'=\lfloor\frac{\ell+1}{2}\rfloor.$
 \end{lemma}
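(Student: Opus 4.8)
The plan is to first collapse the right-hand side using the Pascal recurrence twice. Writing $m=2\ell'$ for brevity, one has
$$\binom{m}{r}+2\binom{m}{r-1}+\binom{m}{r-2}=\left[\binom{m}{r}+\binom{m}{r-1}\right]+\left[\binom{m}{r-1}+\binom{m}{r-2}\right]=\binom{m+1}{r}+\binom{m+1}{r-1}=\binom{m+2}{r},$$
so the claimed inequality is equivalent to $2\ell'\binom{2\ell'-1}{r-1}>\binom{2\ell'+2}{r}$. Next I would use the absorption identity $2\ell'\binom{2\ell'-1}{r-1}=r\binom{2\ell'}{r}$ to restate the target as $r\binom{2\ell'}{r}>\binom{2\ell'+2}{r}$.

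Since $\ell'\geq r$, all the quantities $2\ell'+2-r\geq 2\ell'+1-r\geq \ell'+1$ are positive, so I may divide and rewrite the inequality as
$$r>\frac{\binom{2\ell'+2}{r}}{\binom{2\ell'}{r}}=\frac{(2\ell'+2)(2\ell'+1)}{(2\ell'+2-r)(2\ell'+1-r)},$$
i.e. $r(2\ell'+2-r)(2\ell'+1-r)>(2\ell'+2)(2\ell'+1)$. Then I would substitute $a=2\ell'-r$; because $\ell'\geq r$ this forces $a\geq r\geq 3$. The inequality becomes $r(a+2)(a+1)>(a+r+2)(a+r+1)$, and expanding both sides and collecting terms reduces it to
$$(r-1)a^2+(r-3)a-(r^2+r+2)>0.$$

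To finish, since $r\geq 3$ we have $r-1\geq 2>0$ and $r-3\geq 0$, so the left-hand side is nondecreasing in $a$ on $a\geq 0$ and hence, using $a\geq r$, is at least $(r-1)r^2+(r-3)r-(r^2+r+2)=r^3-r^2-4r-2$; this equals $4$ at $r=3$ and has positive derivative for $r\geq 3$, so it is positive throughout, which completes the proof.

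All steps are elementary manipulations of binomial coefficients and a one-variable polynomial estimate, so I do not expect a genuine obstacle; the only subtlety is that one must retain the dependence on $\ell'$ (encoded by $a$) rather than bounding each binomial factor by a constant, since $\ell'$ is unbounded — the inequality is driven by the hypothesis $\ell'\geq r$, not merely by $r\geq 3$.
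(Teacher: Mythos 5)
Your proof is correct, and every identity checks out: the double Pascal collapse $\binom{2\ell'}{r}+2\binom{2\ell'}{r-1}+\binom{2\ell'}{r-2}=\binom{2\ell'+2}{r}$, the absorption identity $2\ell'\binom{2\ell'-1}{r-1}=r\binom{2\ell'}{r}$, the ratio computation, and the final polynomial estimate $(r-1)a^2+(r-3)a-(r^2+r+2)\geq r^3-r^2-4r-2>0$ for $a\geq r\geq 3$ are all valid, and strictness is preserved throughout. Your route differs from the paper's: the paper never combines the right-hand side into a single binomial; instead it writes each of the three terms as an explicit multiple of the left-hand side, namely $\frac{1}{2\ell'}\binom{2\ell'}{r}=\frac{1}{r}\binom{2\ell'-1}{r-1}$, $\frac{2}{2\ell'}\binom{2\ell'}{r-1}=\frac{2}{2\ell'-r+1}\binom{2\ell'-1}{r-1}$, $\frac{1}{2\ell'}\binom{2\ell'}{r-2}=\frac{r-1}{(2\ell'-r+1)(2\ell'-r+2)}\binom{2\ell'-1}{r-1}$, and then shows the resulting coefficient $1-\frac{1}{r}-\frac{2}{2\ell'-r+1}-\frac{r-1}{(2\ell'-r+1)(2\ell'-r+2)}$ is positive by using $2\ell'\geq 2r$ to replace $2\ell'-r+1,\,2\ell'-r+2$ by $r+1,\,r+2$ and evaluating at $r=3$. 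The paper's version keeps the three terms separate and bounds ratios termwise, which is slightly shorter; your version trades that for a cleaner structural statement ($r\binom{2\ell'}{r}>\binom{2\ell'+2}{r}$) at the cost of an explicit polynomial expansion, and your closing remark is apt: in both arguments the hypothesis $\ell'\geq r$ (not just $r\geq 3$) is what drives the inequality, since the unbounded parameter must be retained.
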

 \begin{proof}
Note that $\ell'\geq r$. Thus we have
     \begin{equation*}
    \begin{aligned}
        &\binom{2\ell'-1}{r-1}-\left(\frac{1}{2\ell'}\binom{2\ell'}{r}+\frac{2}{2\ell'}\binom{2\ell'}{r-1}+\frac{1}{2\ell'}\binom{2\ell'}{r-2}\right)\\
        &=\binom{2\ell'-1}{r-1}\left(1-\frac{1}{r}-\frac{2}{2\ell'-r+1}-\frac{r-1}{(2\ell'-r+1)(2\ell'-r+2)}\right)\\
        &\geq \binom{2\ell'-1}{r-1}\left(1-\frac{1}{r}-\frac{2}{r+1}-\frac{r-1}{(r+1)(r+2)}\right)\\
        &\geq \binom{2\ell'-1}{r-1}\left(1-\frac{1}{3}-\frac{2}{4}-\frac{3-1}{(3+1)(3+2)}\right)>0.
    \end{aligned}
\end{equation*}
 \end{proof}
 \begin{lemma}\label{lem: the ineuqlaity in sec 4 (1)}
 For integers $r,k,\ell$ with $k\geq 2$ and $\ell\geq r\geq 3$, we have
     $$\binom{k\ell-1}{r-1}-\frac{1}{2}\binom{k\ell-1}{r-2}\geq \binom{(k-1)\ell}{r-1}+1.$$
 \end{lemma}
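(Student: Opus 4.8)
The plan is to rewrite the claim as a lower bound on a telescoping sum of binomial coefficients. Put $a=k\ell-1$ and $b=(k-1)\ell$, so that $a-b=\ell-1\ge r-1\ge 2$ and, since $k\ge 2$ and $\ell\ge r$, also $a=k\ell-1\ge 2\ell-1\ge 2r-1$. The asserted inequality is equivalent to
$$\binom{a}{r-1}-\binom{b}{r-1}\ \ge\ \frac{1}{2}\binom{a}{r-2}+1 .$$

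First I would expand the left-hand side by Pascal's rule. Since $b\ge \ell\ge r$, iterating $\binom{j+1}{r-1}=\binom{j}{r-1}+\binom{j}{r-2}$ telescopes to
$$\binom{a}{r-1}-\binom{b}{r-1}=\sum_{j=b}^{a-1}\binom{j}{r-2},$$
a sum of $a-b\ge 2$ strictly positive terms (each index satisfies $j\ge b\ge r>r-2$). Next I would peel off the top term: a one-line ratio computation gives $\binom{a-1}{r-2}\big/\binom{a}{r-2}=(a-r+2)/a$, which is $\ge \frac{1}{2}$ exactly when $a\ge 2r-4$, and this holds because $a\ge 2r-1$. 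Hence $\binom{a-1}{r-2}\ge \frac{1}{2}\binom{a}{r-2}$. The remaining terms $\sum_{j=b}^{a-2}\binom{j}{r-2}$ form a nonempty sum (as $a-2\ge b$, equivalently $\ell\ge 3$) of positive integers, so they contribute at least $1$. Adding these two bounds yields the displayed inequality, and rearranging gives the lemma.

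I do not anticipate a genuine obstacle here; the argument is elementary. The only points to watch are that all the binomial coefficients stay in the valid range for Pascal's identity (ensured by $b\ge \ell\ge r$) and that the numeric threshold $a\ge 2r-4$ is implied by the hypotheses $k\ge 2$, $\ell\ge r\ge 3$ — both are immediate.
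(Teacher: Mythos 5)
Your proof is correct and follows essentially the same route as the paper: both peel the term $\binom{k\ell-2}{r-2}$ out of $\binom{k\ell-1}{r-1}-\binom{(k-1)\ell}{r-1}$ via Pascal's rule and compare it with $\frac{1}{2}\binom{k\ell-1}{r-2}$. The only cosmetic difference is that you obtain the extra $+1$ from the remaining terms of the telescoping sum and verify the half-bound by the exact ratio $(a-r+2)/a\ge\frac{1}{2}$, whereas the paper gets both at once from a single factorial estimate showing $2\binom{k\ell-2}{r-2}-\binom{k\ell-1}{r-2}-2>0$.
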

 \begin{proof}
    Note that $\binom{k\ell-1}{r-1}-\binom{(k-1)\ell}{r-1}\geq \binom{k\ell-2}{r-2}$. Since
     \begin{equation*}
         \begin{aligned}
             &2\binom{k\ell-2}{r-2}-\binom{k\ell-1}{r-2}-2\\
             \geq & 2~\frac{(k\ell-2)\dots(k\ell-r+1)}{(r-2)!}-\frac{(k\ell-1)\dots(k\ell-r+2)}{(r-2)!}-2\\
             \geq &\frac{(k\ell-2)\dots(k\ell-r+2)}{(r-2)!}\left( k\ell-2r+3-\frac{2(r-2)!}{(k\ell-2)\dots(k\ell-r+2)}\right)>0,
         \end{aligned}
     \end{equation*}the result holds.
     The last inequality holds because $\frac{2(r-2)!}{(k\ell-2)\dots(k\ell-r+2)}\leq 2$ and $k\ell\geq 2r$ when $k\geq 2$ and $\ell\geq r$. %Thus, the lemma holds.
 \end{proof}
   \begin{lemma}\label{lem: the ineuqlaity in sec 3 (2)}
 For integers $r,k,\ell$ with $k\geq 3$ and $\ell\geq r\geq 3$, we have
     $$\frac{1}{2}\sum_{t=1}^{r-2}\binom{(k-1)\ell-1}{r-t-1}-\ell+\binom{\ell-1}{r-2}>0.$$
 \end{lemma}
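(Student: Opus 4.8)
The plan is to reduce the claimed inequality to two elementary bounds after re-indexing the sum. Substituting $s = r-t-1$ turns $\sum_{t=1}^{r-2}\binom{(k-1)\ell-1}{r-t-1}$ into $\sum_{s=1}^{r-2}\binom{(k-1)\ell-1}{s}$. Since $r\ge 3$ this sum is nonempty and every term is positive, so it is at least its $s=1$ term, namely $\binom{(k-1)\ell-1}{1}=(k-1)\ell-1$. Using $k\ge 3$ and $\ell\ge r\ge 3$ gives $(k-1)\ell-1\ge 2\cdot 3-1=5$, hence $\tfrac12\sum_{t=1}^{r-2}\binom{(k-1)\ell-1}{r-t-1}\ge \tfrac52$.

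Next I would bound $\binom{\ell-1}{r-2}$ from below by $\ell-1$, so that it compensates the $-\ell$ term up to an additive constant. As $r$ ranges over $\{3,\dots,\ell\}$ the lower index $r-2$ ranges over $\{1,\dots,\ell-2\}$; since $\binom{\ell-1}{j}$ is unimodal in $j$ and $\binom{\ell-1}{1}=\binom{\ell-1}{\ell-2}=\ell-1$, we get $\binom{\ell-1}{j}\ge \ell-1$ for every $j$ in that range. Concretely: if $j\le (\ell-1)/2$ then $\binom{\ell-1}{j}\ge\binom{\ell-1}{1}=\ell-1$ by the increasing part of the row, and if $j\ge(\ell-1)/2$ then $\binom{\ell-1}{j}\ge\binom{\ell-1}{\ell-2}=\ell-1$ by the decreasing part (here $j=r-2\le \ell-2=(\ell-1)-1$). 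Therefore $-\ell+\binom{\ell-1}{r-2}\ge -1$.

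Combining the two estimates, the left-hand side of the claimed inequality is at least $\tfrac52 + (-1) = \tfrac32 > 0$, which finishes the proof. I do not anticipate any genuine obstacle: the only step needing a sentence of justification is the unimodality estimate for $\binom{\ell-1}{r-2}$, and everything else is a direct substitution together with the crude bounds $(k-1)\ell-1\ge 5$ and $\binom{\ell-1}{r-2}\ge \ell-1$.
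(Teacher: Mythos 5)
Your proof is correct. The re-indexing is harmless, the bound $\tfrac12\sum_{s=1}^{r-2}\binom{(k-1)\ell-1}{s}\ge\tfrac12\bigl((k-1)\ell-1\bigr)\ge\tfrac52$ uses only $k\ge3$, $\ell\ge3$, and the estimate $\binom{\ell-1}{r-2}\ge\ell-1$ is valid because $1\le r-2\le\ell-2$ under $3\le r\le\ell$, so unimodality of the row $\binom{\ell-1}{\cdot}$ gives at least the value at the endpoints $j=1$ and $j=\ell-2$. The route differs from the paper's: the paper splits into three cases ($r=3$; $r\ge4$ with $\ell=r$; $r\ge4$ with $\ell\ge r+1$), and in the last case it discards the sum entirely, instead using the stronger monotonicity bound $\binom{\ell-1}{r-2}\ge\binom{\ell-1}{2}$ to get $\binom{\ell-1}{2}-\ell=\tfrac{\ell^2-5\ell+2}{2}>0$, whereas in the first two cases it argues exactly as you do (sum term beats a deficit of $1$). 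Your observation that $\binom{\ell-1}{r-2}\ge\ell-1$ holds uniformly lets you keep both terms in play throughout and dispense with the case analysis, giving a shorter and more uniform argument; the paper's case split buys nothing extra here beyond slightly larger margins in the third case.
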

 \begin{proof}
     When $r=3$, we have
     \begin{equation*}
         \begin{aligned}
             &\frac{1}{2}\sum_{t=1}^{r-2}\binom{(k-1)\ell-1}{r-t-1}-\ell+\binom{\ell-1}{r-2}=\frac{1}{2}\binom{(k-1)\ell-1}{1}-1\\
             &\geq \frac{1}{2}\binom{2\ell-1}{1}-1
             =\ell-\frac{3}{2}>0.
         \end{aligned}
     \end{equation*}
     When $r\geq 4$ and $\ell=r$, we have
     \begin{equation*}
         \begin{aligned}
             &\frac{1}{2}\sum_{t=1}^{r-2}\binom{(k-1)\ell-1}{r-t-1}-\ell+\binom{\ell-1}{r-2}=\frac{1}{2}\sum_{t=1}^{r-2}\binom{(k-1)\ell-1}{r-t-1}-1\\
             &>\frac{1}{2}\binom{2\ell-1}{1}-1
             =\ell-\frac{3}{2}>0.
         \end{aligned}
     \end{equation*}
     When $r\geq 4$ and $\ell\geq r+1\ge 5$, by the monotonicity of composite numbers, we have
     \begin{equation*}
         \begin{aligned}
             &\frac{1}{2}\sum_{t=1}^{r-2}\binom{(k-1)\ell-1}{r-t-1}-\ell+\binom{\ell-1}{r-2}>\binom{\ell-1}{r-2}-\ell\\
             &\geq \binom{\ell-1}{2}-\ell
             =\frac{\ell^2-5\ell+2}{2}>0.
        \end{aligned}
    \end{equation*}
 \end{proof}
 \iffalse
 \begin{proof}
     Since $\binom{\ell}{t}\geq 1$ holds when $0\leq t\leq r-2$, we have
     \begin{equation*}
         \begin{aligned}
             \binom{k\ell-1}{r-1}&= \sum_{t=0}^{r-1}\binom{(k-1)\ell-1}{r-t-1}\binom{\ell}{t}\\
             &> \sum_{t=0}^{r-2}\binom{(k-1)\ell-1}{r-t-1}+(\ell-1)\binom{(k-1)\ell-1}{r-2}
         \end{aligned}
     \end{equation*}
     Notice that when $k\geq 3$ and $1\leq t\leq r-2$, we have $(k-1)\ell-t\geq 2(\ell-t-1)$,
     Thus
     \begin{equation*}
     \begin{aligned}
     (\ell-1)\binom{(k-1)\ell-1}{r-2}&= (\ell-1)\frac{\left((k-1)\ell-1\right)\ldots\left((k-1)\ell-r+2\right)}{(r-2)!}\\ &\geq 2^{r-2}(r-1)\frac{(\ell-1)(\ell-2)\ldots(\ell-r+1)}{(r-1)!}\\
     &\geq 4\binom{\ell-1}{r-1}.
     \end{aligned}
     \end{equation*}
     Above all we have
     \begin{equation*}
         \begin{aligned}
             \binom{k\ell-1}{r-1}-\sum_{t=0}^{r-2}\binom{(k-1)\ell-1}{r-t-1}-\ell-\binom{\ell-1}{r-1}&>\frac{3}{4}(\ell-1)\binom{(k-1)\ell-1}{r-2}-\ell\\
             &\geq \frac{3}{2}\binom{2\ell-1}{r-2}-\ell>0.
         \end{aligned}
     \end{equation*}
 \end{proof}
 \fi
 \begin{lemma}\label{lem: the ineuqlaity in sec 3 (3)}
 For integers $r,k,\ell$ with $k\geq 2$ and $\ell\geq r\geq 3$, we have
 $$\binom{k\ell-1}{r-1}>\binom{(k-1)\ell-1}{r-1}+\binom{k\ell-1}{r-2}.$$
  \end{lemma}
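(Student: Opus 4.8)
The plan is to first rewrite the inequality in the equivalent form
$$\binom{k\ell-1}{r-1}-\binom{k\ell-1}{r-2}>\binom{(k-1)\ell-1}{r-1},$$
and then evaluate the left-hand side in closed form. Applying the elementary identity $\binom{n}{j}=\tfrac{n-j+1}{j}\binom{n}{j-1}$ with $n=k\ell-1$ and $j=r-1$ gives $\binom{k\ell-1}{r-1}-\binom{k\ell-1}{r-2}=\tfrac{k\ell-2r+2}{r-1}\binom{k\ell-1}{r-2}$. Since $k\ge 2$ and $\ell\ge r$ force $k\ell\ge 2r$, we have $k\ell-2r+2\ge 2>0$ and $\binom{k\ell-1}{r-2}>0$, so the target reduces to
$$(k\ell-2r+2)\binom{k\ell-1}{r-2}>(r-1)\binom{(k-1)\ell-1}{r-1}.$$

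The key step I would then carry out is to establish the auxiliary bound
$$\bigl((k-1)\ell-r+1\bigr)\binom{k\ell-1}{r-2}\ \ge\ (r-1)\binom{(k-1)\ell-1}{r-1}.$$
Granting this, the target follows immediately, because $\ell\ge r$ gives $(k-1)\ell-r+1<k\ell-2r+2$, so multiplying up turns the $\ge$ into a strict $>$. To prove the auxiliary bound I would expand the ratio $\binom{k\ell-1}{r-2}/\binom{(k-1)\ell-1}{r-1}$ using factorials and simplify it to
$$(r-1)\cdot\frac{\prod_{j=(k-1)\ell}^{k\ell-1}j}{\prod_{j=(k-1)\ell-r+1}^{k\ell-r+1}j}.$$
Pulling the smallest factor $(k-1)\ell-r+1$ out of the denominator leaves a quotient of two products of $\ell$ consecutive positive integers, in which, term by term, each numerator factor exceeds the matching denominator factor by exactly $r-2\ge 1$; hence that quotient is at least $1$, and the auxiliary bound drops out.

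I do not anticipate a genuine obstacle here: once the reduction above is set up, the remainder is a single exact manipulation of binomial coefficients. The one thing to be careful about is the choice of estimate. Naive routes---for instance bounding $\binom{(k-1)\ell-1}{r-1}\le\binom{k\ell-r-1}{r-1}$ (valid since $\ell\ge r$) and then comparing $\binom{k\ell-r-1}{r-1}$ with $\binom{k\ell-1}{r-1}$ through crude inequalities---are not tight enough when $r$ is moderately large and $\ell$ is close to $r$, e.g.\ $k=2$, $\ell=r=9$, where the gap between the two sides is comparatively small. Keeping the comparison in the exact ``ratio of products of consecutive integers'' form avoids this and works uniformly under the stated hypotheses $k\ge 2$, $\ell\ge r\ge 3$.
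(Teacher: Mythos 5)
Your proof is correct, but it takes a genuinely different route from the paper's. Both arguments share the same opening reduction: move $\binom{k\ell-1}{r-2}$ to the left and use $\binom{k\ell-1}{r-1}=\frac{k\ell-r+1}{r-1}\binom{k\ell-1}{r-2}$ to rewrite the left side as $\frac{k\ell-2r+2}{r-1}\binom{k\ell-1}{r-2}$. From there the paper proceeds by induction on $r$: the case $r=3$ is checked by an explicit quadratic computation (whose displayed formula, incidentally, contains a small typo), and for $4\le r\le\ell$ the induction hypothesis $\binom{k\ell-1}{r-2}>\binom{(k-1)\ell-1}{r-2}+\binom{k\ell-1}{r-3}$ is substituted in, after which the identity $\binom{(k-1)\ell-1}{r-1}=\frac{(k-1)\ell-r+1}{r-1}\binom{(k-1)\ell-1}{r-2}$ makes the remainder visibly positive. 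You instead finish in one pass, with no induction and no base case, by comparing the two binomial coefficients through the exact factorial ratio
$$\frac{\binom{k\ell-1}{r-2}}{\binom{(k-1)\ell-1}{r-1}}=\frac{r-1}{(k-1)\ell-r+1}\cdot\frac{\prod_{j=(k-1)\ell}^{k\ell-1}j}{\prod_{j=(k-1)\ell-r+2}^{k\ell-r+1}j}\;\ge\;\frac{r-1}{(k-1)\ell-r+1},$$
the last step holding because both products have exactly $\ell$ factors, all positive under $k\ge2$, $\ell\ge r\ge3$, and each numerator factor exceeds its partner by $r-2\ge1$. This gives your auxiliary bound $\bigl((k-1)\ell-r+1\bigr)\binom{k\ell-1}{r-2}\ge(r-1)\binom{(k-1)\ell-1}{r-1}$, and since $\ell\ge r$ forces $k\ell-2r+2>(k-1)\ell-r+1$ while $\binom{k\ell-1}{r-2}>0$, the strict inequality follows; I verified the bookkeeping (numbers of factors, positivity of the pulled-out factor, preservation of strictness) and it all checks out. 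What each approach buys: yours is self-contained and uniform in $r$, trading the paper's Pascal-type recursion for a single factorial manipulation, while the paper's induction keeps every step at the level of elementary binomial identities at the cost of a separate base case and the recursive bookkeeping.
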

\begin{proof}
 When $k\geq 2$ and $r=3$, we have
     \begin{equation*}
        \begin{aligned}
            &\binom{k\ell-1}{3-1}-\binom{(k-1)\ell-1}{3-1}-\binom{k\ell-1}{3-1}=\frac{1}{2}\left((2k-1)\ell^2-(k+3)\ell+1\right)\\
            &\geq \frac{1}{2}\left(9(2k-1)-3(k+3)+1\right)
            =\frac{1}{2}(15k-17)>0
        \end{aligned}
    \end{equation*}
    and the result holds. Assume $4\leq r\leq \ell$. %Note that by induction
   % $$\binom{k\ell-1}{r-2}>\binom{(k-1)\ell-1}{r-2}+\binom{k\ell-1}{r-3}$$
    Then
    \begin{equation*}
        \begin{aligned}
            &\binom{k\ell-1}{r-1}-\binom{(k-1)\ell-1}{r-1}-\binom{k\ell-1}{r-2}%&=\frac{k\ell-r+1}{r-1}\binom{k\ell-1}{r-2}-\binom{(k-1)\ell-1}{r-1}-\binom{k\ell-1}{r-2}\\
            =\frac{k\ell-2r+2}{r-1}\binom{k\ell-1}{r-2}-\binom{(k-1)\ell-1}{r-1}\\
            &>\frac{k\ell-2r+2}{r-1}\left(\binom{(k-1)\ell-1}{r-2}+\binom{k\ell-1}{r-3}\right)-\binom{(k-1)\ell-1}{r-1}\\
            &=\frac{k\ell-2r+2}{r-1}\left(\binom{(k-1)\ell-1}{r-2}+\binom{k\ell-1}{r-3}\right)
            -\frac{(k-1)\ell-r+1}{r-1}\binom{(k-1)\ell-1}{r-2}\\
            &=\frac{\ell-r+1}{r-1}\binom{(k-1)\ell-1}{r-2}+\frac{k\ell-2r+2}{r-1}\binom{k\ell-1}{r-3}>0.
        \end{aligned}
    \end{equation*}The first inequality holds by induction hypothesis. So we finished the proof.
\end{proof}

 \begin{lemma}\label{last}
When $k\geq 2$, $\ell\geq 5$ and $\ell'\geq  r\geq 3$, we have
$$\max\left\{\binom{k\ell'-1}{r-1}-\frac{1}{2}\binom{k\ell'-1}{r-2}+\frac{1}{2},\frac{1}{\ell}\binom{\ell}{r}+\frac{5}{2}\right\}<\binom{k\ell'-1}{r-1},$$ where $\ell'=\lchuto$.
\end{lemma}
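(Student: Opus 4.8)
The plan is to treat the two quantities inside the maximum separately, each of which reduces to a short elementary estimate.

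For the first quantity, observe that $\binom{k\ell'-1}{r-1}-\frac12\binom{k\ell'-1}{r-2}+\frac12<\binom{k\ell'-1}{r-1}$ is equivalent to $\binom{k\ell'-1}{r-2}>1$. Since $k\geq 2$ and $\ell'\geq r\geq 3$, we have $k\ell'-1\geq 2\ell'-1\geq 2r-1\geq 5$, while the lower index satisfies $r-2\geq 1$; hence $\binom{k\ell'-1}{r-2}\geq\binom{k\ell'-1}{1}=k\ell'-1\geq 5>1$, which settles this part.

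For the second quantity, the key identity is $\frac1\ell\binom{\ell}{r}=\frac1r\binom{\ell-1}{r-1}$. From $\ell'=\lchuto$ one gets $\ell\le 2\ell'$ (indeed $\ell=2\ell'$ when $\ell$ is even and $\ell=2\ell'-1$ when $\ell$ is odd), so monotonicity of the binomial coefficient in its upper argument gives $\binom{\ell-1}{r-1}\le\binom{2\ell'-1}{r-1}\le\binom{k\ell'-1}{r-1}$, the last inequality using $k\geq 2$; combined with $r\geq 3$ this yields $\frac1\ell\binom{\ell}{r}\le\frac13\binom{k\ell'-1}{r-1}$. It therefore suffices to prove $\frac13\binom{k\ell'-1}{r-1}+\frac52<\binom{k\ell'-1}{r-1}$, i.e. $\binom{k\ell'-1}{r-1}>\frac{15}{4}$, and this is immediate because $\binom{k\ell'-1}{r-1}\ge\binom{2\ell'-1}{r-1}\ge\binom{2r-1}{r-1}=\binom{2r-1}{r}\ge\binom{5}{2}=10$.

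The argument is routine and I do not anticipate a real obstacle. The only points deserving care are: keeping track of the relation $\ell\le 2\ell'$ in both parity cases so that the reduction $\frac1\ell\binom{\ell}{r}\le\frac13\binom{k\ell'-1}{r-1}$ is valid throughout, and checking that each binomial coefficient invoked has its upper index at least its lower index — guaranteed by $\ell'\geq r$ together with $k\geq 2$ — so that all the monotonicity steps legitimately apply.
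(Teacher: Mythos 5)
Your proof is correct and follows essentially the same route as the paper: the first term reduces to $\binom{k\ell'-1}{r-2}>1$, and the second uses the identity $\frac{1}{\ell}\binom{\ell}{r}=\frac{1}{r}\binom{\ell-1}{r-1}$ together with $\ell-1\le 2\ell'-1\le k\ell'-1$ and $r\ge 3$, exactly as in the paper. The only difference is cosmetic: the paper finishes with $\frac{2}{3}\binom{\ell-1}{2}-\frac{5}{2}>0$, while you finish with $\binom{k\ell'-1}{r-1}\ge 10>\frac{15}{4}$, which is the same elementary estimate rearranged.
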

\begin{proof}
Since $\binom{k\ell'-1}{r-2}>1$, we have $\binom{k\ell'-1}{r-1}-\frac{1}{2}\binom{k\ell'-1}{r-2}+\frac{1}{2}<\binom{k\ell'-1}{r-1}$. Note that $\ell\geq 5$. Then
\begin{equation*}
    \begin{aligned}
      &\binom{k\ell'-1}{r-1}-\frac{1}{\ell}\binom{\ell}{r}-\frac{5}{2}\geq  \binom{\ell-1}{r-1}-\frac{1}{\ell}\binom{\ell}{r}-\frac{5}{2}\\
      &=(1-\frac{1}{r})\binom{\ell-1}{r-1}-\frac{5}{2}
      \geq \frac{2}{3}\binom{\ell-1}{2}-\frac{5}{2}>0.
    \end{aligned}
\end{equation*}
\end{proof}

\section{Proof of Theorem \ref{thm: berge linear forest} when $k=2$}

In this section we prove  Theorem \ref{thm: berge linear forest} when $k=2$. When $\ell\geq r+6$ is odd and $n$ is sufficiently large, by Theorem \ref{thm: two berge path}, the result holds. % implies that
%$$ex_{r}(n,\ber 2P_{\ell})=\binom{2\ell'-1}{r-1}(n-2\ell'+1)+\binom{2\ell'-1}{r},$$
% which meets the case when $\ell$ is odd in Theorem \ref{thm : k=2 upper bound}.
 So we just need to consider the case when $\ell$ is even. Then $\ell=2\ell'$. Let $\mathcal{H}$ be a $\ber 2P_\ell$-free $r$-graph of order $n$  and size $\mathrm{ex}_{r}(n,\ber 2P_{\ell})$. By (\ref{3-1}), we have
  \begin{equation}\label{eq-0}
 |\mathcal{H}|\ge |\mathcal{H}_1(n,k,\ell)|=\binom{2\ell'-1}{r-1}(n-2\ell'+1)+\binom{2\ell'-1}{r}+\binom{2\ell'-1}{r-2}.\end{equation} Keep in mind that we always suppose $n$ is large enough. Let $\mathcal{H}'$ be the $r$-graph  obtained from $\mathcal{H}$ by repeatedly removing vertices of degree at most $\binom{2\ell'-1}{r-1}-1$ and all hyperedges incident to them. By $\mathcal{H}$ being  $\ber 2P_\ell$-free, the length of any $\ber$path in $\mh'$ is at most $4\ell'$. Let  the longest $\ber$path in $\mh'$ be a $\ber P_{\ell_0}$ and  denote the  component contained the $\ber P_{\ell_0}$ as $\mh_{1}'$. Then $\ell_{0}\leq 2\ell=4\ell'$. Let   $\mh_{2}'=\mh[V(\mh')\setminus V(\mh_{1}')]$.
 \begin{lemma}\label{2-01} We have $\ell_{0}\ge \ell$.
 \end{lemma}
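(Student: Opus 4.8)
The plan is to show that $\mathcal{H}'$ cannot be $\ber P_{\ell}$-free. Since $\ell_0$ is, by definition, the length of a longest Berge path in $\mathcal{H}'$, and any Berge path of length at least $\ell$ contains a $\ber P_{\ell}$, this is exactly the claim $\ell_0\ge\ell$. So I would assume, for contradiction, that $\mathcal{H}'$ is $\ber P_{\ell}$-free and derive a contradiction from a coarse edge count that plays the global density of $\mathcal{H}$ against what a $\ber P_{\ell}$-free hypergraph can carry.

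Set $D:=\binom{2\ell'-1}{r-1}$. First I would record two bounds. On the one hand, $(\ref{eq-0})$ gives $|\mathcal{H}|\ge Dn-c$ for a constant $c=c(r,\ell)$ (concretely $c=D(2\ell'-1)-\binom{2\ell'-1}{r}-\binom{2\ell'-1}{r-2}$). On the other hand, let $k$ be the number of vertices deleted in passing from $\mathcal{H}$ to $\mathcal{H}'$, so $0\le k\le n$ and $|V(\mathcal{H}')|=n-k$; since every deleted vertex had degree at most $D-1$ at the moment it was removed, at most $k(D-1)$ hyperedges are destroyed in total, whence $|\mathcal{H}'|\ge Dn-c-k(D-1)$.

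Now comes the point. Because $\ell=2\ell'\ge r+7>r+1>3$, Theorem~\ref{thm: turan of BPl}(1) applies to the $\ber P_{\ell}$-free hypergraph $\mathcal{H}'$ and yields $|\mathcal{H}'|\le\frac{n-k}{\ell}\binom{\ell}{r}=\frac{(n-k)D}{r}$, using the identity $\frac{1}{2\ell'}\binom{2\ell'}{r}=\frac{1}{r}\binom{2\ell'-1}{r-1}$. Comparing this with $|\mathcal{H}'|\ge Dn-c-k(D-1)$ and rearranging gives $nD(r-1)-rc\le k\bigl(D(r-1)-r\bigr)$. Since $D\ge 2\ell'-1\ge r$, the bracket satisfies $D(r-1)-r\ge r(r-2)>0$, so dividing through forces $k\ge n+\frac{r(n-c)}{D(r-1)-r}>n$ once $n$ is large, contradicting $k\le n$. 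Hence $\mathcal{H}'$ is not $\ber P_{\ell}$-free, i.e.\ $\ell_0\ge\ell$.

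I do not foresee a genuine obstacle here: the whole argument rests on the numerical slack that a $\ber P_{2\ell'}$-free $r$-graph on $m$ vertices has at most $\frac{mD}{r}$ hyperedges while $\mathcal{H}$ already carries about $Dn$ of them, and the cleaning step destroys at most $D-1$ hyperedges per deleted vertex — a rate far too slow to bridge the factor-$r$ gap no matter how many vertices are deleted. The only part warranting care is the honest bookkeeping of deleted vertices against destroyed hyperedges; everything else is elementary arithmetic.
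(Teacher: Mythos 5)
Your proposal is correct and follows essentially the same route as the paper: assume $\mathcal{H}'$ is $\ber P_{\ell}$-free, apply Theorem~\ref{thm: turan of BPl}(1) to it, charge at most $\binom{2\ell'-1}{r-1}-1$ destroyed hyperedges to each deleted vertex, and contradict the lower bound (\ref{eq-0}) for large $n$. The only difference is cosmetic bookkeeping (you lower-bound $|\mathcal{H}'|$ and exploit the identity $\frac{1}{2\ell'}\binom{2\ell'}{r}=\frac{1}{r}\binom{2\ell'-1}{r-1}$, whereas the paper upper-bounds $|\mathcal{H}|$ directly), so the arguments are essentially identical.
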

 \begin{proof} Suppose $\ell_0<\ell$. Then $\mh'$ is $\ber P_{\ell}$-free. By Theorem \ref{thm: turan of BPl} (1) and considering the deleted hyperedges, we have
    \begin{align*}
    |\mathcal{H}|&\le \frac{|V(\mh')|}{2\ell'}{2\ell'\choose r}+\left(\binom{2\ell'-1}{r-1}-1\right)(n-|V(\mh')|)\\
    &\leq \max\left\{\frac{1}{2\ell'}{2\ell'\choose r},\left(\binom{2\ell'-1}{r-1}-1\right)\right\}n\\
    &=\left(\binom{2\ell'-1}{r-1}-1\right)n,
    \end{align*}a contradiction with (\ref{eq-0}) when $n$ is large enough.
 \end{proof}
\begin{lemma}\label{2-11} $|V(\mh_{1}')|> \max\{N_{4\ell'+1,r},N_{4\ell',r},N_{4\ell'-1,r}\}$, where $N_{4\ell'+1,r},N_{4\ell',r}$ and $N_{4\ell'-1,r}$ are described as Theorem \ref{thm: connect Pl}.
 \end{lemma}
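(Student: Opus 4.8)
The plan is to argue by contradiction. Suppose $|V(\mh_{1}')|\le M$, where $M:=\max\{N_{4\ell'+1,r},N_{4\ell',r},N_{4\ell'-1,r}\}$ is a constant depending only on $\ell$ and $r$, and write $D:=\binom{2\ell'-1}{r-1}$, $N_1:=|V(\mh_{1}')|$, $N_2:=|V(\mh_{2}')|$. I would bound $|E(\mh')|$ from below using the near-extremality of $\mh$, bound it from above using that $\mh'$ splits into the two parts $\mh_{1}'$ and $\mh_{2}'$, and then compare the two bounds to force $n$ to be bounded by a constant.

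\emph{Lower bound.} Recall $\mh'$ is obtained from $\mh$ by repeatedly deleting vertices of current degree at most $D-1$ together with incident hyperedges and the resulting isolated vertices (deleting isolated vertices destroys no hyperedge). Exactly $n-N_1-N_2$ vertices are removed in total, so at most $(n-N_1-N_2)(D-1)$ hyperedges vanish; combining this with the lower bound $(\ref{eq-0})$ on $|\mh|$ and simplifying gives
$$|E(\mh')|\ \ge\ n+(D-1)(N_1+N_2)-C_1 ,$$
where $C_1:=D(2\ell'-1)-\binom{2\ell'-1}{r}-\binom{2\ell'-1}{r-2}$ is a constant.

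\emph{Upper bound.} By Lemma \ref{2-01}, $\mh_{1}'$ contains a $\ber P_{\ell_0}$ with $\ell_0\ge\ell$, hence also a $\ber P_{\ell}$ (keep the first $\ell$ defining hyperedges and the first $\ell+1$ defining vertices). Since $V(\mh_{1}')\cap V(\mh_{2}')=\emptyset$, a $\ber P_\ell$ lying in $\mh_{2}'$ would be vertex-disjoint, hence hyperedge-disjoint, from this one, and the two together would form a $\ber 2P_\ell$ inside $\mh'\subseteq\mh$ --- impossible. So $\mh_{2}'$ is $\ber P_\ell$-free, and since $\ell=2\ell'\ge r+7>r+1>3$, Theorem \ref{thm: turan of BPl} (1) gives $|E(\mh_{2}')|\le\frac{N_2}{2\ell'}\binom{2\ell'}{r}=\frac{N_2 D}{r}$, using $\binom{2\ell'}{r}=\frac{2\ell'}{r}\binom{2\ell'-1}{r-1}$. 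As trivially $|E(\mh_{1}')|\le\binom{N_1}{r}$, we obtain $|E(\mh')|\le\binom{N_1}{r}+\frac{N_2 D}{r}$.

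\emph{Conclusion.} Comparing the two estimates and cancelling the common term $(D-1)N_2$ leaves
$$n+(D-1)N_1-C_1\ \le\ \binom{N_1}{r}+N_2\Big(\tfrac{D}{r}-(D-1)\Big)\ \le\ \binom{N_1}{r},$$
the last inequality because $\tfrac{D}{r}-(D-1)=\tfrac{r-(r-1)D}{r}\le 0$ (clear since $D\ge\binom{2r-1}{r-1}\ge 10$ and $r\ge 3$; one may also invoke Lemma \ref{lemma: first}). Hence $n\le C_1+\binom{N_1}{r}\le C_1+\binom{M}{r}$, a fixed constant, which is false once $n$ is large enough. Therefore $|V(\mh_{1}')|>M$, as claimed.

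The main obstacle --- really the only delicate point --- is that the inequalities must point the right way in this final cancellation: the per-vertex gain $D-1$ extracted from the near-extremality of $\mh$ (through the deletion count) must dominate the per-vertex loss $\frac{1}{2\ell'}\binom{2\ell'}{r}=\frac{D}{r}$ produced by the Erd\H{o}s--Gallai-type bound on the $\ber P_\ell$-free piece $\mh_{2}'$. This is exactly why one must keep the full term $+(D-1)(N_1+N_2)$ in the lower bound on $|E(\mh')|$: with the cruder estimate $|E(\mh')|\ge|E(\mh)|-n(D-1)$ the $N_2$-contributions no longer cancel and the argument collapses, since $\frac{D}{r}>1$ and $\frac{N_2 D}{r}$ alone can exceed $n$. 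Everything else --- evaluating $C_1$, checking $r-(r-1)D\le 0$, and verifying that $\mh_{2}'$ is $\ber P_\ell$-free --- is routine.
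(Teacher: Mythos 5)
Your proof is correct and takes essentially the same route as the paper: both argue by contradiction, charge at most $\binom{2\ell'-1}{r-1}-1$ lost hyperedges to each vertex deleted in forming $\mh'$, bound $|\mh_{1}'|$ by a constant under the assumption $|V(\mh_{1}')|\le \max\{N_{4\ell'+1,r},N_{4\ell',r},N_{4\ell'-1,r}\}$, bound $|\mh_{2}'|$ via Theorem \ref{thm: turan of BPl} (1) after deducing $\ber P_\ell$-freeness from Lemma \ref{2-01}, and contradict (\ref{eq-0}) for large $n$. The only difference is presentational: you compare a lower and an upper bound on $|E(\mh')|$, while the paper writes the same estimates directly as an upper bound on $|\mh|$.
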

\begin{proof} Let $s=\max\{N_{4\ell'+1,r},N_{4\ell',r},N_{4\ell'-1,r}\}$.
 By Lemma \ref{2-01} and  $\mathcal{H}$ being  $\ber 2P_\ell$-free, $\mh_{2}'$ is $\ber P_{\ell}$-free. By Theorems \ref{thm: turan of BPl} (1), we have
     \begin{equation*}
        \begin{aligned}
            |\mh_{2}'|&\leq \mathrm{ex}_r(|V(\mh_{2}')|,\ber P_{\ell})=\mathrm{ex}_r(|V(\mh_{2}')|,\ber P_{2\ell'})\\
           &\leq \frac{|V(\mh_{2}')|}{2\ell'}\binom{2\ell'}{r}.
        \end{aligned}
    \end{equation*}Suppose $|V(\mh_{1}')|\leq s$. Then $|\mh_{1}'|\leq \binom{s}{r}$. So for $n$ large enough, we have
    \begin{equation*}
        \begin{aligned}
            |\mh|&\leq \left(\binom{2\ell'-1}{r-1}-1\right)|V(\mh)\setminus V(\mh')|+\binom{s}{r}+\frac{|V(\mh_{2}')|}{2\ell'}\binom{2\ell'}{r} \\
            &=\left(\binom{2\ell'-1}{r-1}-1\right)(n-|V(\mh_{1}')|-|V(\mh_{2}')|)+\binom{s}{r}+\frac{|V(\mh_{2}')|}{2\ell'}\binom{2\ell'}{r}\\
            &\leq \left(\binom{2\ell'-1}{r-1}-1\right)(n-2\ell'+1)+O(1)<\binom{2\ell'-1}{r-1}n+O(1),
        \end{aligned}
    \end{equation*}a contradiction with (\ref{eq-0}).
 \end{proof}
\begin{lemma}\label{lem: contains P4l}
    We have $4\ell'-1\leq\ell_0\leq 4\ell'$.
\end{lemma}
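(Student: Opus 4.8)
The plan is as follows. The upper bound $\ell_0 \le 4\ell'$ is already recorded in the paragraph preceding Lemma~\ref{2-01} (since $\mathcal{H}$, hence $\mathcal{H}'$, is $\text{Berge-}P_{2\ell+1}$-free and $2\ell = 4\ell'$), so it remains to prove $\ell_0 \ge 4\ell'-1$. I would argue by contradiction: suppose $\ell_0 \le 4\ell'-2$. Since $\mathcal{H}_1'$ is a connected component whose longest Berge path has length $\ell_0 < 4\ell'-1$, it is $\text{Berge-}P_{4\ell'-1}$-free. The hypotheses $2\ell' \ge r+7$ and $r \ge 3$ give $4\ell'-1 \ge 2r+13 \ge 18$, so Theorem~\ref{thm: connect Pl} applies with path length $4\ell'-1$; as $4\ell'-1$ is odd, the relevant parameter there is $\lfloor(4\ell'-1+1)/2\rfloor = 2\ell'$ and the indicator term vanishes. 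By Lemma~\ref{2-11}, $|V(\mathcal{H}_1')| > N_{4\ell'-1,r}$, so
$$|\mathcal{H}_1'| \le \binom{2\ell'-1}{r-1}\Bigl(|V(\mathcal{H}_1')| - 2\ell' + 1\Bigr) + \binom{2\ell'-1}{r}.$$

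Next I would bound the remaining hyperedges. By Lemma~\ref{2-01} we have $\ell_0 \ge \ell$, so $\mathcal{H}_1'$ already contains a $\text{Berge-}P_\ell$; as $\mathcal{H}_1'$ and $\mathcal{H}_2'$ are vertex-disjoint and $\mathcal{H}$ is $\text{Berge-}2P_\ell$-free, $\mathcal{H}_2'$ must be $\text{Berge-}P_\ell$-free. Since $\ell = 2\ell' \ge r+7 \ge r+1$ and $r \ge 3$, Theorem~\ref{thm: turan of BPl}(1) gives $|\mathcal{H}_2'| \le \frac{|V(\mathcal{H}_2')|}{2\ell'}\binom{2\ell'}{r}$. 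Finally, each vertex deleted when passing from $\mathcal{H}$ to $\mathcal{H}'$ had degree at most $\binom{2\ell'-1}{r-1}-1$ at the moment of its removal, so the pruning discarded at most $\bigl(\binom{2\ell'-1}{r-1}-1\bigr)\bigl(n-|V(\mathcal{H}')|\bigr)$ hyperedges.

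Summing the three contributions, writing $C = \binom{2\ell'-1}{r-1}$, $a = |V(\mathcal{H}_1')|$, $b = |V(\mathcal{H}_2')|$ (so $a+b = |V(\mathcal{H}')| \le n$), and using $\frac{1}{2\ell'}\binom{2\ell'}{r} = \frac{1}{r}C$, one obtains
$$|\mathcal{H}| \le a - C(2\ell'-1) + \binom{2\ell'-1}{r} + (C-1)n + b\Bigl(\tfrac{C}{r} - C + 1\Bigr).$$
Because $\tfrac{C}{r} - C + 1 \le 1$ and $b \ge 0$, the last term is at most $b$, hence
$$|\mathcal{H}| \le (a+b) + (C-1)n - C(2\ell'-1) + \binom{2\ell'-1}{r} \le Cn - C(2\ell'-1) + \binom{2\ell'-1}{r},$$
contradicting the lower bound (\ref{eq-0}) since $\binom{2\ell'-1}{r-2} \ge 1$. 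Therefore $\ell_0 \ge 4\ell'-1$, and with the trivial upper bound we conclude $4\ell'-1 \le \ell_0 \le 4\ell'$.

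I expect the main obstacle to be the bookkeeping that makes the contradiction go through: the extremal construction $\mathcal{H}_1(n,k,\ell)$ is essentially a connected graph of $\text{Berge-}P_{4\ell'}$-extremal shape, whose edge count exceeds the $\text{Berge-}P_{4\ell'-1}$ connected bound by exactly $\binom{2\ell'-1}{r-2}$, so one must verify that none of the other terms — the linear-in-$b$ slack from $\mathcal{H}_2'$, or the $O(1)$-per-vertex slack from the pruned vertices — can recover that deficit; this is precisely why the cheap inequality $\tfrac{C}{r} - C + 1 \le 1$ is the pivot. A secondary point is to check that the hypotheses of Theorems~\ref{thm: connect Pl} and~\ref{thm: turan of BPl}(1) are met for the shifted path lengths $4\ell'-1$ and $2\ell'$, using only $2\ell' \ge r+7$ and $r \ge 3$.
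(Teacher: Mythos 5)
Your proposal is correct and follows essentially the same route as the paper: the upper bound is inherited from the pruning setup, and the lower bound comes from assuming $\ell_0\le 4\ell'-2$, bounding $|\mathcal{H}_1'|$ via Theorem \ref{thm: connect Pl} (using Lemma \ref{2-11}), bounding $|\mathcal{H}_2'|$ via Theorem \ref{thm: turan of BPl}(1), adding the pruned edges, and contradicting (\ref{eq-0}) by the missing $\binom{2\ell'-1}{r-2}$ term. Your algebraic bookkeeping with $C=\binom{2\ell'-1}{r-1}$ is just a rewritten form of the paper's estimate, so there is nothing substantively different to flag.
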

\begin{proof}
    Assume $\ell_0\leq 4\ell'-2$. By Lemma \ref{2-11}, Theorem \ref{thm: connect Pl} and $\ell\geq r+7$, we have\begin{equation*}
        \begin{aligned}
            |\mh_{1}'|&\leq \mathrm{ex}_r^{con}(|V(\mh_{1}')|,\ber P_{\ell_0+1})\leq \mathrm{ex}_r^{con}(|V(\mh_{1}')|,\ber P_{4\ell'-1})\\
            &= \binom{2\ell'-1}{r-1}(|V(\mh_{1}')|-2\ell'+1)+\binom{2\ell'-1}{r}.
        \end{aligned}
    \end{equation*}Since $\mh_{2}'$ is $\ber P_{\ell}$-free, by Theorems \ref{thm: turan of BPl} (1), we have
     $|\mh_{2}'|\leq  \frac{|V(\mh_{2}')|}{2\ell'}\binom{2\ell'}{r}.$
   So we have
    \begin{equation*}
        \begin{aligned}
            |\mh|&\leq \left(\binom{2\ell'-1}{r-1}-1\right)|V(\mh)\setminus V(\mh')|+\binom{2\ell'-1}{r-1}|V(\mh_{1}')|+\frac{|V(\mh_{2}')|}{2\ell'}\binom{2\ell'}{r} \\
            &+\binom{2\ell'-1}{r}-(2\ell'-1)\binom{2\ell'-1}{r-1}\\
            &\leq \binom{2\ell'-1}{r-1}(n-2\ell'+1)+\binom{2\ell'-1}{r}\\
            &< \binom{2\ell'-1}{r-1}(n-2\ell'+1)+\binom{2\ell'-1}{r}+\binom{2\ell'-1}{r-2},
        \end{aligned}
    \end{equation*}
   a contradiction with (\ref{eq-0}).
\end{proof}

Now we are going to prove  Theorem \ref{thm: berge linear forest} when $k=2$.

\noindent\emph{Proof of Theorem \ref{thm: berge linear forest} when $k=2$ } Let $\mathcal{H}$ be a $\ber 2P_\ell$-free $r$-graph of order $n$  and size $\mathrm{ex}_{r}(n,\ber 2P_{\ell})$, where $\ell$ is even and $\ell=2\ell^{'}$. Let $\mathcal{H}'$ be the $r$-graph  obtained from $\mathcal{H}$ by repeatedly removing vertices of degree at most $\binom{2\ell'-1}{r-1}-1$ and all hyperedges incident to them.
Let $\mathcal{P}_{\ell_0}$ be a longest $\ber$path in $\mh'$ and  denote the  component contained $\mathcal{P}_{\ell_0}$ as $\mh_{1}'$. By Lemma \ref{lem: contains P4l}, $4\ell'-1\leq\ell_0\leq 4\ell'$. We complete the proof by considering two cases.
%By Lemma \ref{lem: contains P4l},  $|E(\mathcal{P})|=4\ell'$.

 \noindent{\bf Case 1.} $\ell_0= 4\ell'$.

Let $U=\{u_1,\dots,u_{4\ell'+1}\}$ and $\mathcal{F}=\{F_1,\dots,F_{4\ell'}\}$ be the defining vertices  and the defining hyperedges of $\mathcal{P}_{4\ell'}$ in $\mh'$, respectively. Since $\mathcal{H}$ is $\ber 2P_\ell$-free, we have
      $N_{\mh'\setminus \mathcal{ F}}(u_1)\cup N_{\mh'\setminus \mathcal{ F}}(u_{4\ell'+1})\subseteq U$.  By  Lemma \ref{2-11} and Theorem \ref{thm: connect Pl}, we have
    \begin{equation*}
        \begin{aligned}
            |\mh_{1}'|&\leq \mathrm{ex}_{r}^{con}(|V(\mh_{1}')|,\ber P_{4\ell'+1})
            =\binom{2\ell'}{r-1}|V(\mh_{1}')|+O(1).
        \end{aligned}
    \end{equation*}Thus we have
\begin{equation*}
        \begin{aligned}
            |\mh|&\leq \left(\binom{2\ell'-1}{r-1}-1\right)|V(\mh)\setminus V(\mh')|+\binom{2\ell'}{r-1}|V(\mh_{1}')|+\frac{|V(\mh_{2}')|}{2\ell'}\binom{2\ell'}{r} +O(1)\\
            &\leq \binom{2\ell'}{r-1}|V(\mh_{1}')|+\max\left\{\binom{2\ell'-1}{r-1}-1,\frac{1}{2\ell'}\binom{2\ell'}{r}\right\}(n-|V(\mh_{1}')|)+O(1)\\
            &= \binom{2\ell'}{r-1}|V(\mh_{1}')|+\left(\binom{2\ell'-1}{r-1}-1\right)(n-|V(\mh_{1}')|)+O(1).
        \end{aligned}
    \end{equation*}
   By (\ref{eq-0}), we have
    $$|V(\mh_{1}')|\geq \left(\binom{2\ell'}{r-1}-\binom{2\ell'-1}{r-1}+1\right)^{-1}(n+O(1))>0,$$
   which implies $|V(\mh_{1}')|$ is large enough.
We  have the following result.% where the main idea of the proof comes from  the proof of \textbf{Claim 12} in \cite{gyHori2021connected}.
\begin{lemma}\label{claim: inter degree of ends vertices}
      $|N_{\mh'\setminus\mathcal{F}}(u_1)|\geq 2\ell'-1$ and $|N_{\mh'\setminus \mathcal{F}}(u_{4\ell'+1})|\geq 2\ell'-1$.
\end{lemma}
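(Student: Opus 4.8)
I want to show that both endpoints $u_1$ and $u_{4\ell'+1}$ of the longest Berge path $\mathcal P_{4\ell'}$ have large intersection-degree in $\mh' \setminus \mathcal F$, meaning the number of \emph{distinct neighbours} they reach via hyperedges outside the defining hyperedges is at least $2\ell'-1$. The key structural facts already in hand are: (i) by $\ber 2P_\ell$-freeness and Lemma~\ref{2-01}, $N_{\mh'\setminus\mathcal F}(u_1)\cup N_{\mh'\setminus\mathcal F}(u_{4\ell'+1})\subseteq U$, so \emph{all} neighbours of the endpoints (through non-defining edges) lie on the path; (ii) every surviving vertex of $\mh'$, in particular $u_1$ and $u_{4\ell'+1}$, has degree at least $\binom{2\ell'-1}{r-1}$ in $\mh'$, by construction of $\mh'$. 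So the job is to extract a \emph{neighbourhood} lower bound from a \emph{degree} lower bound, using that the neighbourhood is confined to the $4\ell'+1$ vertices of $U$.

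**Main steps.** First I would argue by symmetry that it suffices to treat $u_1$. Suppose for contradiction $|N_{\mh'\setminus\mathcal F}(u_1)| \le 2\ell'-2$. Write $d_{\mh'}(u_1) = d_{\mathcal F}(u_1) + d_{\mh'\setminus\mathcal F}(u_1)$, where the first term counts defining hyperedges through $u_1$. Since $\mathcal P_{4\ell'}$ is a Berge path, $u_1$ lies in at most two defining hyperedges (the hyperedge of the first edge $u_1u_2$ and possibly the last, but for a genuine path endpoint essentially just $F_1$; in any case $d_{\mathcal F}(u_1) \le 4\ell'$ trivially, and in fact $\le 2$ by the path structure), so $d_{\mathcal F}(u_1) = O(1)$. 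Every hyperedge counted by $d_{\mh'\setminus\mathcal F}(u_1)$ contains $u_1$ and $r-1$ further vertices, \emph{all} of which are neighbours of $u_1$, hence all lie in $N_{\mh'\setminus\mathcal F}(u_1)$, a set of size at most $2\ell'-2$. Therefore
\[
d_{\mh'\setminus\mathcal F}(u_1) \le \binom{|N_{\mh'\setminus\mathcal F}(u_1)|}{r-1} \le \binom{2\ell'-2}{r-1}.
\]
Combining, $d_{\mh'}(u_1) \le \binom{2\ell'-2}{r-1} + O(1)$. But $d_{\mh'}(u_1) \ge \binom{2\ell'-1}{r-1} = \binom{2\ell'-2}{r-1} + \binom{2\ell'-2}{r-2}$, and since $\ell' \ge r \ge 3$ the term $\binom{2\ell'-2}{r-2}$ is a fixed quantity that grows at least linearly in $\ell'$ (it is $\ge \binom{2r-2}{r-2} > 0$ and in fact $\ge 2\ell'-2$ for $r\ge 3$), which exceeds the $O(1)$ slack coming from $d_{\mathcal F}(u_1)\le 2$. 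This contradiction gives $|N_{\mh'\setminus\mathcal F}(u_1)| \ge 2\ell'-1$, and the same argument applied to $u_{4\ell'+1}$ finishes the proof.

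**The main obstacle.** The delicate point is bookkeeping of the defining hyperedges through an endpoint: I need a clean bound $d_{\mathcal F}(u_1) = O(1)$ (really $\le 2$) so that the fixed positive gap $\binom{2\ell'-2}{r-2}$ dominates it. A path endpoint $u_1$ is incident, among the defining hyperedges, only to $F_1 = \phi(u_1u_2)$; it could a priori also be a non-defining vertex lying inside some other $F_i$, but such an $F_i$ would still have all its vertices among $N_{\mh'}(u_1)$ — and one must check whether those neighbours land inside $N_{\mh'\setminus\mathcal F}(u_1)$ or must be handled separately. The cleanest route is: the neighbours of $u_1$ reached through \emph{defining} hyperedges also lie in $U$ (they are vertices of the path), so in fact $N_{\mh'}(u_1)\subseteq U$ entirely; hence $d_{\mh'}(u_1) \le \binom{|N_{\mh'}(u_1)|}{r-1} \le \binom{|N_{\mh'\setminus\mathcal F}(u_1)| + c}{r-1}$ for a small constant $c$ accounting for the $\le r-1$ extra vertices visible only through $F_1$, and then solving $\binom{x+c}{r-1} \ge \binom{2\ell'-1}{r-1}$ forces $x \ge 2\ell'-1$ once $\ell'$ is large. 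I would present it in this second form to avoid splitting hyperedges into types, and invoke that $|V(\mh_1')|$ (hence the relevant part of the hypergraph) is already known to be large from the displayed estimate just above the lemma.
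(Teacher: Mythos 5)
Your skeleton (confine the relevant neighbourhood to $U$, then convert the degree bound $d_{\mh'}(u_1)\ge\binom{2\ell'-1}{r-1}$ into a neighbourhood bound via $d_{\mh'\setminus\mathcal F}(u_1)\le\binom{|N_{\mh'\setminus\mathcal F}(u_1)|}{r-1}$) is the same as the paper's, but the step you dismiss as bookkeeping is exactly where the real content lies, and your handling of it is wrong. In a Berge path the defining hyperedge $F_i$ is only required to contain the two defining vertices $u_i,u_{i+1}$; its remaining $r-2$ vertices are arbitrary, so $u_1$ may occur as a non-defining vertex in many of the $4\ell'$ defining hyperedges. Hence the claim ``$d_{\mathcal F}(u_1)\le 2$ by the path structure'' is false, and so is the claim in your fallback route that the neighbours of $u_1$ reached through defining hyperedges lie in $U$ (those extra $r-2$ vertices of an $F_i$ need not be path vertices, so your constant $c$ is not $\le r-1$ but a priori of order $4\ell'(r-1)$). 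The trivial bound $d_{\mathcal F}(u_1)\le 4\ell'$ does not rescue the argument either: the available slack is $\binom{2\ell'-1}{r-1}-\binom{2\ell'-2}{r-1}=\binom{2\ell'-2}{r-2}$, which for $r=3$ equals $2\ell'-2<4\ell'$, so for $3$-graphs (allowed by the theorem, with $\ell'\ge 5$) your inequality chain collapses. As a smaller point, your last display is also mis-solved: $\binom{x+c}{r-1}\ge\binom{2\ell'-1}{r-1}$ only forces $x\ge 2\ell'-1-c$, not $x\ge 2\ell'-1$.

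The missing idea is the paper's device for making $d_{\mathcal F}(u_1)$ genuinely small. Choose, among all copies of $\ber P_{4\ell'}$, one minimizing $d_{\mathcal F}(u_1)+d_{\mathcal F}(u_{4\ell'+1})$. Whenever $u_1\in F_i$, the path can be re-rooted as $\{u_i,F_{i-1},u_{i-1},\dots,u_2,F_1,u_1,F_i,u_{i+1},\dots,F_{4\ell'},u_{4\ell'+1}\}$, so minimality yields $d_{\mathcal F}(u_1)\le d_{\mathcal F}(u_i)$ for each such $i$; double counting the incidences $\{(u,F): F\in\mathcal F,\ u\in F\cap U\}$ then gives $d_{\mathcal F}(u_1)^2\le r|\mathcal F|=4r\ell'$, i.e.\ $d_{\mathcal F}(u_1)\le\sqrt{4r\ell'}$, and the inequality $\binom{2\ell'-1}{r-1}-\sqrt{4r\ell'}>\binom{2\ell'-2}{r-1}$ does hold under the hypothesis on $\ell'$ (only barely at $r=3$, $\ell'=5$). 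Without this minimal-path rotation argument, or some substitute bound on $d_{\mathcal F}(u_1)$ that beats $\binom{2\ell'-2}{r-2}$, your proof has a genuine gap.
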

\begin{proof} Note that
      $N_{\mh'\setminus \mathcal{ F}}(u_1)\cup N_{\mh'\setminus \mathcal{ F}}(u_{4\ell'+1})\subseteq U$. % Let $x_i=|\mathcal{N}_{\mathcal{F}}(u_i)|$, $1\le i\le 4\ell'+1$.
       We  assume $\mathcal{P}$ is a $\ber P_{4\ell'}$  minimizing the sum $d_{\mathcal{F}}(u_1)+d_{\mathcal{F}}(u_{4\ell'+1})$. If $u_1\in F_i$ for some $2\le i\le 4\ell'+1$, then $$\{u_i,F_{i-1},u_{i-1},F_{i-2},\dots,u_2,F_1,u_1,F_i,u_{i+1},F_{i+1},\dots,u_{4\ell'},F_{4\ell'},u_{4\ell'+1}\}$$
    is a $\ber P_{4\ell'}$. By the minimality of $d_{\mathcal{F}}(u_1)+d_{\mathcal{F}}(u_{4\ell'+1})$, we have $d_{\mathcal{F}}(u_1)\leq d_{\mathcal{F}}(u_i)$. Denote $X=\{(u,F):F\in \mathcal{F},u\in F\cap U\}$. Then $d_{\mathcal{F}}(u_1)^2\le |X|\le r|\mathcal{F}|=4r\ell'$ which implies
    $d_{\mathcal{F}}(u_1)\leq\sqrt{4r\ell'}$. By the same argument, we have  $d_{\mathcal{F}}(u_{4\ell'+1})\leq \sqrt{4r\ell'}$. Note that $d_{\mh'}(u_1)\ge \binom{2\ell'-1}{r-1}$.
    When $2\ell'\geq r+6$, we have $d_{\mh'\setminus \mathcal{F}}(u_1)\ge \binom{2\ell'-1}{r-1}-\sqrt{4r\ell'}> \binom{2\ell'-2}{r-1}$.  Thus $|N_{\mh'\setminus \mathcal{F}}(u_1)|\geq 2\ell'-1$ and similarly, $|N_{\mh'\setminus \mathcal{F}}(u_{4\ell'+1})|\geq 2\ell'-1$.
\end{proof}

\vskip.2cm
For a set $S\subseteq U$, define $S^{-}=\{u_{i}:~u_{i+1}\in S\}$ and $S^{+}=\{u_{i}:~u_{i-1}\in S\}$. Define $S_{1}=N_{\mh_{1}'\setminus \mathcal{F}}(u_1)$ and $S_{4\ell'+1}=N_{\mh_{1}'\setminus \mathcal{F}}(u_{4\ell'+1})$ for short. Then $S_1\cup S_{4\ell'+1}\subseteq U$ and we have the following lemma.
\begin{lemma}\label{le-2}
     $\mh_{1}'$ contains a Berge cycle of length $4\ell'$.
\end{lemma}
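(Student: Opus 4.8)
We want to show that $\mathcal{H}_1'$ contains a Berge cycle of length $4\ell'$ — that is, a closed Berge path using $4\ell'$ distinct defining vertices and $4\ell'$ distinct hyperedges.

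The key fact we have is Lemma~\ref{claim: inter degree of ends vertices}: $|S_1| \geq 2\ell'-1$ and $|S_{4\ell'+1}| \geq 2\ell'-1$, where $S_1, S_{4\ell'+1} \subseteq U = \{u_1,\ldots,u_{4\ell'+1}\}$ are neighborhoods of the two endpoints of the longest Berge path $\mathcal{P}_{4\ell'}$, computed in $\mathcal{H}_1' \setminus \mathcal{F}$ (so these are "chords" of the path that don't use the path's own hyperedges).

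The plan is to produce a Berge cycle living on $4\ell'$ of the $4\ell'+1$ vertices $U=\{u_1,\dots,u_{4\ell'+1}\}$ of the longest Berge path $\mathcal P_{4\ell'}$ — that is, a ``Hamilton minus one'' cycle on $U$ — so the natural tool is a P\'osa-type rotation argument fed by the two large neighbourhoods $S_1,S_{4\ell'+1}$. As in Lemma \ref{claim: inter degree of ends vertices}, I would always work with a longest Berge path chosen so as to minimize the number of its own defining hyperedges through its two endpoints; by the same double-counting estimate this number is $O(\sqrt{r\ell'})$, negligible next to $\binom{2\ell'-1}{r-1}$, so every endpoint arising in the argument has at least $2\ell'-1$ neighbours on $U$ and these survive the removal of any bounded collection of hyperedges.

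\textbf{Step 1 (the cycle we aim for).} Suppose there is an index $i$ with $2\le i\le 4\ell'$, a hyperedge $G\in\mathcal H_1'\setminus\mathcal F$ with $\{u_1,u_{i+1}\}\subseteq G$, and a hyperedge $G'\in\mathcal H_1'\setminus\mathcal F$ with $G'\ne G$ and $\{u_{i-1},u_{4\ell'+1}\}\subseteq G'$. Then
$$u_1\,F_1\,u_2\cdots F_{i-2}\,u_{i-1}\,G'\,u_{4\ell'+1}\,F_{4\ell'}\,u_{4\ell'}\cdots F_{i+1}\,u_{i+1}\,G\,u_1$$
is a Berge cycle through exactly the $4\ell'$ vertices of $U\setminus\{u_i\}$, using the $4\ell'$ pairwise distinct hyperedges $F_1,\dots,F_{i-2},F_{i+1},\dots,F_{4\ell'},G,G'$ (the hyperedges $F_{i-1},F_i$ are intentionally left unused, which is precisely why it is enough for $G,G'$ to miss $\mathcal F$). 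In the notation fixed before the lemma this says: it suffices to find a vertex of $S_1^-\cap S_{4\ell'+1}^+$ lying in $\{u_2,\dots,u_{4\ell'}\}$ with distinct witnessing hyperedges; moreover a single chord already suffices when $u_{4\ell'}\in S_1$ (take the cycle $u_1u_2\cdots u_{4\ell'}u_1$) or when $u_2\in S_{4\ell'+1}$.

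\textbf{Step 2 (rotations and counting).} Assume Step 1 fails. Since $|S_1^-|=|S_1|\ge 2\ell'-1$, $|S_{4\ell'+1}^+|=|S_{4\ell'+1}|\ge 2\ell'-1$, $S_1^-\subseteq\{u_1,\dots,u_{4\ell'}\}$ and $S_{4\ell'+1}^+\subseteq\{u_2,\dots,u_{4\ell'+1}\}$, the failure forces these two sets to be disjoint, hence to cover all but a bounded number of the $4\ell'+1$ vertices of $U$; the naive pigeonhole is short only by a bounded constant. I would close this gap by rotating: if $u_{i+1}\in S_1$ then
$$u_i\,F_{i-1}\,u_{i-1}\cdots F_1\,u_1\,G\,u_{i+1}\,F_{i+1}\,u_{i+2}\cdots F_{4\ell'}\,u_{4\ell'+1}$$
is again a longest Berge path, with vertex set $U$, left end $u_i$, right end $u_{4\ell'+1}$, and with $F_i$ now freed. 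Iterating produces a set $R\subseteq U$ of vertices each usable as the left end of a longest Berge path keeping $u_{4\ell'+1}$ fixed; every $v\in R$ has all its $\mathcal H_1'$-neighbours in $U$ (else that path extends, contradicting $\ell_0=4\ell'$), has at least $2\ell'-1$ of them, and meets only $O(\sqrt{r\ell'})$ of its own path's defining hyperedges. A P\'osa-type inclusion for the reachable endpoints then bounds $|N_{\mathcal H_1'}(R)|$ by roughly $3|R|$, and since $N_{\mathcal H_1'}(R)\subseteq U$ has size at least $2\ell'-1$ this forces $|R|=\Omega(\ell')$. Running the Step 1 construction with the various $v\in R$ in the role of $u_1$ gives, for each such $v$, a family of at least $2\ell'-O(1)$ admissible positions inside a ground set of size $\approx 4\ell'$; because $|R|=\Omega(\ell')$ the union of these families covers all but $O(1)$ of the ground set, so it must meet the analogous $\ge 2\ell'-O(1)$ family coming from the fixed end $u_{4\ell'+1}$, producing the crossing pair of chords and hence the Berge cycle of length $4\ell'$.

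\textbf{Main obstacle.} The delicate point is the bookkeeping in Step 2: after a rotation the two current endpoints may lie on several of the current defining hyperedges, and the two closing hyperedges of the cycle must be chosen distinct from each other and from the $4\ell'-2$ path hyperedges actually used; keeping all of these quantities bounded simultaneously — so that ``$2\ell'-1$ neighbours'' keeps translating into ``$2\ell'-O(1)$ genuinely usable positions'' after every rotation — is exactly where the extremal choice of the path and the $O(\sqrt{r\ell'})$ estimate behind Lemma \ref{claim: inter degree of ends vertices} (which uses $2\ell'\ge r+7$) are needed. A harmless side case is when a closing chord is forced to equal one of the unused path hyperedges $F_{i-1},F_i$; this only helps, since it is still a distinct, legitimate closing hyperedge.
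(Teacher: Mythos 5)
Your Step~1 coincides with the paper's key construction: a vertex $u_i\in S_{1}^{-}\cap S_{4\ell'+1}^{+}$ with two distinct witnessing hyperedges outside $\mathcal{F}$ closes a Berge cycle of length $4\ell'$ (and the degenerate case of a single common witnessing hyperedge gives a $\ber C_{4\ell'+1}$, hence a $\ber P_{4\ell'+1}$ by connectedness and $|V(\mh_{1}')|$ being large, contradicting Lemma~\ref{lem: contains P4l}). The gap is Step~2. Lemma~\ref{claim: inter degree of ends vertices} only gives $|S_{1}^{-}|+|S_{4\ell'+1}^{+}|\ge 4\ell'-2$ against $|U|=4\ell'+1$, and no purely local rotation argument can close this gap, because the local data you allow yourself (longest Berge path of length exactly $4\ell'$, both ends with at least $2\ell'-1$ neighbours on $U$, minimum degree at least $\binom{2\ell'-1}{r-1}$, a large connected component) is consistent with the non-existence of a $\ber C_{4\ell'}$. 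For instance, take two complete $r$-graphs on $\{u_1,\dots,u_{2\ell'}\}$ and $\{u_{2\ell'+2},\dots,u_{4\ell'+1}\}$, join them through the single vertex $u_{2\ell'+1}$ by two hyperedges, and attach arbitrarily many further complete $r$-graphs on $2\ell'$ vertices to $u_{2\ell'+1}$ by one hyperedge each: every inter-block hyperedge contains $u_{2\ell'+1}$, so every Berge cycle stays inside one block together with $u_{2\ell'+1}$ and has length at most $2\ell'+1$, while the longest Berge path has length exactly $4\ell'$ and its two ends have exactly $2\ell'-1$ neighbours on $U$. What rules such a configuration out in the paper is not rotation but the extremality of $\mh$: assuming no $\ber C_{4\ell'}$, Fact~1 forces every hyperedge of $\mh_{1}'\setminus\mathcal{F}$ meeting $V(\mh_{1}')\setminus U$ to intersect $U$ only inside $A=U\setminus(S_{1}^{-}\cup S_{4\ell'+1}^{+})$ with $1\le |A|\le 3$, and then the Berge-path Tur\'an bound (Theorem~\ref{thm: turan of BPl}) applied to the families $\mathcal{F}_{|A|,i}$ yields $|\mh_{1}'|\le\left(\frac{1}{2\ell'}\binom{2\ell'}{r}+\frac{2}{2\ell'}\binom{2\ell'}{r-1}+\frac{1}{2\ell'}\binom{2\ell'}{r-2}\right)|V(\mh_{1}')|+O(1)$, which by Lemma~\ref{lemma: first} contradicts the lower bound (\ref{eq-0}). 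Your argument never invokes the edge count of $\mh$, so it cannot reach the conclusion.

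Beyond this structural issue, the closing step of your Step~2 is invalid on its own terms: the families of ``admissible positions'' whose union you take are computed with respect to different rotated paths, whereas the crossing-chord construction of Step~1 requires both chords to be positioned with respect to one and the same path (the sets $S_{v}^{-}$ and $S_{4\ell'+1}^{+}$ change when the path is re-rooted, so covering ``all but $O(1)$ of the ground set'' by a union over distinct paths does not produce two crossing chords on any single path). In the example above every rotation keeps the left endpoint inside the left block, so for each individual path the two position sets stay on opposite sides of $u_{2\ell'+1}$ and never meet, even though their union over all rotations covers almost all of $U$.
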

\begin{proof} By contradiction.
We first have the following facts.
\par\noindent
\textbf{Fact 1:} If  $u_i\in S_1$ (resp. $u_i\in S_{4\ell'+1}$), then $N_{\mh_{1}'\setminus \mathcal{F}}(u_{i-1})\subseteq U$ (resp. $N_{\mh_{1}'\setminus \mathcal{F}}(u_{i+1})\subseteq U$), where $2\le i\le 4\ell'+1$ (resp. $1\le i\le 4\ell'$).

\noindent \textbf{Proof of Fact 1:} Suppose there is $2\le i\le 4\ell'+1$ such that $u_i\in S_1$ and $N_{\mh_{1}'\setminus \mathcal{F}}(u_{i-1})\not\subseteq U$. Then there are $F_{0},\hat{F}_{0}\in \mh_{1}'\setminus\mathcal{F}$ such that $\{u_{1},u_{i}\}\subseteq F_{0}$ and $\{u_0,u_{i-1}\}\subseteq \hat{F}_{0}$, where $u_{0}\notin U$. Then  $\{u_{4\ell'+1},F_{4\ell'},\ldots,F_{i},u_{i},F_{0},u_1,F_{1},\ldots,F_{i-2},u_{i-1},\hat{F}_{0},u_0\}$ is a $\ber P_{4\ell'+1}$ in $\mathcal{H}'$, a contradiction with Lemma \ref{lem: contains P4l}. \q %Similarly, if $u_i\in S_{4\ell'+1}$, then $N_{\mh_{1}'\setminus \mathcal{F}}(u_{i+1})\subseteq U$.
\par\noindent
\textbf{Fact 2:} $S_{1}^{-}\cap S_{4\ell'+1}^{+}=\emptyset.$

\noindent \textbf{Proof of Fact 2:} Suppose there is $u_i\in S_{1}^{-}\cap S_{4\ell'+1}^{+}$, where $2\le i\le 4\ell'$.
Then there exist $F_{0},\hat{F}_{0}\in \mh_{1}'\setminus\mathcal{F}$ such that $\{u_1,u_{i+1}\}\subseteq F_0,\{u_{i-1},u_{4\ell'+1}\}\subseteq\hat{F}_{0}$. If $F_0=\hat{F}_0$, $\{u_{1},u_{4\ell'+1}\}\subseteq F_{0}$ and thus leads to a $\ber C_{4\ell'+1}$. Since $|V(\mh_{1}')|$ is large enough, there exists a vertex $x_0\in V(\mh_{1}')$ such that $x_0\notin V(\ber C_{4\ell'+1})$. Since $\mh_{1}'$ is connected, there is a $\ber P_{4\ell'+1}$ in $\mh_{1}'$, a contradiction  with Lemma \ref{lem: contains P4l}. Thus $F_0\not=\hat{F}_0$.
Therefore $$\{u_1,F_{0},u_{i+1},F_{i+1},\ldots,F_{4\ell'},u_{4\ell'+1},\hat{F}_{0},u_{i-1},F_{i-2},\ldots,F_{1},u_1\}$$ is a $\ber C_{4\ell'}$ in $\mh_{1}'$, a contradiction.\q

By Lemma \ref{claim: inter degree of ends vertices} and Fact 2, we easily have the following fact.

\par\noindent
\textbf{Fact 3:}  $|S_{1}^{-}|=|S_1|\geq 2\ell'-1$, $|S_{4\ell'+1}^{+}|=|S_{4\ell'+1}|\geq 2\ell'-1$ and $|S_{1}^{-} \cup S_{4\ell'+1}^{+}|\geq 4\ell'-2$.
\par
\vskip.2cm
Now we complete the proof of Lemma \ref{le-2}. By Fact 3, we have $|U\setminus (S_{1}^{-}\cup S_{4\ell'+1}^{+})|\le 3$.
\par
Since $|V(\mh_{1}')|$ is large enough, there exists $u\in V(\mh_{1}')\setminus (\cup_{i=1}^{4\ell'}F_i)$. If $U=S_{1}^{-}\cup S_{4\ell'+1}^{+}$, by Fact 1, there is not a  $E\in \mh_{1}'\setminus\mathcal{F}$ such that $u\in E$ and $E\cap U\not=\emptyset$. By $\mh_{1}'$ being connected, there  exists $v\in F_i\setminus U$ and $F\notin \mathcal{F}$ such that $\{u,v\}\subseteq F$, where $1\le i\le 4\ell'$. Recall that $u_{i}\in S_{1}^{-}\cup S_{4\ell'+1}^{+}$, say $u_{i}\in S_{1}^{-}$. Then $u_{i+1}\in S_{1}$ which implies there exists $\hat{F}\in \mh_{1}'\setminus \mathcal{F}$ such that $\{u_1,u_{i+1}\}\subseteq \hat{F}$. By Fact 1,  $\hat{F}\not=F$. Thus $\{u_{4\ell'+1},F_{4\ell'},\ldots,F_{i+1},u_{i+1},\hat{F},u_1,F_1,\ldots,u_{i},F_i,v,F,u\}$ leads a $\ber P_{4\ell'+2}$, a contradiction  with Lemma \ref{lem: contains P4l}. Hence $|U\setminus (S_{1}^{-}\cup S_{4\ell'+1}^{+})|\ge 1$.
\par
Denote $A=U\setminus (S_{1}^{-}\cup S_{4\ell'+1}^{+})$ for short. Let $A=\{u_{j_1},u_{j_2},u_{j_3}\}$ (resp. $A=\{u_{j_2},u_{j_3}\}$ and $A=\{u_{j_3}\}$) when $|A|=3$ (resp. $|A|=2$ and $|A|=1$), where $j_1<j_2<j_3$. Let $\mh_{1}'=\mathcal{F}\cup\hat{\mathcal{F}}\cup \overline{\mathcal{F}}$, where
\begin{equation*}
     \hat{\mathcal{F}}=\{E\in \mh_{1}'\setminus\mathcal{F} : E\subseteq U\} \text{, and~}
    \overline{\mathcal{F}}=\{E\in \mh_{1}'\setminus\mathcal{F} : E\setminus U\neq \emptyset\}.
\end{equation*}
When $|A|=3$ (resp. $|A|=2$), we have either $j_1\not=2\ell'+1$ (resp. $j_2\not=2\ell'+1$) or $j_3\not=2\ell'+1$.  We will assume $j_3\neq 2\ell'+1$. So  $\{u_1,F_{1},\ldots,F_{j_{3}-2},u_{j_3-1}\}$ or $\{u_{j_3+1},F_{j_3+1},\ldots,F_{4\ell'},u_{4\ell'+1}\}$ contains a $\ber P_{2\ell'}$  for $2\le |A|\le 3$.
Note that for any $E\in \overline{\mathcal{F}}$, $E\cap U\subseteq A$. %We classify $ \overline{\mathcal{F}}$ into four types.
For $|A|=3$, let
\begin{equation*}
    \begin{aligned}
        &\mathcal{F}_{3,1}=\{E\in \overline{\mathcal{F}} : E\cap \{u_{j_1},u_{j_2}\}=\{u_{j_1}\}\}
        ,~\mathcal{F}_{3,2}=\{E\in \overline{\mathcal{F}} : E\cap \{u_{j_1},u_{j_2}\} =\{u_{j_2}\}\},\\
        &\mathcal{F}_{3,3}=\{E\in \overline{\mathcal{F}} : E\cap \{u_{j_1},u_{j_2}\}=\{u_{j_1},u_{j_2}\}\},~\mathcal{F}_{3,4}=\{E\in \overline{\mathcal{F}} :  \text{$E\cap A=\{u_{j_3}\}$ or $E\cap U=\emptyset$}\}.
    \end{aligned}
\end{equation*}
For $|A|=2$, let
\begin{equation*}
    \begin{aligned}
        &\mathcal{F}_{2,1}=\{E\in \overline{\mathcal{F}} : E\cap \{u_{j_2},u_{j_3}\}=\{u_{j_2}\}\}
        ,~\mathcal{F}_{2,2}=\{E\in \overline{\mathcal{F}} : E\cap \{u_{j_2},u_{j_3}\} =\{u_{j_3}\}\},\\
        &\mathcal{F}_{2,3}=\{E\in \overline{\mathcal{F}} : E\cap \{u_{j_2},u_{j_3}\}=\{u_{j_2},u_{j_3}\}\},\text{and}~\mathcal{F}_{2,4}=\{E\in \overline{\mathcal{F}} :  \text{ $E\cap U=\emptyset$}\}.
    \end{aligned}
\end{equation*}For $|A|=1$, let
\begin{equation*}
        \mathcal{F}_{1,1}=\{E\in \overline{\mathcal{F}} : E\cap \{u_{j_3}\} =\{u_{j_3}\}\},~\mathcal{F}_{1,2}=\{E\in \overline{\mathcal{F}} :  \text{  $E\cap U=\emptyset$}\}.
\end{equation*}
Recall that $\mh_{1}'$ is $\ber 2P_{2\ell'}$-free and $\{u_1,F_{1},\ldots,F_{j_{3}-2},u_{j_3-1}\}$ or $\{u_{j_3+1},F_{j_3+1},\ldots,F_{4\ell'},u_{4\ell'+1}\}$ is  a $\ber P_{2\ell'}$. Set $\mathcal{F}_{3,i}'=\{E\setminus\{u_{j_i}\}:~E\in \mathcal{F}_{3,i}\}$, $i=1,2$. Then  $|\mathcal{F}_{3,i}|=|\mathcal{F}_{3,i}'|$ and $\mathcal{F}_{3,i}'$ is an $(r-1)$-uniform $\ber P_{2\ell'}$-free hypergraph for $i=1,2$. Set $\mathcal{F}_{3,3}'=\{E\setminus\{u_{j_1},u_{j_2}\}:~E\in \mathcal{F}_{3,3}\}$. When $r\geq 4$, $\mathcal{F}_{3,3}'$ is an $(r-2)$-uniform $\ber P_{2\ell'}$-free hypergraph and we have $|\mathcal{F}_{3,3}|=|\mathcal{F}_{3,3}'|$. When $r=3$, $|\mathcal{F}_{3,3}|\leq |V(\mh_{1}')|$. Notice that $\mathcal{F}_{3,4}$ is an $r$-uniform $\ber P_{2\ell'}$-free hypergraph. By Theorem \ref{thm: turan of BPl} (1), we have  the following upper bounds.
\begin{equation*}
    \begin{aligned}
        &|\mathcal{F}_{3,1}|\leq \frac{1}{2\ell'}\binom{2\ell'}{r-1}|V(\mh_{1}')|, ~|\mathcal{F}_{3,2}|\leq \frac{1}{2\ell'}\binom{2\ell'}{r-1}|V(\mh_{1}')|,\\
        &|\mathcal{F}_{3,3}|\leq \frac{1}{2\ell'}\binom{2\ell'}{r-2}|V(\mh_{1}')|,~\text{and}~|\mathcal{F}_{3,4}|\leq \frac{1}{2\ell'}\binom{2\ell'}{r}|V(\mh_{1}')|.\\
    \end{aligned}
\end{equation*}By the same argument for $|A|=2$ and  $|A|=1$, we have
\begin{align*}
        &|\mathcal{F}_{2,1}|\leq \frac{1}{2\ell'}\binom{2\ell'}{r-1}|V(\mh_{1}')|, ~|\mathcal{F}_{2,2}|\leq \frac{1}{2\ell'}\binom{2\ell'}{r-1}|V(\mh_{1}')|,\\
        &|\mathcal{F}_{2,3}|\leq \frac{1}{2\ell'}\binom{2\ell'}{r-2}|V(\mh_{1}')|,~|\mathcal{F}_{2,4}|\leq \frac{1}{2\ell'}\binom{2\ell'}{r}|V(\mh_{1}')|,\\
        &|\mathcal{F}_{1,1}|\leq \frac{1}{2\ell'}\binom{2\ell'}{r-1}|V(\mh_{1}')|,~|\mathcal{F}_{1,2}|\leq \frac{1}{2\ell'}\binom{2\ell'}{r}|V(\mh_{1}')|.
    \end{align*}So we just need to consider the case $|A|=3$.
When $|A|=3$, we have
\begin{align*}
        |\mh|&\leq |\mh'|+\left(\binom{2\ell'-1}{r-1}-1\right)|V(\mh)\setminus V(\mh')|\\
        &\leq |\mh_{1}'|+|\mh_{2}'|+\left(\binom{2\ell'-1}{r-1}-1\right)|V(\mh)\setminus V(\mh')|\\
        &\leq |\mathcal{F}|+|\hat{\mathcal{F}}|+\sum_{i=1}^{4}|\mathcal{F}_{3,i}|+\mathrm{ex}_{r}(|V(\mh_{2}')|,\ber P_{2\ell'})+\left(\binom{2\ell'-1}{r-1}-1\right)|V(\mh)\setminus V(\mh')|\\
        &\leq  \left(\frac{1}{2\ell'}\binom{2\ell'}{r}+\frac{2}{2\ell'}\binom{2\ell'}{r-1}+\frac{1}{2\ell'}\binom{2\ell'}{r-2}\right)|V(\mh_{1}')|+4\ell'+\binom{4\ell'+1}{r}\\
        &+\frac{1}{2\ell'}\binom{2\ell'}{r}|V(\mh_{2}')|+\left(\binom{2\ell'-1}{r-1}-1\right)(n-|V(\mh_{1}')|-|V(\mh_{2}')|)\\
        &< \binom{2\ell'-1}{r-1}|V(\mh_{1}')|+\binom{2\ell'-1}{r}+\binom{2\ell'-1}{r-2}-(2\ell'-1)\binom{2\ell'-1}{r-1}\\
                &+\frac{1}{2\ell'}\binom{2\ell'}{r}|V(\mh_{2}')|+\left(\binom{2\ell'-1}{r-1}-1\right)(n-|V(\mh_{1}')|-|V(\mh_{2}')|)\\
        &\leq \binom{2\ell'-1}{r-1}(n-2\ell'+1)+\binom{2\ell'-1}{r}+\binom{2\ell'-1}{r-2},
    \end{align*}a contradiction with (\ref{eq-0}).
 Note that the fifth inequality comes from Lemma \ref{lemma: first} and when $n$ is large enough.
\end{proof}

Let $S_{\ell,v}$ be a star with $\ell$ edges and center $v$. Khormali and Palmer \cite{KHORMALI2022103506} proved the following lemma establishing the degree conditions for the existence of a $\ber S_{\ell,x}$.
\begin{lemma}[ \cite{KHORMALI2022103506}]\label{lem: berge star}
Fix integers $\ell>r\ge 2$ and let $\mh$ be an $r$-graph. If $x$ is a vertex of degree $d_{\mh}(x)>\binom{\ell-1}{r-1}$, then $\mh$ contains a $\ber S_{\ell,x}$ with center $x$.
\end{lemma}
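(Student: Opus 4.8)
The plan is to recast the existence of a $\ber S_{\ell,x}$ as a system of distinct representatives (SDR) problem. Write $E_{\mh}(x)=\{E\in\mh : x\in E\}$, and for each $E\in E_{\mh}(x)$ put $E^{\circ}=E\setminus\{x\}$, an $(r-1)$-set; since $E$ is recovered from $E^{\circ}$ by adjoining $x$, the map $E\mapsto E^{\circ}$ is injective, so $\mathcal{A}=\{E^{\circ}:E\in E_{\mh}(x)\}$ is a family of $d_{\mh}(x)$ \emph{distinct} $(r-1)$-sets, none containing $x$. A $\ber S_{\ell,x}$ with centre $x$ is precisely a choice of $\ell$ members of $\mathcal{A}$ together with an SDR for them (the representatives being the leaves). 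So it suffices to show $\mathcal{A}$ has a partial SDR of size $\ell$, i.e.\ a subfamily of size $\ell$ possessing an SDR.

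Next I would apply the defect form of Hall's theorem: a finite family $\mathcal{A}=\{A_1,\dots,A_m\}$ has a partial SDR of size $m-d$, where $d=\max_{\mathcal{J}\subseteq[m]}\big(|\mathcal{J}|-|\bigcup_{i\in\mathcal{J}}A_i|\big)$. If $\mathcal{A}$ had no partial SDR of size $\ell$, there would be $\mathcal{J}$ with $|\mathcal{J}|-|\bigcup_{i\in\mathcal{J}}A_i|\ge m-\ell+1$; setting $j=|\mathcal{J}|$ and $t=|\bigcup_{i\in\mathcal{J}}A_i|$ this yields $m\le j-t+\ell-1$, and in particular $t\le j-m+\ell-1\le\ell-1$ since $j\le m$. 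As the $A_i$ with $i\in\mathcal{J}$ are distinct $(r-1)$-subsets of a set of size $t$, also $j\le\binom{t}{r-1}$, hence $d_{\mh}(x)=m\le\binom{t}{r-1}+(\ell-1-t)$ with $0\le t\le\ell-1$.

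To close the argument I would verify the elementary inequality $\binom{t}{r-1}+(\ell-1-t)\le\binom{\ell-1}{r-1}$ for all integers $0\le t\le\ell-1$, using $\ell>r\ge2$. Set $g(t)=\binom{\ell-1}{r-1}-\binom{t}{r-1}-(\ell-1-t)$. For $t\ge r-2$, Pascal's identity gives $g(t)-g(t+1)=\binom{t}{r-2}-1\ge0$, and $g(\ell-1)=0$, so $g(t)\ge0$ on $[r-2,\ell-1]$; for $0\le t<r-2$ one has $\binom{t}{r-1}=0$, and $g(t)\ge\binom{\ell-1}{r-1}-(\ell-1)\ge0$ because $\binom{n}{k}\ge n$ whenever $2\le n$ and $1\le k\le n-1$, applied with $n=\ell-1\ge r$ and $k=r-1$. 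This forces $d_{\mh}(x)\le\binom{\ell-1}{r-1}$, contradicting the hypothesis; therefore $\mathcal{A}$ has a partial SDR of size $\ell$ and $\mh$ contains a $\ber S_{\ell,x}$.

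The main point to watch is the accounting in the binomial inequality: a naive greedy construction of the star only yields the weaker bound $d_{\mh}(x)\le\binom{\ell-1}{r-1}+\ell-1$, because it separately charges the $\ell-1$ already‑chosen leaves and the $\binom{\ell-1}{r-1}$ "blocked" edges. The sharp threshold $\binom{\ell-1}{r-1}$ is only recovered by the Hall/König bound $\binom{t}{r-1}+(\ell-1-t)$, which trades one against the other; equality at $t=\ell-1$ corresponds exactly to the extremal configuration where $\mh$ consists of all $(r-1)$-subsets of a fixed $(\ell-1)$-set together with $x$. Apart from checking the small cases $t<r-2$ and the boundary $t=\ell-1$, everything is a direct translation plus a single invocation of Hall's theorem.
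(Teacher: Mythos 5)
Your argument is correct and complete. Note, however, that the paper does not prove this lemma at all: it is quoted verbatim from Khormali and Palmer \cite{KHORMALI2022103506}, so there is no in-paper proof to compare against. Your reduction is the natural one: a $\ber S_{\ell,x}$ with centre $x$ is exactly a choice of $\ell$ distinct hyperedges through $x$ together with a system of distinct representatives for the $(r-1)$-sets $E\setminus\{x\}$ (equivalently, a matching of size $\ell$ in the bipartite incidence graph between $E_{\mh}(x)$ and $V(\mh)\setminus\{x\}$), and the defect form of Hall's theorem (or K\"onig's theorem, which gives the same bound via a vertex cover of size $\le \ell-1$ split into $\ell-1-t$ hyperedges and $t$ vertices) yields $d_{\mh}(x)\le \binom{t}{r-1}+(\ell-1-t)$ for some $0\le t\le \ell-1$. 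Your verification of the key inequality $\binom{t}{r-1}+(\ell-1-t)\le\binom{\ell-1}{r-1}$ is sound: the difference function decreases on $t\ge r-2$ by Pascal's identity, vanishes at $t=\ell-1$, and the range $t<r-2$ is handled by $\binom{\ell-1}{r-1}\ge\ell-1$, which uses exactly the hypotheses $\ell>r\ge 2$. The small details are also in order: the map $E\mapsto E\setminus\{x\}$ is injective, the representatives avoid $x$ automatically, and a nonempty $\mathcal{J}$ forces $t\ge r-1$ while the empty case is absorbed by $g(0)\ge 0$. This is essentially the standard proof of the cited result, and it is a perfectly adequate self-contained substitute for the citation.
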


%%%%%%%%%%%%%%%%%%%%%%%%%%%%%%%%%%%%%%%%%%%%%%%%%%%%%%%%%%%%%%%%%%%%%%%%%%%%%
By Lemma \ref{le-2}, we denoted $\mathcal{C}_{4\ell'}$ as the $\ber C_{4\ell'}$ cycle
in $\mh_{1}'$. The defining vertices set and the defining hyperedges set of $\mathcal{C}_{4\ell'}$ are $U_{C}=\{u_{C,1},\ldots,u_{C,4\ell'}\}$ and $\mathcal{F}_{C}=\{F_{C,1},\ldots,F_{C,4\ell'}\}$, respectively. Then all  hyperedges in $\mh_{1}'$ will intersect with $U_{C}$; otherwise together with $\mathcal{F}_{C}$ and the connectedness of $\mh_{1}'$, we can find a $\ber P_{4\ell'+1}$, a contradiction with Lemma \ref{lem: contains P4l}. Now we construct a new hypergraph $\hat{\mh}_{1}=E(\mh_{1}')\setminus \mathcal{F}_{C}$. Define $\mathcal{F}_{i}=\{E\in \hat{\mh}_{1} : |E\cap U_{C}|=i\}$, $1\le i\le r$. Then $\hat{\mh}_{1}=\cup_{i=1}^{r}\mathcal{F}_{i}$. For $1\le j\le r-1$, denote $V_j=(\cup_{E\in \mathcal{F}_{j}}E)\setminus U_C$.
%\begin{equation*}
  %  V_{j}=\{v\in V(\hat{\mh}_{1})\setminus U_{C} : \text{there exists $E\in \mathcal{F}_{j}$ such that $v\in E$}\}.
%\end{equation*}

Suppose there are $E_1,E_2\in \cup_{i=1}^{r-2}\mathcal{F}_{i}$ such that $(E_{1}\setminus U_C)\cap (E_{2}\setminus U_C)\not=\emptyset$, say $u_0\in (E_{1}\setminus U_C)\cap (E_{2}\setminus U_C) $.
Since $|E_{i}\setminus U_{C}|\geq 2$ for $i=1,2$, there exists  $u_{0}'\in E_2\setminus (U_{C}\cup\{u_0\})$. Let $u_{j}\in E_1\cap U_{C}$ for some $1\le j\le 4\ell'$. Then
$$\{u_{C,j+1},F_{C,j+1},\ldots,u_{C,4\ell'},F_{C,4\ell'},u_{C,1},F_{C,1},\ldots,F_{C,j-1},u_{C,j},E_1,u_{0},E_{2},u_{0}'\}$$ is  a $\ber P_{4\ell'+1}$ in $\mh_{1}'$, a contradiction  with Lemma \ref{lem: contains P4l}. Hence for any $E_1,E_2\in \cup_{i=1}^{r-2}\mathcal{F}_{i}$, we have $(E_{1}\setminus U_C)\cap (E_{2}\setminus U_C)=\emptyset$. Similarly, for any $E_1\in \mathcal{F}_{r-1} $ and $E_2\in \cup_{i=1}^{r-2}\mathcal{F}_{i} $, we have $(E_{1}\setminus U_C)\cap (E_{2}\setminus U_C)=\emptyset$. Thus  $V_{j_1}\cap V_{j_2}=\emptyset$ for  $1\le j_1\neq j_2\le r-1$, which implies $$\sum_{j=1}^{r-1}|V_j|\leq |V(\hat{\mh}_1)|\leq  \sum_{j=1}^{r-1}|V_{j}|+|U_{C}| \,\text{and}\,
|\mathcal{F}_{i}|=\frac{|V_{i}|}{r-i}, i=1,2,\ldots,r-2.$$
So we have
\begin{equation}\label{equ: (1)}
    \begin{aligned}
        |\hat{\mh}_{1}|&=\sum_{i=1}^{r}|\mathcal{F}_{i}|=\sum_{j=1}^{r-2}\frac{|V_{j}|}{r-j}+|\mathcal{F}_{r-1}|+|\mathcal{F}_{r}|\\
        &\leq \sum_{j=1}^{r-2}\frac{|V_{j}|}{r-j}+|\mathcal{F}_{r-1}|+\binom{4\ell'}{r}=\sum_{j=1}^{r-2}\frac{|V_{j}|}{r-j}+\sum_{v\in V_{r-1}}d_{\hat{\mh}_{1}}(v)+\binom{4\ell'}{r}.
    \end{aligned}
\end{equation}Let $V_{r-1}=V_{\geq}\cup V_{<}$, where
\begin{equation*}
    \begin{aligned}
        &V_{\geq }=\left\{v\in V_{r-1} : d_{\hat{\mh}_{1}}(v)\geq \binom{2\ell'-2}{r-1}+1\right\},\\
        &V_{< }=\left\{v\in V_{r-1} : d_{\hat{\mh}_{1}}(v)\leq \binom{2\ell'-2}{r-1}\right\}.
    \end{aligned}
\end{equation*}
For each vertex $v\in V_{\geq}$, by Lemma \ref{lem: berge star}, there is a $\ber S_{2\ell'-1,v}$ with center $v$ (also the defining vertex) in $\hat{\mh}_{1}$. Denote  $V(\ber S_{2\ell'-1,v})=U_{C,v}\cup \{v\}$, where $U_{C,v}\subseteq U_{C}$.

\vskip.2cm
\begin{claim}\label{claim: V is finite}
 $|V_{\geq}|\leq n_{\ell,r}$, where $n_{\ell,r}=\binom{4\ell'}{2\ell'-1}$.
\end{claim}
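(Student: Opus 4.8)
The plan is to show that the assignment $v\mapsto U_{C,v}$ is an injection from $V_{\geq}$ into $\binom{U_C}{2\ell'-1}$; since $|U_C|=4\ell'$ this immediately yields $|V_{\geq}|\le\binom{4\ell'}{2\ell'-1}=n_{\ell,r}$. Two facts should be recorded first. (a) Since $v\in V_{\geq}\subseteq V_{r-1}$ and the sets $V_1,\dots,V_{r-1}$ are pairwise disjoint and disjoint from $U_C$, and $\mathcal{F}_1,\dots,\mathcal{F}_r$ partition $\hat{\mh}_1$, every hyperedge of $\hat{\mh}_1$ containing $v$ must lie in $\mathcal{F}_{r-1}$: it meets $U_C$ in exactly $r-1$ vertices and its unique vertex outside $U_C$ is $v$ itself. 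In particular the leaves of the Berge star $\ber S_{2\ell'-1,v}$ lie in $U_C$ (so $U_{C,v}\subseteq U_C$ is legitimate), and no hyperedge of $\hat{\mh}_1$ contains two distinct vertices of $V_{r-1}$. (b) $\mh_1'\subseteq\mh$ is $\ber 2P_{2\ell'}$-free, because $\mh$ is $\ber 2P_\ell$-free and $\ell=2\ell'$.

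Now suppose for contradiction that $v_1\neq v_2$ in $V_{\geq}$ satisfy $U_{C,v_1}=U_{C,v_2}=:W$, with $|W|=2\ell'-1$. For each $w\in W$ fix hyperedges $G_w^{(1)}\supseteq\{v_1,w\}$ and $G_w^{(2)}\supseteq\{v_2,w\}$ taken from the two Berge stars; these lie in $\hat{\mh}_1$, hence differ from every $F_{C,i}\in\mathcal{F}_C$, and by (a) no $G_w^{(s)}$ contains both $v_1$ and $v_2$. I would split into two cases according to whether $W$, viewed inside the Berge cycle $\mathcal{C}_{4\ell'}$, contains two cyclically consecutive vertices. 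If it does, say $u_{C,t},u_{C,t+1}\in W$, cut $\mathcal{C}_{4\ell'}$ at the ``antipodal'' defining hyperedges $F_{C,t}$ and $F_{C,t+2\ell'}$ into two arcs each spanning $2\ell'$ vertices; appending $v_1$ to the arc ending at $u_{C,t+1}$ via $G^{(1)}_{u_{C,t+1}}$ and appending $v_2$ to the arc ending at $u_{C,t}$ via $G^{(2)}_{u_{C,t}}$ produces two vertex-disjoint copies of $\ber P_{2\ell'}$. The $\mathcal{F}_C$-edges used by the two paths come from disjoint index intervals, and together with fact (a) all $4\ell'$ defining hyperedges are pairwise distinct, so this is a $\ber 2P_{2\ell'}$ in $\mh_1'$, contradicting (b).

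If instead $W$ is independent on $\mathcal{C}_{4\ell'}$, then the $2\ell'-1$ gaps between cyclically consecutive vertices of $W$ are all at least $2$ and sum to $4\ell'$, so at most two of them exceed $2$ (either one gap equals $4$, or two gaps equal $3$); deleting those oversized gaps from the cyclic list leaves a run of at least $\ell'-1$ consecutive gaps equal to $2$, which yields $w,w'\in W$ at arc-distance exactly $2\ell'-2$ along $\mathcal{C}_{4\ell'}$. The arc of $2\ell'-1$ vertices from $w$ to $w'$, extended by $v_1$ at $w$ via $G^{(1)}_{w}$ and by $v_2$ at $w'$ via $G^{(2)}_{w'}$, is a $\ber P_{2\ell'}$; the complementary arc, having $2\ell'+1$ vertices, is already a $\ber P_{2\ell'}$; these are vertex-disjoint and, as before, use disjoint families of defining hyperedges, again contradicting (b). Hence no such $v_1,v_2$ exist, the map is injective, and $|V_{\geq}|\le\binom{4\ell'}{2\ell'-1}=n_{\ell,r}$.

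I expect the only genuine work to be routine bookkeeping: verifying in each case that the two paths are vertex-disjoint (so all $4\ell'+2$ vertices are distinct, which is automatic since the $4\ell'$ vertices of $U_C$ get partitioned between the paths and $v_1\neq v_2$ lie outside $U_C$) and that their $4\ell'$ defining hyperedges are distinct (which reduces to the disjointness of the index intervals of the used $\mathcal{F}_C$-edges together with the observation in (a) that no hyperedge contains both $v_1$ and $v_2$). The one mildly delicate point is the counting argument in the independent case producing a run of $\ell'-1$ consecutive length-$2$ gaps; this uses $\ell'\ge r\ge 3$, hence $|W|=2\ell'-1\ge 5$, so the run is long enough and its two endpoints sit at the required arc-distance $2\ell'-2$.
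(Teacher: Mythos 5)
Your proof is correct and follows essentially the same route as the paper: pigeonhole on the leaf sets $U_{C,v}$ to obtain $v_1\neq v_2$ with $U_{C,v_1}=U_{C,v_2}$, then construct a $\ber 2P_{2\ell'}$ from arcs of $\mathcal{C}_{4\ell'}$ capped by star-edges at $v_1,v_2$ (your fact that no hyperedge of $\hat{\mh}_{1}$ contains both $v_1$ and $v_2$, so the two star-edges and the cycle edges are all distinct, is exactly what the paper uses implicitly), contradicting $\ber 2P_{\ell}$-freeness. The only deviation is a welcome streamlining of the non-consecutive case: where the paper enumerates the gap patterns (one gap of $4$, or two gaps of $3$ with further subcases on their position $t$), you note the gaps sum to $4\ell'$ so there is a run of at least $\ell'-1$ consecutive gaps equal to $2$, giving two common leaves at arc-distance exactly $2\ell'-2$, and the single construction ``short arc capped by $v_1$ and $v_2$, plus the complementary arc of $2\ell'+1$ vertices'' subsumes all of the paper's subcases at once.
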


\noindent{\bf Proof of Claim \ref{claim: V is finite}} Suppose  $|V_{\geq}|\geq n_{\ell,r}+1$. Then there exist $v_{1},v_{2}\in V_{\geq}$ such that $U_{C,v_1}=U_{C,v_2}$. If there are two vertices of $U_{C,v_1}$ whose indexes are sequential, say $u_{C,1},u_{C,4\ell'}\in U_{C,v_1}$, then we can choose $\{u_{C,1},v_1\}\subseteq E_{1}\in E(\ber S_{2\ell'-1,v_1})$ and $\{u_{C,4\ell'},v_2\}\subseteq E_{2}\in E(\ber S_{2\ell'-1,v_2})$ such that $\{v_1,E_1,u_{C,1},F_{C,1},u_{C,2}, \ldots,F_{C,2\ell'-1},u_{C,2\ell'}\}$ and $$\{v_2,E_2,u_{C,4\ell'},F_{C,4\ell'-1},u_{C,4\ell'-1},\ldots,F_{C,2\ell'+1}, u_{C,2\ell'+1}\}$$ forms a $\ber 2P_{2\ell'}$, a contradiction.

So we may assume that $U_{C,v_1}=\{u_{C,j_1},u_{C,j_2},\ldots,u_{C,j_{2\ell'-1}}\}$, where $j_1<j_2<\cdots<j_{2\ell'-1}$ with $j_{p+1}-j_{p}\ge 2$ for $1\le p\le 2\ell'-2$. Since $|U_C|=4\ell'$, we have $j_{p+1}-j_{p}\le 4$ for $1\le p\le 2\ell'-2$. We will consider two cases.
\par\noindent
\textbf{Case a: } There is $p$, say $p=1$, such that $j_2-j_1=4$.

In this case,  $ j_{p+1}-j_{p}=2$ for $2\le p\le 2\ell'-2$. Assume $j_1=1$. Then $U_{C,v_1}=\{u_{C,1},u_{C,5},\ldots,u_{C,4\ell'-3}\}$. Note that $2\ell'\ge r+7\ge10$. We have $u_{C,2\ell'-1}\in U_{C,v_1}$.
Choose  $\{u_{C,1},v_1\}\subseteq E_{1}\in E(\ber S_{2\ell'-1,v_1})$ and $\{u_{C,2\ell'-1},v_2\}\subseteq E_{2}\in E(\ber S_{2\ell'-1,v_2})$. Thus $\{v_1,E_1,u_{C,1},F_{C,1},$ $u_{C,2}, \ldots,F_{C,2\ell'-1},u_{C,2\ell'-1},E_2,v_2\}$ and $\{u_{C,2\ell'},F_{C,2\ell'},u_{C,2\ell'+1},\ldots,F_{C,4\ell'-1},u_{C,4\ell'}\}$ is a $\ber 2P_{2\ell'}$, a contradiction.
\par\noindent
\textbf{Case b: }$2\le j_{p+1}-j_{p}\le 3$ for $1\le p\le 2\ell'-2$.
\par
In this case, there are exactly $1\le s,t\le 2\ell'-2$ such that $j_{i+1}-j_{i}=3$ for $i\in\{s,t\}$ and $j_{i+1}-j_{i}=2$ for any $i\in\{1,\ldots,2\ell'-2\}\setminus\{s,t\}$.
Assume without loss of generality that $s=1$ and $j_1=1$. Then
 $U_{C,v_1}=\{u_{C,1},u_{C,4},u_{C,6},\ldots,u_{C,2t},u_{C,2t+3},u_{C,2t+5},\ldots,u_{C,4\ell'-1}\}$ and $2\leq t\leq \ell'+2$ by the symmetry of $\ber C_{4\ell'}$.
\par
If $t\leq \ell'-1$, then $u_{C,2\ell'+1}\in U_{C,v_1}$. So
there are $\{u_{C,1},v_1\}\subseteq E_1\in E(\ber S_{2\ell'-1,v_1})$ and $\{u_{C,2\ell'+1},v_2\}\subseteq E_2\in E(\ber S_{2\ell'-1,v_2})$. Thus $\{v_1,E_1,u_{C,1},F_{C,1},\cdots,F_{C,2\ell'-1},u_{C,2\ell'}\}$ and
$\{v_2,E_2,u_{C,2\ell'+1},F_{C,2\ell'+1},\cdots,F_{C,4\ell'-1},u_{C,4\ell'}\}$ form a $\ber 2P_{2\ell'}$, a contradiction.
\par
If $t\geq \ell'+1$, then $u_{C,2\ell'+2}\in U_{C,v_1}$. So
there are
 $\{u_{C,4},v_1\}\subseteq E_1\in E(\ber S_{2\ell'-1,v_1})$ and
$\{u_{C,2\ell'+2},v_2\}\subseteq E_2\in E(\ber S_{2\ell'-1,v_2})$. Thus $\{v_1,E_1,u_{C,4},F_{C,4},\ldots,F_{C,2\ell'+1},u_{C,2\ell'+2},E_2,v_2\}$ and
$\{u_{C,2\ell'+3},F_{C,2\ell'+3},u_{C,2\ell'+4},F_{C,2\ell'+4},\ldots,F_{C,4\ell'-1},u_{C,4\ell'},F_{C,4\ell'},u_{C,1},F_{C,1},\ldots,F_{C,2},u_{C,3}\}$
form a $\ber 2P_{2\ell'}$, a contradiction.
\par
Assume $t=\ell'$. Then $U_{C,v_1}=\{u_{C,1},u_{C,4},u_{C,6},\ldots,u_{C,2\ell'},u_{C,2\ell'+3},u_{C,2\ell'+5},\ldots,u_{C,4\ell'-1}\}.$ Choose  $\{u_{C,1},v_1\}\subseteq E_{1}\in E(\ber S_{2\ell'-1,v_1})$ and $\{u_{C,2\ell'+3},v_2\}\subseteq E_{2}\in E(\ber S_{2\ell'-1,v_2})$. Then $\{v_1,E_1,u_{C,1},F_{C,4\ell'},u_{C,4\ell'}, \ldots,F_{C,2\ell'+3},u_{C,2\ell'+3},E_2,v_2\}$ and $\{u_{C,2},F_{C,2},u_{C,3},\ldots,F_{C,2\ell'+1}, u_{C,2\ell'+2}\}$ forms a $\ber 2P_{2\ell'}$, a contradiction.\q
\par
By Claim \ref{claim: V is finite}, we have%leq n_{\ell,r}$ holds and for $v\in V_{r-1}$ we have $d_{U_{C}}(v)\leq \binom{4\ell'}{r-1}$, then
\begin{equation}\label{equ: 2}
    \begin{aligned}
        \sum_{v\in V_{r-1}}d_{\hat{\mh}_{1}}(v)&=\sum_{v\in V_{\geq}}d_{\hat{\mh}_{1}}(v)+\sum_{v\in V_{<}}d_{\hat{\mh}_{1}}(v)
        \leq n_{\ell,r}\binom{4\ell'}{r-1}+|V_{<}|\binom{2\ell'-2}{r-1}.
    \end{aligned}
\end{equation}
By (\ref{equ: (1)}) and (\ref{equ: 2}), we have
\begin{equation*}
\begin{aligned}
    &|\mh_{1}'|=|\mathcal{F}_{C}|+|\hat{\mh}_{1}|
    \leq \sum_{j=1}^{r-2}\frac{|V_{j}|}{r-j}+\binom{4\ell'}{2\ell'-1}\binom{4\ell'}{r-1}+|V_{<}|\binom{2\ell'-2}{r-1}+\binom{4\ell'}{r}+4\ell'\\
    &\leq \sum_{j=1}^{r-2}\frac{|V_{j}|}{r-j}+|V_{r-1}|\binom{2\ell'-2}{r-1}+\binom{4\ell'}{r}+\binom{4\ell'}{2\ell'-1}\binom{4\ell'}{r-1}+4\ell'\\
    &\leq \sum_{j=1}^{r-2}\frac{|V_{j}|}{2}+\left(|V(\hat{\mh}_{1})|-\sum_{j=1}^{r-2}|V_{j}|\right)\binom{2\ell'-2}{r-1}+\binom{4\ell'}{r}+\binom{4\ell'}{2\ell'-1}\binom{4\ell'}{r-1}+4\ell'\\
    &\leq |V(\hat{\mh}_{1})|\binom{2\ell'-2}{r-1}+\binom{4\ell'}{r}+\binom{4\ell'}{2\ell'-1}\binom{4\ell'}{r-1}+4\ell'\\
    &\leq|V(\mh_{1}')|\binom{2\ell'-2}{r-1}+\binom{4\ell'}{r}+\binom{4\ell'}{2\ell'-1}\binom{4\ell'}{r-1}+4\ell'.
\end{aligned}
\end{equation*}
Finally, when $n$ is large enough, we have
\begin{equation*}
    \begin{aligned}
        |\mh|&\leq\left(\binom{2\ell'-1}{r-1}-1\right)|V(\mh)\setminus V(\mh')|+|\mh_{1}'|+|\mh_{2}'|\\
        &\leq \left(\binom{2\ell'-1}{r-1}-1\right)|V(\mh)\setminus V(\mh')|+\binom{2\ell'-2}{r-1}|V(\mh_{1}')|+\frac{|V(\mh_{2}')|}{2\ell'}\binom{2\ell'}{r} \\
        &+\binom{4\ell'}{r}+\binom{4\ell'}{2\ell'-1}\binom{4\ell'}{r-1}+4\ell'\\
        &\leq \left(\binom{2\ell'-1}{r-1}-1\right)n+\binom{4\ell'}{r}+\binom{4\ell'}{2\ell'-1}\binom{4\ell'}{r-1}+4\ell'\\
        &< \binom{2\ell'-1}{r-1}(n-2\ell'+1)+\binom{2\ell'-1}{r}+\binom{2\ell'-1}{r-2},
    \end{aligned}
\end{equation*}
a contradiction with  (\ref{eq-0}).

\noindent
\textbf{Case 2. } $\ell_0= 4\ell'-1$.

By Lemma \ref{2-11}, Theorem \ref{thm: connect Pl} and $\ell\geq r+7$, we have
    \begin{equation*}
        \begin{aligned}
            |\mh_{1}'|&\leq \mathrm{ex}_r^{con}(|V(\mh_{1}')|,\ber P_{\ell_0+1})= \mathrm{ex}_r^{con}(|V(\mh_{1}')|,\ber P_{4\ell'})\\
            &= \binom{2\ell'-1}{r-1}(|V(\mh_{1}')|-2\ell'+1)+\binom{2\ell'-1}{r}+\binom{2\ell'-1}{r-2}.
        \end{aligned}
    \end{equation*}Since $\mh_{2}'$ is $\ber P_{\ell}$-free, by Theorems \ref{thm: turan of BPl} (1), we have
     $|\mh_{2}'|\leq  \frac{|V(\mh_{2}')|}{2\ell'}\binom{2\ell'}{r}.$
   So we have
    \begin{equation*}
        \begin{aligned}
            |\mh|&\leq \left(\binom{2\ell'-1}{r-1}-1\right)|V(\mh)\setminus V(\mh')|+\binom{2\ell'-1}{r-1}|V(\mh_{1}')|+\frac{|V(\mh_{2}')|}{2\ell'}\binom{2\ell'}{r} \\
            &+\binom{2\ell'-1}{r}+\binom{2\ell'-1}{r-2}-(2\ell'-1)\binom{2\ell'-1}{r-1}\\
            &\leq \binom{2\ell'-1}{r-1}(n-2\ell'+1)+\binom{2\ell'-1}{r}+\binom{2\ell'-1}{r-2},
        \end{aligned}
    \end{equation*}
   and we finished the proof of Theorem \ref{thm: berge linear forest} for $k=2$ by combining with (\ref{eq-0}).

\QEDopen

\section{Proof of Theorem \ref{thm: berge linear forest}}
In this section, we prove  Theorem  \ref{thm: berge linear forest}.
We first give some notations.

Let  $V_0\subseteq V(\mh)$ and   $u\in V(\mh)\setminus V_0$.  We call %$u$   a {\bf common neighbour}  of $V_0$ if for every $v\in V_0$, there exists  $E_v\in E(\mh)$  such that $\{u,v\}\subseteq E_v$.
$u$  a  {\bf Berge-common neighbour} of $V_0$ if  for every two vertices $v_1,v_2\in V_0$,  there exist two distinct hyperedges $E_1,E_2\in \mh$ such that $\{v_1,u\}\subseteq E_1$ and $\{v_2,u\}\subseteq E_2$. Denote $$BCN(V_0)=\{u\in V(\mh)\setminus V_0: u \mbox{~is a Berge-common neighbour of~} V_0\}.$$
%And we say $u$ is a \emph{Berge-common neighbour} of $V_0$ for every two vertices $u_1,u_2\in V_0$, there exist two distinct hyperedges $E_1,E_2\in E(\mh)$, with.
In the following, we set $\ell'=\lchuto$ for convenience. Assume $k\geq 2$, $r\ge 2$, $\ell'\geq r$, $2\ell'\geq r+7$ and $n$ is sufficiently large. For  $V_0\subseteq V(\mh)$ and  $v\in V(\mh)\setminus V_0$, denote $d_{\mh,V_0}(v)=|\{E\in \mh:v\in E,E\cap V_0\not=\emptyset\}|$. Then $d_{\mh}(v)=d_{\mh,V(\mh)}(v)$.
%a proof for the key Lemma \ref{lem: enough Berge common} which will help us deal with the upper bound for $k\geq 3$.

Let $\mh$ be an $r$-uniform $\ber kP_{\ell}$-free hypergraph with size $\mathrm{ex}_r(n,\ber kP_{\ell})$. By (\ref{3-1}), we have
 \begin{align}\label{3-2}
|E(\mh)|&\ge {k\ell'-1\choose r-1}(n-k\ell'+1)+{k\ell'-1\choose r}+\mathbb{I}_\ell\cdot\binom{k\ell'-1}{r-2}\\
&= \binom{k\ell'-1}{r-1}n+O_{r,k,\ell}(1)\nonumber.
\end{align}

We will show that, for $k,r\ge 2$,  $\mathrm{ex}_r(n,\ber kP_{\ell})\le{k\ell'-1\choose r-1}(n-k\ell'+1)+{k\ell'-1\choose r}+\mathbb{I}_\ell\cdot\binom{k\ell'-1}{r-2}$ by induction on $k+r$.
By Theorems \ref{thm:kPl} and  Section 3,
 we may assume $k\geq 3$ and $r\geq 3$. Then $r+k\geq 6$. By Theorem \ref{thm: turan of BPl} (1) and (\ref{3-2}), there exists a copy of $\ber P_{\ell}$ in $\mh$. We first have some lemmas.

 \begin{lemma}\label{lem: enough Berge common}
     There exists a constant $\alpha=\alpha(r,k,\ell)>0$ such that for every copy of Berge-$P_{\ell}$ in $\mathcal{H}$, say $\mathcal{P}$, there is $V_0\subseteq DV_\mh(\mathcal{P})$ with $|V_0|=
     \ell'$ and $|BCN(V_0)|\ge\alpha n$.
 \end{lemma}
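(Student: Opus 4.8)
The plan is to exploit $\ber kP_\ell$-freeness twice --- once through the induction hypothesis on $k$ to force $\mathcal{P}$ to be heavily attached to the rest of $\mathcal{H}$, and once through the linear Tur\'an bound for Berge-path-free hypergraphs to extract a common structure --- and then finish by a pigeonhole over the constantly many $\ell'$-subsets of $Z:=DV_\mathcal{H}(\mathcal{P})$ (note $|Z|=\ell+1$). First I would observe that $\mathcal{H}-Z$ is $\ber(k-1)P_\ell$-free: any copy of $\ber(k-1)P_\ell$ in $\mathcal{H}-Z$ is vertex-disjoint from $\mathcal{P}$, so together with $\mathcal{P}$ it would yield a $\ber kP_\ell$ in $\mathcal{H}$. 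Since $k\ge 3$ here, the upper bound for $k-1\ge 2$ is already available --- from Section~3 if $k-1=2$ and from the induction hypothesis on $k+r$ if $k-1\ge 3$ --- giving $|E(\mathcal{H}-Z)|\le\binom{(k-1)\ell'-1}{r-1}(n-\ell-1)+O_{r,k,\ell}(1)$. Comparing with the lower bound~(\ref{3-2}) for $|E(\mathcal{H})|$, at least $c_1 n-O(1)$ hyperedges of $\mathcal{H}$ meet $Z$, where $c_1:=\binom{k\ell'-1}{r-1}-\binom{(k-1)\ell'-1}{r-1}>0$.

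Next I would turn ``many hyperedges meet $Z$'' into ``many vertices robustly see $Z$''. A Berge path of length $k\ell+k-1$ contains $k$ internally disjoint Berge subpaths of length $\ell$, so $\ber kP_\ell$-freeness implies $\ber P_{k\ell+k-1}$-freeness, and by Theorem~\ref{thm: turan of BPl}(1) every subhypergraph of $\mathcal{H}$ on $m$ vertices has $O(m)$ hyperedges. Letting $Y$ be the set of vertices outside $Z$ that lie in some hyperedge meeting $Z$, every hyperedge meeting $Z$ is contained in $Z\cup Y$, so $c_1 n-O(1)\le |E(\mathcal{H}[Z\cup Y])|\le O(|Z\cup Y|)=O(|Y|)$, hence $|Y|=\Omega(n)$. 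Pigeonholing the $c_1 n-O(1)$ hyperedges meeting $Z$ over the at most $2^{\ell+1}$ traces $E\cap Z$, and re-feeding the ``link'' hypergraphs of the heavy traces into the same $O(m)$ bound to control multiplicities, the goal is to obtain a constant $c_2>0$ and at least $c_2 n$ vertices $u\notin Z$, each lying in $\ell'$ pairwise distinct hyperedges $E_1^u,\dots,E_{\ell'}^u$ admitting distinct representatives $w_i^u\in E_i^u\cap Z$. For such $u$, put $V_0(u)=\{w_1^u,\dots,w_{\ell'}^u\}\subseteq Z$: then $|V_0(u)|=\ell'$ and $u\in BCN(V_0(u))$, because for any $w_i^u,w_j^u\in V_0(u)$ the distinct hyperedges $E_i^u\supseteq\{u,w_i^u\}$ and $E_j^u\supseteq\{u,w_j^u\}$ witness the definition. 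Since $V_0(u)$ takes at most $\binom{\ell+1}{\ell'}$ values, some fixed $\ell'$-set $V_0\subseteq Z$ equals $V_0(u)$ for at least $c_2 n/\binom{\ell+1}{\ell'}$ of these vertices, all of which lie in $BCN(V_0)$; taking $\alpha(r,k,\ell)=c_2/\binom{\ell+1}{\ell'}$ completes the proof.

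The main obstacle is the second step --- upgrading ``$\Omega(n)$ hyperedges meet $Z$'' to ``$\Omega(n)$ vertices each meet $\ell'$ vertices of $Z$ through $\ell'$ distinct hyperedges''. This strengthening is genuinely required: membership $u\in BCN(V_0)$ asks that \emph{every} pair in $V_0$ be separated by two distinct hyperedges through $u$, so plain adjacency of $u$ to $\ell'$ vertices of $Z$ is not enough (all of $u$'s connections to $Z$ could funnel through very few hyperedges), and a priori the hyperedges meeting $Z$ could be spread thinly over the vertices outside $Z$, or could all have small trace lying in a fixed subset of $Z$ of size $<\ell'$. Ruling out these degeneracies --- by repeatedly applying the $O(m)$-type bound for Berge-path-free hypergraphs to the relevant link hypergraphs --- is where the real work of the lemma lies.
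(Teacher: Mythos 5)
Your frame agrees with the paper's in outline: work relative to $D:=DV_\mh(\mathcal{P})$, invoke the induction hypothesis on $k$ (or Section 3 when $k-1=2$) to control the hyperedges avoiding $D$, and finish with a pigeonhole over the $\binom{\ell+1}{\ell'}$ subsets of $D$ of size $\ell'$; your closing observation that $\ell'$ pairwise distinct hyperedges through $u$ with distinct representatives in $D$ certify $u\in BCN(V_0(u))$ is correct, and so is your step~1 count that at least $c_1n-O(1)$ hyperedges meet $D$. But there is a genuine gap, and it sits exactly where you yourself place the ``main obstacle'': the assertion that $\Omega(n)$ vertices outside $D$ admit such a system of distinct representatives is only announced (``the goal is to obtain\dots'') and never proved. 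That assertion \emph{is} the lemma, and the tools you offer for it --- pigeonholing the $c_1n$ hyperedges over the $2^{\ell+1}$ traces and ``re-feeding'' link hypergraphs into the linear bound of Theorem \ref{thm: turan of BPl}(1) --- cannot close it, because the issue is one of exact constants, not orders of magnitude: a hyperedge with $|E\cap D|=t\le r-2$ contributes nothing toward your SDRs unless its $(r-t)$-set outside $D$ has many extensions into $D$, and a priori the classes $\mathcal{S}_t=\{E:|E\cap D|=t\}$ with small $t$ could absorb the entire excess $c_1n$ from step~1.

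What the paper actually does, and what your sketch lacks, is the following. For each $t\le r-2$ it bounds the link $\{E\setminus D:E\in\mathcal{S}_t\}$ by the induction hypothesis for $\ber (k-1)P_\ell$ in $(r-t)$-uniform hypergraphs, with the exact constant $\binom{(k-1)\ell'-1}{r-t-1}$ rather than an anonymous $O(m)$, and then runs a dichotomy on the number $c_t(E)$ of extensions of each link edge into $D$. If $\Omega(n)$ link edges have $c_t(E)\ge\binom{\ell'}{t}$, a pigeonhole over the constantly many trace hypergraphs on $D$ produces a fixed trace hypergraph shared by $\Omega(n)$ link edges, from which $V_0$ is built directly (after discarding the few extensions in which two trace vertices have degree one --- the same degeneracy you must handle to get \emph{distinct} witnessing hyperedges). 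Otherwise $|\mathcal{S}_t|\le\binom{(k-1)\ell'-1}{r-t-1}\left(\binom{\ell'}{t}-\tfrac12\right)n+O(1)$ for all $1\le t\le r-2$, and the Vandermonde identity $\binom{k\ell'-1}{r-1}=\sum_{t}\binom{(k-1)\ell'-1}{r-1-t}\binom{\ell'}{t}$ forces $|\mathcal{S}_{r-1}|\ge\left(\binom{\ell'}{r-1}+\alpha_0\right)n-O(1)$ with $\alpha_0=\tfrac12\sum_{t=1}^{r-2}\binom{(k-1)\ell'-1}{r-t-1}$; since a vertex seeing at most $\ell'-1$ vertices of $D$ robustly carries at most $\binom{\ell'-1}{r-1}$ such hyperedges plus at most $\ell'$ degenerate ones, Lemma \ref{lem: the ineuqlaity in sec 3 (2)} yields $\Omega(n)$ vertices with at least $\ell'$ robust neighbours in $D$, and only then does your final pigeonhole apply. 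Note that $\binom{\ell'-1}{r-2}-\ell'<0$ already for $r=3$, so the slack $\alpha_0$ coming from the factor $\tfrac12$ is genuinely needed; an argument that ignores these constants, as yours does, cannot succeed. Since none of this multiplicity dichotomy, exact-constant bookkeeping, or numerical input appears in your proposal, it is an outline of the strategy rather than a proof.
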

 \begin{proof} Let $\mathcal{P}$ be  a copy of $\ber P_{\ell}$ in $\mh$. Set $D=DV_\mh(\mathcal{P})$ for short. Then $|D|=\ell+1$. For $0\le t\le r$, we set
         $$\mathcal{S}_t=\{E\in \mathcal{H}:|E\cap D|=t\}.$$
     Then $\mathcal{H}=\bigcup_{t=0}^{r}\mathcal{S}_t$ and $|\mathcal{S}_r|\leq \binom{\ell+1}{r}$.
     For $0\le t\le r-2$, we set $\mh_t=\{E\setminus D: E\in \mathcal{S}_t\}$ and $DE_\mh(\mathcal{P})|_{\mh_t}=\{E\setminus D: E\in DE_\mh(\mathcal{P})\cap \mathcal{S}_t\}$. Then $\mh_t$ is  $(r-t)$-uniform for $0\le t\le r-2$. Also $\mh_t':=\mh_t-DE_\mh(\mathcal{P})|_{\mh_t}$ is $\ber (k-1)P_{\ell}$-free for $0\le t\le r-2$; otherwise we can find a copy of $\ber kP_{\ell}$ in $\mh$.
     %Thus when $k=2$ we have
     %\begin{equation*}
      %   \begin{aligned}
       %      |E(\mh_t)|\leq & ex_{r-t}(n-\ell-1,\ber P_{\ell})+|DE_\mh(\mathcal{P})|_{\mh_t}|\\
        %     \leq & \binom{\ell}{r-t}\frac{n-\ell-1}{\ell}+\ell=\frac{1}{\ell}\binom{\ell}{r-t}n+O_{r,t,\ell}(1).
   %      \end{aligned}
    % \end{equation*}
    By the induction hypothesis, for $0\le t\le r-2$, we have
     \begin{equation}\label{lemm4-2}
         \begin{aligned}
             |\mh_t|\leq & \mathrm{ex}_{r-t}(n- \ell-1,\ber (k-1)P_{\ell})+|DE_\mh(\mathcal{P})|_{\mh_t}|\\
             \leq & \binom{(k-1)\ell'-1}{r-t-1}(n-(k-1)\ell'+1)+\binom{(k-1)\ell'-1}{r-t}+\binom{(k-1)\ell'-1}{r-t-2}+\ell\\
             \leq &  \binom{(k-1)\ell'-1}{r-t-1}n+O_{r,k,t,\ell}(1).
         \end{aligned}
     \end{equation}
    Note that    $|\mathcal{S}_0|=|\mh_0'|.$ For $1\le t\le r-2$ and $E\in \mh_t'$, define
          \begin{equation*}
         \begin{aligned}
         &C_t(E)=\{E'\in \mathcal{S}_t : E\subseteq E'\},~~c_t(E)=|C_t(E)|,\\
             &\mathcal{E}_{t,1}=\left\{E\in \mh_t' : c_t(E)\geq \binom{\ell'}{t}\right\},\\
             &\mathcal{E}_{t,2}=\left\{E\in \mh_t' : c_t(E)\leq \binom{\ell'}{t}-1\right\}.
         \end{aligned}
     \end{equation*}
     Then we have the following  claim.
     \begin{claim}\label{claim: et1 is small}
        For any $1\le t\le r-2$,  we may assume that $|\mathcal{E}_{t,1}|\leq \eta n$, where  $$ \eta=\frac{1}{2}\min_{t\in\{1,\ldots,r-2\}}\left\{\frac{\binom{(k-1)\ell'-1}{r-t-1}}{\binom{\ell+1}{t}-\binom{\ell'}{t}+1}\right\} >0.$$
     \end{claim}

    % \begin{claim}\label{claim: claim2}
         %Let $ \eta=\min_{t=1,2,\ldots,r-2}\{\frac{1}{2}\frac{\binom{(k-1)\ell'-1}{r-t-1}}{\binom{\ell+1}{t}-\binom{\ell'}{t}+1}\} >0$, and we may assume that $|\mathcal{E}_{t,1}|\leq \eta n$ for $t=1,2,\ldots,r-2$.
    % \end{claim}
    \noindent{\bf Proof of Claim \ref{claim: et1 is small}}
    Assume there is $1\le t\le r-2$ such that
 $|\mathcal{E}_{t,1}|>\eta n$. Let
  $E\in \mathcal{E}_{t,1}$ and set $C_t(E)=\{E_{1}',E_{2}',\ldots,E_{c_t(E)}'\}$.
   We construct a hypergraph $\mg_{t,E}=\{E_{j}'\setminus E : j=1,2,\ldots,c_t(E)\}$. Then $\mg_{t,E}$ is a $t$-graph. We use $N_{t,\ell}$ to denote the number of
   $t$-graphs with vertex set $D$. Since $|D|=\ell+1$, $|\mathcal{E}_{t,1}|>\eta n$ and $n$ is large enough, by the Pigeonhole Principle,  there exist $\mathcal{B}_{t,1}\subseteq \mathcal{E}_{t,1}$ with $|\mathcal{B}_{t,1}|\geq \frac{\eta n}{N_{t,\ell}}$ such that for any $E_1,E_2\in\mathcal{B}_{t,1}$ we have $\mg_{t,E_1}=\mg_{t,E_2}$. Note that $\mathcal{B}_{t,1}$ is also a $(r-t)$-hypergraph. Since  $\mh_t'$ is $\ber (k-1)P_{\ell}$-free, $\mathcal{B}_{t,1}$ is $\ber (k-1)P_{\ell}$-free. We have that $|V(\mathcal{B}_{t,1})|\ge
     \frac{\eta n}{2 \binom{(k-1)\ell'-1}{r-t-1}N_{t,\ell}}$; otherwise, by $\mathcal{B}_{t,1}$ being $\ber (k-1)P_{\ell}$-free and  the induction hypothesis, we have
     $$|\mathcal{B}_{t,1}|\leq \mathrm{ex}_{r-t}(|V(\mathcal{B}_{t,1})|,\ber (k-1)P_{\ell})\leq \frac{\eta n}{2 N_{t,\ell}}+O(1)<\frac{\eta n}{N_{t,\ell}},$$
      a contradiction.
      %Let $$U_{r-t}=\{v\in V(\mh) : \text{there exists $E\in\mathcal{B}$ such that $v\in E$}\}.$$
      When $t\geq 2$, for each $E\in \mathcal{B}_{t,1}$, we define
     $$S_{t,E}^{0}=\{E'\in \mathcal{S}_{t} : \text{$E\subseteq E'$ and there exist $u_1,u_2\in E'\cap D$ such that $d_{\mg_{t,E}}(u_i)=1,i=1,2$ } \},$$
     and set $S_{1,E}^{0}=\emptyset$. Then we have $|S_{t,E}^{0}|\leq \ell'$ for $t\geq 2$ by $|D|=\ell+1$ and $|S_{1,E}^{0}|=0$.
          For $3\leq t\leq r-2$ and $E\in \mathcal{E}_{t,1}$, we have
     $$|\mg_{t,E}|-|S_{t,E}^{0}|=c_t(E)-|S_{t,E}^{0}|\geq\binom{\ell'}{t}-\ell'\geq \binom{\ell'-1}{t},$$
     which implies $|V(\mg_{t,E}\setminus S_{t,E}^{0})|\ge \ell'-1$ by  $\ell'\ge r$ and $2\ell'\ge r+6$. Then there is $V_0'\subseteq V(\mg_{t,E}\setminus S_{t,E}^{0})$ with $|V_0'|= \ell'-1$ such that $V(\mathcal{B}_{t,1})\subseteq BCN(V_0')$. Since  $|V(\mg_{t,E})|\ge \ell'$ by $|\mg_{t,E}|=c_t(E)\geq \binom{\ell'}{t}$, there is $v_0\in V(\mg_{t,E})\setminus V_0'$ such that $V(\mathcal{B}_{t,1})\subseteq BCN(V_0)$, where $V_0=V_0'\cup\{v_0\}$. Then  $|BCN(V_0)|\ge |V(\mathcal{B}_{t,1})|\ge \frac{\eta n}{2 \binom{(k-1)\ell'-1}{r-t-1}N_{t,\ell}}$.

     Now we consider the case $t=2$. If $|S_{2,E}^{0}|=\ell'$, by the definition of $S_{2,E}^{0}$, $S_{2,E}^{0}$ is a match  in $D$. For each edge in $S_{2,E}^{0}$, we select one endpoint and thus we have  $V_0\subseteq V(S_{2,E}^{0})$ with $|V_0|= \ell'$ such that $V(\mathcal{B}_{2,1})\subseteq BCN(V_0)$. If $|S_{2,E}^{0}|\leq \ell'-1$, we have
     $$|\mg_{2,E}|-|S_{2,E}^{0}|\geq\binom{\ell'}{2}-(\ell'-1)\geq \binom{\ell'-1}{2}\ge \ell'$$ by $\ell'\ge 5$.
     So we have  $V_0\subseteq V(\mg_{2,E}\setminus S_{2,E}^{0})$ with $|V_0|= \ell'$ such that $V(\mathcal{B}_{2,1})\subseteq BCN(V_0)$.

     Finally, we consider the case $t=1$. Since $|\mg_{1,E}|\geq \binom{\ell'}{t}$,  $|V(\mg_{1,E})|\ge \ell'$. Then there is $V_0\subseteq V(\mg_{t,E})$ with $|V_0|= \ell'$ such that $V(\mathcal{B}_{1,1})\subseteq BCN(V_0)$.
     \par
     Above all, set $\alpha=\min_{t=1,\ldots,r-2}\{\frac{\eta}{2\binom{(k-1)\ell'-1}{r-t-1} N_{t,\ell}}\}>0$. If there is $1\le t\le r-2$ such that
 $|\mathcal{E}_{t,1}|>\eta n$, by the  discussion above, Lemma \ref{lem: enough Berge common} holds.\q

     %For the $\ber P_{\ell}$ $\mathcal{P}$ in $\mh$, if there exist $t\in\{1,2,\ldots,r-2\}$ such that $|\mathcal{E}_{t,1}|>\eta n$, we can find $\ell'$ vertices in $D$ and they have $|U_{r-t}|>\frac{\eta n}{2 \binom{(k-1)\ell'-1}{r-t-1}N_{t,\ell}}\geq \alpha n$ $\ber$common neighbors, otherwise Claim \ref{claim: claim2} holds. \q
Now we complete the proof of Lemma \ref{lem: enough Berge common}.
     By Claim \ref{claim: et1 is small} and (\ref{lemm4-2}), for $1\le t\le r-2$, we have
     \begin{equation}\label{lemma4-1}
     \begin{aligned}
     |\mathcal{S}_{t}|&=\sum_{E\in \mh_t}c_t(E)\leq |\mathcal{E}_{t,1}|\binom{\ell+1}{t}+|\mathcal{E}_{t,2}|\left(\binom{\ell'}{t}-1\right)\\
     & \leq \binom{\ell+1}{t}|\mathcal{E}_{t,1}|+(|\mh_t|-|\mathcal{E}_{t,1}|)\left(\binom{\ell'}{t}-1\right)\\
     &\leq  \binom{\ell+1}{t}|\mathcal{E}_{t,1}|+\left(\binom{(k-1)\ell'-1}{r-t-1}n-|\mathcal{E}_{t,1}|\right)\left(\binom{\ell'}{t}-1\right)+O_{r,k,t,\ell}(1)\\
     &=\left(\binom{\ell+1}{t}-\binom{\ell'}{t}+1\right)|\mathcal{E}_{t,1}|+\binom{(k-1)\ell'-1}{r-t-1}\left(\binom{\ell'}{t}-1\right) n+O_{r,k,t,\ell}(1)\\
     &\leq \left(\binom{\ell+1}{t}-\binom{\ell'}{t}+1\right)\eta n+\binom{(k-1)\ell'-1}{r-t-1}\left(\binom{\ell'}{t}-1\right) n+O_{r,k,t,\ell}(1)\\
     &\leq \binom{(k-1)\ell'-1}{r-t-1}\left(\binom{\ell'}{t}-\frac{1}{2}\right) n+O_{r,k,t,\ell}(1).
     \end{aligned}
     \end{equation}
     Thus when $k\geq 3$, by (\ref{3-2}) and (\ref{lemma4-1}), we have
     \begin{equation}\label{4-41}
         \begin{aligned}
             |\mathcal{S}_{r-1}|&=|\mh|-\sum_{t=0}^{r-2}|\mathcal{S}_t|-|\mathcal{S}_r|\\
             &\geq \binom{k\ell'-1}{r-1}n+O_{r,k,\ell}(1)-\left(\binom{(k-1)\ell'-1}{r-0-1}+\sum_{t=1}^{r-2}\binom{(k-1)\ell'-1}{r-t-1}\left(\binom{\ell'}{t}-\frac{1}{2}\right)\right)n\\
            & -O(r,k,t,\ell)(1)-\binom{\ell+1}{r}\\
            &\geq\left(\binom{k\ell'-1}{r-1}-\sum_{t=0}^{r-2}\binom{(k-1)\ell'-1}{r-t-1}\binom{\ell'}{t}\right)n+\alpha_0 n-O_{r,k,t,\ell}(1),
         \end{aligned}
     \end{equation}
      where $
     \alpha_0=\frac{1}{2}\sum_{t=1}^{r-2}\binom{(k-1)\ell'-1}{r-t-1}>0$.
          Fix a vertex $v\in V(\mh)\setminus D$, we can construct an $(r-1)$-uniform hypergraph $\mathcal{D}_v=\{E\setminus \{v\}: E\in \mathcal{S}_{r-1} \text{~and $v\in E$}\}$. %For $u\in D$, recall that $E_{\mathcal{D}_v}(u)$ denote the collection of hyperedges of $\mathcal{D}_v$ that contains $u$.
    We set $$S_{r-1,v}^0=\{E\in \mathcal{S}_{r-1}: \text{~$v\in E$ and there exist $u_1,u_2\in E\cap D$ with $d_{\mathcal{D}_v}(u_i)=1$, $i=1,2$}\},$$
    and $$\mathcal{S}^+_{r-1,v}=\{E\in \mathcal{S}_{r-1}\setminus\mathcal{S}_{r-1,v}^0:\text{$v\in E$}\}.$$
    For every two vertices $u_1,u_2\in \left(\bigcup_{E\in S^+_{r-1,v}}E\right)\cap D$, there exist two different hyperedges $E_1,E_2\in E(\mathcal{D}_{v})$ with $\{u_i,v\}\subseteq E_i$ for $i=1,2$, which implies $v\in BCN(\{u_1,u_2\})$.

   Denote $\mathcal{S}_{r-1}^0=\bigcup_{v\in V(\mh)\setminus D} S_{r-1,v}^0$ and $\mathcal{S}^+_{r-1}=\bigcup_{v\in V(\mh)\setminus D}\mathcal{S}_{r-1,v}^+$. Then $S^+_{r-1}=\mathcal{S}_{r-1}\setminus S_{r-1}^0$. Note that for every $v\in V(\mh)\setminus D$,  $|S_{r-1,v}^0|\leq \ell'$. Then $|\mathcal{S}_{r-1}^0|\leq \ell' n$ and $|\mathcal{S}_{r-1}^+|\geq |\mathcal{S}_{r-1}|-\ell' n$.

     For a vertex $v\in V(\mh)\setminus D$, we set $$N_{\mathcal{S}_{r-1}^+,D}(v)=\{u\in D: \text{there exist $E\in \mathcal{S}_{r-1}^+$ such that $\{u,v\}\subseteq E$}\},$$ and  $d_{\mathcal{S}_{r-1}^+,D}(v)=|N_{\mathcal{S}_{r-1}^+,D}(v)|$.
    Let $B=\{v\in V(\mh)\setminus D:d_{\mathcal{S}_{r-1}^+,D}(v)\geq \ell'\}$. Then we have
     \begin{equation*}
         \begin{aligned}
             \left|\mathcal{S}_{r-1}^+\right|&\leq \binom{\ell+1}{r-1}|B|+\binom{\ell'-1}{r-1}(n-|B|)\\
             &= \binom{\ell'-1}{r-1}n+\left( \binom{\ell+1}{r-1}-\binom{\ell'-1}{r-1} \right)|B|.
         \end{aligned}
     \end{equation*}
     Note that  $\ell'\geq r\geq 3$ and $k\geq 3$. By (\ref{4-41}) and $|\mathcal{S}_{r-1}^+|\geq |\mathcal{S}_{r-1}|-\ell' n$, we have
      \begin{equation*}
         \begin{aligned}
             &\left( \binom{\ell+1}{r-1}-\binom{\ell'-1}{r-1} \right)|B|\geq \left| \mathcal{S}_{r-1}^+\right|-\binom{\ell'-1}{r-1}n\\
             &\geq \left(\binom{k\ell'-1}{r-1}-\sum_{t=0}^{r-2}\binom{(k-1)\ell'-1}{r-t-1}\binom{\ell'}{t}-\ell'-\binom{\ell'-1}{r-1} \right)n
             +\alpha_0 n-O_{r,k,\ell}(1)   \\
             &= \left(\binom{\ell'-1}{r-2}-\ell'+\alpha_0 \right)n-O_{r,k,\ell}(1).
         \end{aligned}
     \end{equation*}
     By Lemma \ref{lem: the ineuqlaity in sec 3 (2)}, there exists $\beta=\beta(r,k,\ell)>0$ such that $|B|\geq \beta n$.
     Since there are $\binom{\ell+1}{\ell'}$ sets with size $\ell'$ in $D$, there are $B_0\subseteq B$ with  $|B_0|\ge\frac{\beta }{\binom{\ell+1}{\ell'}}n$ and $V_0\subseteq D$ with $|V_0|=\ell'$ such that for every $v\in B_0$ and $u\in V_0$, there exists hyperedges $E\in \mathcal{S}_{r-1}^+$ with $\{u,v\}\subseteq E$.
So for every $v\in B_0$ and $u\in V_0$, $u\in \left(\bigcup_{E\in \mathcal{S}_{r-1,v}^+}E\right)\cap V_0$.
     Then for every two vertices $u_1,u_2\in V_0$ and every vertex $v\in B_0$, we have $u_1,u_2\in  \left(\bigcup_{E\in \mathcal{S}_{r-1,v}^+}E\right)\cap V_0$. Thus $B_0\subseteq BCN(V_0)$.
    By setting $\alpha=\frac{\beta}{\binom{\ell+1}{\ell'}}$, we finished the proof.
 \end{proof}

\begin{lemma}\label{lem: delete one vertex remains many P2l}
     For every vertex $v\in V(\mathcal{H})$, $\mh-v$ contains a copy of $\ber (k-1)P_{\ell}$.
 \end{lemma}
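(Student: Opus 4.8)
The plan is to argue by contradiction, squeezing $d_{\mh}(v)$ between a lower bound coming from the induction hypothesis applied to $\mh-v$ and an upper bound coming from the induction hypothesis applied to the link of $v$.

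Assume some $v\in V(\mh)$ is such that $\mh-v$ contains no copy of $\ber(k-1)P_{\ell}$. First I would use the induction hypothesis on $\mh-v$: it is a $\ber(k-1)P_{\ell}$-free $r$-graph on at most $n$ vertices, $(k-1)+r<k+r$, and $k-1\ge2$ (the instance $k-1=2$ being covered by Section 3, the instance $k-1\ge3$ by the induction hypothesis, in both cases with $r\ge3$). Hence $|\mh-v|\le\mathrm{ex}_r(n,\ber(k-1)P_{\ell})=\binom{(k-1)\ell'-1}{r-1}n+O_{r,k,\ell}(1)$, and combining with the lower bound (\ref{3-2}) for $|\mh|$ gives
\begin{equation*}
d_{\mh}(v)=|\mh|-|\mh-v|\ \ge\ \Bigl(\binom{k\ell'-1}{r-1}-\binom{(k-1)\ell'-1}{r-1}\Bigr)n-O_{r,k,\ell}(1).
\end{equation*}

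For the complementary bound I would pass to the link $\mathcal{L}:=\mathcal{N}_{\mh}(v)$, an $(r-1)$-uniform hypergraph on at most $n-1$ vertices with $|\mathcal{L}|=d_{\mh}(v)$. The crux is the observation that $\mathcal{L}$ must itself be $\ber kP_{\ell}$-free: a copy of $\ber kP_{\ell}$ in $\mathcal{L}$ would, after adjoining $v$ to each of its $k\ell$ defining hyperedges, give $k\ell$ distinct hyperedges of $\mh$ realizing a $\ber kP_{\ell}$ in $\mh$ — all its defining vertices lie in $V(\mathcal{L})\subseteq V(\mh)\setminus\{v\}$, so the $k$ paths remain pairwise vertex-disjoint in $\mh$ — contradicting that $\mh$ is $\ber kP_{\ell}$-free. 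Therefore, applying the induction hypothesis to the pair $(k,r-1)$ (legitimate since $k+(r-1)<k+r$, $r-1\ge2$, and $\ell'\ge r-1$, $2\ell'\ge(r-1)+7$ still hold; when $r-1=2$ one invokes Theorem \ref{thm:kPl} instead, using that $\ell\ge5$), we obtain
\begin{equation*}
d_{\mh}(v)=|\mathcal{L}|\ \le\ \mathrm{ex}_{r-1}(n,\ber kP_{\ell})=\binom{k\ell'-1}{r-2}n+O_{r,k,\ell}(1).
\end{equation*}

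Combining the two estimates yields
\begin{equation*}
\Bigl(\binom{k\ell'-1}{r-1}-\binom{(k-1)\ell'-1}{r-1}-\binom{k\ell'-1}{r-2}\Bigr)n\ \le\ O_{r,k,\ell}(1),
\end{equation*}
whereas Lemma \ref{lem: the ineuqlaity in sec 3 (3)}, applied with $\ell$ replaced by $\ell'$ (valid since $\ell'\ge r\ge3$ and $k\ge2$), says precisely that the coefficient of $n$ on the left is strictly positive — a contradiction for $n$ large, which proves the lemma. The only genuinely new ingredient is the link observation; there is no serious obstacle beyond it, and the main point requiring care is the bookkeeping: the induction hypothesis is invoked at the two smaller instances $(k-1,r)$ and $(k,r-1)$, with the degenerate uniformity $r-1=2$ handled by Theorem \ref{thm:kPl} and the instance $k-1=2$ handled by Section 3.
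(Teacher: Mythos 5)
Your proposal is correct and follows essentially the same route as the paper: the paper also splits $\mh$ into the hyperedges avoiding $v$ (bounded by $\mathrm{ex}_r(n-1,\ber (k-1)P_\ell)$ via the induction hypothesis) and the hyperedges containing $v$ (bounded through the $(r-1)$-uniform link by $\mathrm{ex}_{r-1}(n-1,\ber kP_\ell)$), and then derives the contradiction with (\ref{3-2}) using Lemma \ref{lem: the ineuqlaity in sec 3 (3)} with $\ell'$ in place of $\ell$. Your rephrasing in terms of $d_{\mh}(v)=|\mh|-|\mh-v|$ is only a cosmetic rearrangement of the same count, and your bookkeeping of the base cases ($r-1=2$ via Theorem \ref{thm:kPl}, $k-1=2$ via Section 3) matches the paper's induction setup.
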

 \begin{proof} Suppose there is $v\in V(\mathcal{H})$ such that
  $\mh-v$ is $\ber (k-1)P_{\ell}$-free. Let $\mathcal{E}_1=E_\mh(v)$ and $\mathcal{E}_2=E(\mh-v)$. Then  $\mh=\mathcal{E}_1\cup \mathcal{E}_2$.
     Since $\mh-v$ is $\ber (k-1)P_{\ell}$-free, by the induction hypothesis,  we have
     \begin{equation*}
          \begin{aligned}
              |\mathcal{E}_2|&\leq \mathrm{ex}_{r}(n-1,\ber(k-1)P_{\ell})\\
              &\leq {(k-1)\ell'-1\choose r-1}(n-(k-1)\ell')+{(k-1)\ell'-1\choose r}+\binom{(k-1)\ell'-1}{r-2}.
          \end{aligned}
      \end{equation*}
      Similarly, we have
      \begin{equation*}
          \begin{aligned}
              |\mathcal{E}_1|&\leq \mathrm{ex}_{r-1}(n-1,\ber kP_{\ell})\\
              &\leq \binom{k\ell'-1}{r-2}(n-k\ell')+\binom{k\ell'-1}{r-1}+\binom{k\ell'-1}{r-3}.
          \end{aligned}
      \end{equation*}
      Thus, when  $k\geq 3$, we have that
     \begin{equation*}
     \begin{aligned}    |\mathcal{H}|=&|\mathcal{E}_1|+|\mathcal{E}_2|\leq \left(\binom{(k-1)\ell'-1}{r-1}+\binom{k\ell'-1}{r-2}\right)n+O_{k,r,\ell}(1)\\
             <& {k\ell'-1\choose r-1}(n-k\ell'+1)+{k\ell'-1\choose r},
         \end{aligned}
     \end{equation*}
     a contradiction with (\ref{3-2}).
     The last inequality holds by Lemma \ref{lem: the ineuqlaity in sec 3 (3)} and when $n$ is large enough.
 \end{proof}

We construct an $\ell'$-uniform hypergraph $\mg=\{H:|BCN(H)|\ge \alpha n,|H|=\ell',H\subseteq V(\mh)\}$, where $\alpha$ is described in Lemma \ref{lem: enough Berge common}. %We set $A=V(\mg)$ for short.
We rewrite Lemma \ref{lem: enough Berge common} as follows.
  \begin{lemma}\label{pro: intersecting A with ell}
    For  every copy of $\ber P_{\ell}$ in $\mh$, say $\mathcal{P}$, there is $H\subseteq DV_\mh(\mathcal{P})$  such that  $H\in \mg$. Particularly, $|DV_\mh(\mathcal{P})\cap V(\mg)|\geq \ell'$.
\end{lemma}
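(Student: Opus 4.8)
The plan is to observe that this lemma is essentially a repackaging of Lemma \ref{lem: enough Berge common} in terms of the auxiliary hypergraph $\mg$, so that all of the substance has already been carried out. Concretely, given a copy $\mathcal{P}$ of $\ber P_\ell$ in $\mh$, I would apply Lemma \ref{lem: enough Berge common} to obtain a set $V_0\subseteq DV_\mh(\mathcal{P})$ with $|V_0|=\ell'$ and $|BCN(V_0)|\ge\alpha n$. Setting $H:=V_0$, the very definition $\mg=\{H:|BCN(H)|\ge\alpha n,\ |H|=\ell',\ H\subseteq V(\mh)\}$ immediately yields $H\in\mg$, which is the first assertion.

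For the ``particularly'' clause, note that $H\subseteq V(\mg)$, since $V(\mg)$ is by convention the union of the hyperedges of $\mg$ and $H$ is one of them; moreover $H\subseteq DV_\mh(\mathcal{P})$ by construction. Hence $H\subseteq DV_\mh(\mathcal{P})\cap V(\mg)$, and because $|H|=\ell'$ we conclude $|DV_\mh(\mathcal{P})\cap V(\mg)|\ge\ell'$.

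There is no real obstacle here: the entire weight of the argument — bounding $|\mathcal{E}_{t,1}|$ by $\eta n$ via the Pigeonhole Principle, using the induction hypothesis to control each $|\mh_t|$, the degree analysis producing the set $B$ with $|B|\ge\beta n$, and the final averaging over the $\binom{\ell+1}{\ell'}$ choices of $\ell'$-subsets of $D$ — was all done inside the proof of Lemma \ref{lem: enough Berge common}. The only thing worth being careful about is that the constant $\alpha$ appearing in the definition of $\mg$ is \emph{exactly} the one produced by Lemma \ref{lem: enough Berge common}, so that the membership $V_0\in\mg$ holds literally rather than merely asymptotically; this is precisely how $\mg$ was set up, so the deduction is immediate.
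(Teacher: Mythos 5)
Your proposal is correct and matches the paper exactly: the paper introduces $\mg$ with the constant $\alpha$ from Lemma \ref{lem: enough Berge common} and then states Lemma \ref{pro: intersecting A with ell} as a direct rewriting of it, giving no separate argument beyond the immediate deduction you spell out. Nothing is missing.
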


 Lemma \ref{lem: delete one vertex remains many P2l} implies that for every vertex $v\in V(\mg)$, we can find $(k-1)$ disjoint hyperedges in $\mg-v$.
Moreover, we have the following lemma.
\begin{lemma}\label{pro: find berge P2l from mg}
    For every $H=\{v_1,\dots,v_{\ell'}\}\in \mg$, there exist a copy of $\ber P_{2\ell'}$ contained $H$ as alternating defining vertices, a copy of $\ber P_{2\ell'-1}$ contained $H$ as alternating defining vertices with one of $v_{1},v_{\ell'}$ as an end vertex, and a copy of $\ber P_{2\ell'-2}$ contained $H$ as alternating defining vertices and  $v_1,v_{\ell'}$ as end vertices.
\end{lemma}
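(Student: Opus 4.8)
The plan is to realise the three paths by interleaving the vertices of $H$ with vertices of $BCN(H)$, exploiting that, since $H\in\mg$, we have $|BCN(H)|\ge\alpha n$ by definition, while only $O_{r,k,\ell}(1)$ vertices and hyperedges are needed. Concretely, I would first build a $\ber P_{2\ell'}$ whose defining-vertex sequence is
$$w_0,\,v_1,\,w_1,\,v_2,\,w_2,\,\dots,\,v_{\ell'},\,w_{\ell'},$$
where $w_0,\dots,w_{\ell'}$ are distinct vertices of $BCN(H)$; in this path the vertices of $H$ occupy the even positions, so they form an alternating set of defining vertices, and the other two paths will be obtained by trimming this one.

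To choose $w_0,\dots,w_{\ell'}$ I would proceed greedily over $i=0,1,\dots,\ell'$: at step $i$ pick $w_i\in BCN(H)$ that is distinct from $w_0,\dots,w_{i-1}$ and that lies in \emph{none} of the (at most $2\ell'$) hyperedges already selected at earlier steps; since this forbids at most $\ell'+2r\ell'$ vertices and $|BCN(H)|\ge\alpha n$, such a $w_i$ exists once $n$ is large. Having fixed $w_i$, I read off hyperedges for its incident path-edges directly from the Berge-common-neighbour property: for $1\le i\le\ell'-1$, applying it to the pair $v_i,v_{i+1}$ yields two \emph{distinct} hyperedges $E_i\supseteq\{v_i,w_i\}$ and $E_i'\supseteq\{w_i,v_{i+1}\}$; for $i=0$ (resp.\ $i=\ell'$), applying it to the pair $v_1,v_2$ (resp.\ $v_{\ell'-1},v_{\ell'}$) yields a hyperedge $E_0\supseteq\{w_0,v_1\}$ (resp.\ $E_{\ell'}\supseteq\{v_{\ell'},w_{\ell'}\}$). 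Each newly chosen hyperedge contains $w_i$, hence differs automatically from every hyperedge chosen earlier, and $E_i\ne E_i'$ by the property itself; so $E_0,E_1,E_1',\dots,E_{\ell'-1},E_{\ell'-1}',E_{\ell'}$ are $2\ell'$ pairwise distinct hyperedges realising the $2\ell'$ edges of $w_0v_1w_1\cdots v_{\ell'}w_{\ell'}$, i.e.\ the required $\ber P_{2\ell'}$.

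Finally I would just delete end vertices: removing $w_0$ together with the hyperedge $E_0$ leaves a $\ber P_{2\ell'-1}$ on $v_1,w_1,\dots,v_{\ell'},w_{\ell'}$ with $v_1$ as an end vertex, and additionally removing $w_{\ell'}$ and $E_{\ell'}$ leaves a $\ber P_{2\ell'-2}$ on $v_1,w_1,\dots,w_{\ell'-1},v_{\ell'}$ with $v_1$ and $v_{\ell'}$ as its two ends; in all three cases the vertices of $H$ still alternate with the $w_i$. The only delicate point is making the selected vertices and hyperedges simultaneously distinct, and the greedy rule above handles it cleanly: by forbidding $w_i$ to lie in any previously used hyperedge, the hyperedges subsequently supplied by the Berge-common-neighbour property are new for free. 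I expect this bookkeeping, rather than any substantive combinatorics, to be the main thing to get right.
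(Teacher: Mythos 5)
Your proposal is correct and follows essentially the same argument as the paper: greedily select distinct vertices of $BCN(H)$ that avoid all previously chosen hyperedges, use the Berge-common-neighbour property at each connector to supply two distinct hyperedges, and note that each new hyperedge contains the freshly chosen $w_i$ and so differs from all earlier ones. The only (cosmetic) difference is order of construction: the paper first builds the $\ber P_{2\ell'-2}$ on $v_1,\omega_1,\dots,\omega_{\ell'-1},v_{\ell'}$ and then appends $\omega_0,\omega_{\ell'}$, whereas you build the full $\ber P_{2\ell'}$ and trim its ends.
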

\begin{proof} Let $H=\{v_1,\dots,v_{\ell'}\}\in \mg$. Then $|BCN(H)|\ge \alpha n$. So there exist $w_1\in V(\mh)\setminus H $ and two distinct hyperedges $E_1,E_2$ with $\{w_1,v_1\}\subseteq E_1$ and $\{w_1,v_2\}\subseteq E_2$. When $n$ is large enough, there are $w_2\in V(\mh)\setminus(H\cup E_1\cup E_2)$ and two distinct hyperedges $E_3,E_4$ with $\{v_2,w_2\}\subseteq E_3$ and $\{v_3,w_2\}\subseteq E_4$. Repeat this process until there exist $\omega_{\ell'-1}\in V(\mathcal{H})\setminus (H\cup E_1 \cup E_2\ldots\cup E_{2\ell'-4})$ and two distinct hyperedges $E_{2\ell'-3},E_{2\ell'-2}$ with $\{v_{\ell'-1},\omega_{\ell'-1}\}\subseteq E_{2\ell'-3}$ and $\{v_{\ell'},\omega_{\ell'-1}\}\subseteq E_{2\ell'-2}$. Thus there is a $\ber P_{2\ell'-2}$ with defining vertices $\{v_1,\omega_1,v_2,\ldots,v_{\ell'-1},\omega_{\ell'-1},v_{\ell'}\}$. Since $|BCN(H)|\ge \alpha n$ and $n$ is large enough, there still exist $\omega_0,\omega_{\ell'}\in V(\mathcal{H})\setminus (H\cup E_1\ldots\cup E_{2\ell'-2})$ and two distinct hyperedges ${E_{0}},E_{2\ell'-1}$ with $\{\omega_{0},v_1\}\subseteq E_{0}$ and $\{v_{\ell'},\omega_{\ell'}\}\subseteq E_{2\ell'-1}$. Thus we have a $\ber P_{2\ell'-1}$ with defining vertices $\{\omega_{0},v_1,\omega_1,\ldots,v_{\ell'-1},\omega_{\ell'-1},v_{\ell'}\}$ or $\{v_1,\omega_1,v_2,\ldots,\omega_{\ell'-1},v_{\ell'},\omega_{\ell'}\}$, and also a $\ber P_{2\ell'}$ with defining vertices $\{\omega_{0},v_1,\omega_1,\ldots,\omega_{\ell'-1},v_{\ell'},\omega_{\ell'}\}$.
\end{proof}

By Lemma \ref{pro: find berge P2l from mg},  there are no $k$ disjoint hyperedges in $\mg$ when $n$ is large enough; otherwise we can find a copy of $\ber kP_{\ell}$. Then we can bound the size of $V(\mg)$ by a constant relative to $k,r$ and $\ell$.

\begin{lemma}\label{lem: A is bounded}
     There exists a constant $C=C(k,r,\ell)>0$ such that $|V(\mg)|\leq C$.
 \end{lemma}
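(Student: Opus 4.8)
The plan is to argue by contradiction: assume $|V(\mg)|>C$ for a huge constant $C=C(k,r,\ell)$ and manufacture a $\ber kP_\ell$ in $\mh$. Two structural facts about $\mg$ are available. First, by Lemma~\ref{pro: find berge P2l from mg} and the $\ber kP_\ell$-freeness of $\mh$, the hypergraph $\mg$ has no $k$ pairwise disjoint hyperedges (any such would blow up, using fresh connecting vertices supplied by the linear-sized $BCN$-sets, into $k$ vertex-disjoint copies of $\ber P_{2\ell'}$, each containing a copy of $\ber P_\ell$). Second, Lemmas~\ref{lem: delete one vertex remains many P2l} and~\ref{pro: intersecting A with ell} together show that for every $v\in V(\mg)$ the hypergraph $\mg-v$ still contains $k-1$ pairwise disjoint hyperedges, i.e. $\mg$ is ``critical'' under single-vertex deletions. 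The first fact lets us fix a maximum matching $M=\{H_1,\dots,H_{k-1}\}$ of $\mg$; put $T=H_1\cup\dots\cup H_{k-1}$, so $|T|=(k-1)\ell'$ and, by maximality, every hyperedge of $\mg$ meets $T$.

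Next I would use $|V(\mg)|>C$ to extract a large, highly structured subfamily. Since $|V(\mg)\setminus T|$ is enormous, the pigeonhole principle gives a nonempty $T_0\subseteq T$ and a huge subfamily of $\mg$ whose trace on $T$ is exactly $T_0$; applying the Sunflower Lemma to the $(\ell'-|T_0|)$-sets obtained by deleting $T_0$, and absorbing the sunflower kernel into the core, we obtain a sunflower $\{Z\cup P_1,\dots,Z\cup P_N\}\subseteq\mg$ with core $Z$ satisfying $1\le|Z|\le\ell'-1$, with $P_1,\dots,P_N$ pairwise disjoint and disjoint from $Z$, and with $N$ as large as we please. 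For each $i$, $Z\cup P_i\in\mg$ gives $|BCN(Z\cup P_i)|\ge\alpha n$; hence every pair of vertices inside $Z\cup P_i$ (and inside $Z$) has $\ge\alpha n$ Berge-common neighbours, so by the construction in Lemma~\ref{pro: find berge P2l from mg} the $\ell'$-set $Z\cup P_i$ is the alternating defining-vertex set of a $\ber P_{2\ell'}$ whose remaining $\ell'+1$ defining vertices may be chosen freely outside any prescribed bounded set.

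The heart of the argument is to find $k-1$ pairwise disjoint hyperedges of $\mg$ that are \emph{disjoint from the core $Z$}. Granting this, blow them up via Lemma~\ref{pro: find berge P2l from mg} into $k-1$ pairwise disjoint copies of $\ber P_\ell$ whose defining vertices (including the new connectors) avoid $Z$; since $N$ is huge, some petal $P_{i_0}$ is disjoint from the bounded vertex set used so far, and then $Z\cup P_{i_0}\in\mg$ gives one more copy of $\ber P_\ell$ disjoint from all of the others — a $\ber kP_\ell$, a contradiction. To produce the $k-1$ disjoint $\mg$-edges avoiding $Z$ it suffices to show $\mg-Z$ still contains $k-1$ pairwise disjoint hyperedges. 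If not, a maximal matching of $\mg-Z$ has $\le k-2$ edges, and its vertex set together with $Z$ is a vertex cover of $\mg$ of size $\le|Z|+(k-2)\ell'$; feeding this smaller cover back into the argument in place of $T$ produces a new sunflower with a new, necessarily small, core, and one iterates. The single-vertex criticality of $\mg$ (Lemma~\ref{lem: delete one vertex remains many P2l}), together with the fact that $\mg$ has no $k$ disjoint edges (so every cover one reaches still has size $\ge k-1$), must force this process to terminate at a sunflower whose core can be avoided by $k-1$ disjoint $\mg$-edges.

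I expect the main obstacle to be exactly this last step — showing the iteration terminates rather than cycling among vertex covers, i.e. ruling out the possibility that the core of every large sunflower is ``essential'' for a maximum matching of $\mg$. Handling it needs careful book-keeping that exploits both the criticality of $\mg$ under vertex deletions and the linearity in $n$ of the sets $BCN(H)$ (so that fresh connecting vertices for the blow-ups remain available no matter how many bounded sets we have already committed to avoid). Everything else — the matching-cover bound, the pigeonhole and Sunflower steps, and the path constructions of Lemma~\ref{pro: find berge P2l from mg} — should be routine once $n$ is large in terms of $k,r,\ell$, the numerical comparisons being of the type already established in Section~2 (cf. Lemma~\ref{lem: the ineuqlaity in sec 3 (3)}).
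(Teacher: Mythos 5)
Your proposal does not follow the paper's route, and it contains a genuine gap at exactly the step you flag yourself. The whole argument hinges on producing $k-1$ pairwise disjoint hyperedges of $\mg$ that avoid the sunflower core $Z$, so that the blown-up $(k-1)$ copies of $\ber P_\ell$ are vertex-disjoint from the $k$-th copy built on $Z\cup P_{i_0}$. But the only criticality available is Lemma \ref{lem: delete one vertex remains many P2l}, which concerns deletion of a \emph{single} vertex; $Z$ can have up to $\ell'-1$ vertices, and nothing in the paper (or in your sketch) shows that $\mh-Z$, or $\mg-Z$, still supports $k-1$ disjoint members of $\mg$. The natural counting attempt to upgrade Lemma \ref{lem: delete one vertex remains many P2l} to a set $Z$ fails numerically: bounding the edges through $Z$ by $|Z|\,\mathrm{ex}_{r-1}(n,\ber kP_\ell)$ and the rest by $\mathrm{ex}_r(n,\ber (k-1)P_\ell)$ gives roughly $\bigl(\binom{(k-1)\ell'-1}{r-1}+(\ell'-1)\binom{k\ell'-1}{r-2}\bigr)n$, which already for $r=3$, $k=3$ exceeds $\binom{k\ell'-1}{r-1}n$, so no contradiction with (\ref{3-2}) arises. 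Your fallback — iterating over smaller vertex covers and asserting the process ``must terminate'' — is precisely the unproved part, and it is not clear it can be repaired without a new idea: a priori all large sunflowers in $\mg$ could have cores that are essential to every near-maximum matching, and single-vertex criticality does not rule this out.

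For comparison, the paper avoids this difficulty entirely by bounding the \emph{maximum degree} of $\mg$ rather than working with matchings and covers of $\mg$ directly. It first shows (Claim \ref{claim: max degree}) that a constant bound on $\Delta(\mg)$, combined with the facts that $\mg$ has no $k$ disjoint edges while $\mg-v$ has $k-1$ disjoint edges for every $v$, forces $|V(\mg)|\leq (k-1)\ell'^2\Delta(\mg)$. Then (Claim \ref{claim: 3 is true}) it bounds $\Delta(\mg)$: a vertex of huge degree yields, via Lemma \ref{lem: berge star}, a large Berge star $\{H_1,\dots,H_{k_1}\}$ in $\mg$, and a dyadic merging procedure — at each step using the pigeonhole fact that among more than $2/\alpha$ of the $H_i$'s two share a Berge-common neighbour outside the bounded set already used — doubles the lengths of $k_{j+1}$ disjoint Berge paths until one obtains a $\ber kP_{\geq \ell}$, a contradiction. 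Your sunflower idea is attractive and the extraction steps (maximum matching cover, pigeonhole on traces, sunflower lemma, blow-ups via Lemma \ref{pro: find berge P2l from mg}) are fine, but without a proof of the core-avoidance step the argument is incomplete, whereas the paper's degree-based argument closes the loop.
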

\begin{proof}
    Let $\Delta(\mg)=\max_{v\in V(\mg)}d_\mg(v)$. Then we have the following result.
    \vskip.2cm
    \begin{claim}\label{claim: max degree}
       If there exists a constant $\hat{C}=\hat{C}(r,k,\ell)>0$ such that $\Delta (\mg)\leq \hat{C}$, then there exists $C=C(r,k,\ell)$ with $|V(\mg)|\leq C$.
    \end{claim}

     \noindent{\bf Proof of Claim \ref{claim: max degree}}
     For any vertex $v\in V(\mg)$, by Lemma \ref{lem: delete one vertex remains many P2l}, there are $k-1$ disjoint hyperedges in $\mg-v$, denote them as $H_1,\dots,H_{k-1}$.
     By Lemma \ref{pro: find berge P2l from mg},  there are no $k$ disjoint hyperedges in $\mg$ which implies  for all $E\in \mg$ there is $1\le i\le k-1$ such that $E\cap H_i\not=\emptyset$.
     So we have
    $$|V(\mg)|\leq \sum_{v\in \bigcup_{i=1}^{k-1}H_i}\ell'd_\mg(v)\leq (k-1)\ell'^2\Delta(\mg)\leq (k-1){\ell'}^2\hat{C}.$$
    By setting $C=(k-1)\ell'^2\hat{C}$, the claim holds. \q

    Let $k_q=k$ and $k_i=\frac{2}{\alpha}+2(k_{i+1}-1)+1$ for $i=1,2,\ldots,q-1$, where $q=\lceil\log_{2}(\ell+1)\rceil+1$ and
    $\alpha$ is the constant described in Lemma \ref{lem: enough Berge common}.

\vskip.2cm
\begin{claim}\label{claim: 3 is true}
    $\Delta(\mg)\leq \binom{k_1-1}{r-1}$.
\end{claim}

    \noindent{\bf Proof of Claim \ref{claim: 3 is true}}
    Suppose there       exists  $v\in V(\mg)$ with $d_\mg(v)>\binom{k_1-1}{r-1}$. First we can observe the following two Facts.
   \par\noindent
   \textbf{Fact 1.} Let $\{H_1,H_{2},\ldots,H_{s}\}\subseteq \mg$ and  $S\subseteq V(\mh)$ such that $|S|<\frac{\alpha}{2}n$. If $s>\frac{2}{\alpha}$, then there exist  $u\in V(\mh)\setminus S$ and $i,j\in\{1,\ldots,s\}$ with $i\not=j$, say $i=1,j=2$, such that $u\in BCN(H_1)\cap BCN(H_2)$. (Otherwise $n\ge\sum_{i=1}^s|BCN(H_i)\cap (V(\mh)\setminus S)|>n$, a contradiction.)%two hyperedges(WLOG denote them as $H_1,H_2$) such that $u$ is a Berge common neighbor of $H_1$ and is also a Berge common neighbor of $H_2$.
   \par\noindent
   \textbf{Fact 2}: Assume $u$ and $H_1,H_2$ are defined as Fact 1. Let $v_1\in H_1\setminus H_2$ and $v_2\in H_2\setminus H_1$. Then either there exist distinct hyperedges $E_1,E_2\in E(\mh)$ such that $\{v_1,u\}\subseteq E_1$ and $\{v_2,u\}\subseteq E_2$ or for any hyperedge $E\in E(\mh)$, if $\{v_i,u\}\subseteq E$ then $v_{3-i}\in E$, where $i=1,2$.
   \par\noindent

   Now we will find a $\ber kP_{\ell}$ in $\mh$ by Facts 1 and 2. Since $d_\mg(v)>\binom{k_1-1}{r-1}$, by Lemma \ref{lem: berge star}, there is a $\ber S_{k_1,v}$ in $\mg$, whose the defining hyperedges are $DH=\{H_1,H_2,\cdots,H_{k_1}\}$ and the defining vertices are $DS=\{v,v_{1,1},v_{1,2},\cdots,v_{1,k_1}\}$ with $v_{1,i}\in V(H_i)$. We will use $P_{\ge s}(v_{1,i},v_{1,j})$ to denote a Berge path of length at least $s$ with two end points $v_{1,i}$ and $v_{1,j}$.

  % By Fact 2, for any $v_{1,i},v_{1,j}$ with $i\not=j$, there is a path, denote as $P^{i,j}$, of length at least 1 such that $v_{1,i},v_{1,j}$ are two end vertices of $P^{i,j}$ if $k_1>\frac{2}{\alpha}$.
     % Recall that $P_s$ is a path with $s$ edges.
  % Next we use Fact 1 and Fact 2 to do the following steps. Since there are many hyperedges in $\mg$, in step 1 we use the above facts to find $\ber P_1$ or $\ber P_2$ from which we at least find $\ber k_1P_{1}$. In the next step we consider the endpoint of each short Berge path, since the number of them is large enough then we use the above two facts on each two of them to connect them into some longer paths. We repeat this process until we find a contradiction.
   \par \noindent
   \textbf{Step $1$}:
      Since $k_1>\frac{2}{\alpha}$, by Fact 1, we assume there is $v_{2,1}\in V(\mh)\setminus DS $ such that $v_{2,1}\in BCN(H_1)\cap BCN(H_2)$. By Fact 2, there are $E_{2,1}\in E(\mh)$ such that $\{v_{1,1},v_{1,2},v_{2,1}\}\subseteq E_{2,1}$ or  $E_{2,1},E_{2,2}\in E(\mh)$ such that $\{v_{2,1},v_{1,1}\}\subseteq E_{2,1}$ and $\{v_{2,1},v_{1,2}\}\subseteq E_{2,2}$. Thus we have a Berge path $P_{\ge1}^{1}(v_{1,1},v_{1,2})$.
      \par
      Since $k_1-2>\frac{2}{\alpha}$, by Fact 1, there is $v_{2,3}\in V(\mh)\setminus (DS\cup V(P_{\ge1}^{1}(v_{1,1},v_{1,2})))$ such that $v_{2,3}\in BCN(H_3)\cap BCN(H_4)$. By Fact 2, there are $E_{2,3}\in E(\mh)$ such that $\{v_{1,3},v_{1,4},v_{2,3}\}\subseteq E_{2,3}$ or  $E_{2,3},E_{2,4}\in E(\mh)$ such that $\{v_{2,3},v_{1,3}\}\subseteq E_{2,3}$ and $\{v_{2,3},v_{1,4}\}\subseteq E_{2,4}$. So we have a Berge path $P_{\ge1}^{2}(v_{1,3},v_{1,4})$.
      \par
     Note that
      $k_1-2(k_2-1)>\frac{2}{\alpha}$.   Repeat this process,  there exists $v_{2,2k_2-1}\in V(\mh)\setminus (DS\cup \left(\cup_{i=1}^{k_2-1}V(P_{\ge 1}^{i}(v_{1,2i-1},v_{1,2i}))\right))$ and  $E_{2,2k_2-1}\in E(\mh)$ such that $\{v_{1,2k_2-1},v_{1,2k_2},v_{2,2k_2-1}\}\subseteq E_{2,2k_2-1}$ or  $E_{2,2k_2-1},E_{2,2k_2}\in E(\mh)$ such that $\{v_{2,2k_2-1},v_{1,2k_2-1}\}\subseteq E_{2,2k_2-1}$ and $\{v_{2,2k_2-1},v_{1,2k_2}\}\subseteq E_{2,2k_2}$. We find a Berg path $P_{\ge1}^{k_2}(v_{1,2k_2-1},v_{1,2k_2})$.
    \par

    Let $DS_1=DS\cup \cup_{i=1}^{k_2}V(P_{\ge 1}^{i}(v_{1,2i-1},v_{1,2i}))$. Then $|DS_1|\le k_1+1+2rk_2$ and there is  a $\ber k_2P_{\ge1}$ in $\mh$, where the $k_2$ disjoint copies of $\ber P_{\ge1}$ are $P_{\ge 1}^{i}(v_{1,2i-1},v_{1,2i})$ for $1\le i\le k_2$.

    \par\noindent
    \textbf{Step $2$}:
      Consider $\{H_{1},H_{3},\ldots,H_{2k_2-1}\}$. Since $k_2>\frac{2}{\alpha} $, by Fact 1, we assume there exists $v_{3,1}\in V(\mh)\setminus DS_1$ such that $v_{3,1}\in BCN(H_1)\cap BCN(H_3)$.
      By the same argument as Step 1, there is $P_{\ge 1}(v_{1,1},v_{1,3})$ with $ v_{3,1}\in V(P_{\ge 1}(v_{1,1},v_{1,3}))$. Then we have $P_{\ge 3}^1(v_{1,2},v_{1,4}):=P_{\ge1}^{1}(v_{1,1},v_{1,2})P_{\ge 1}(v_{1,1},v_{1,3})P_{\ge1}^{2}(v_{1,3},v_{1,4})$.

       Note that $k_2-2(k_3-1)>\frac{2}{\alpha}$. Repeat this process,   there exist $v_{3,4k_3-3}\in V(\mh)\setminus (DS_1\cup \left(\cup_{i=1}^{k_3-1}V(P_{\ge 3}^{i}(v_{1,4i-2},v_{1,4i}))\right))$ such that $v_{3,4k_3-3}\in BCN(H_{4k_3-3})\cap BCN(H_{4k_3-1})$. By the same argument as Step 1, there is $P_{\ge 1}(v_{1,4k_3-3},v_{1,4k_3-1})$ with $v_{3,4k_3-3}\in V(P_{\ge 1}(v_{1,4k_3-3},v_{1,4k_3-1}))$. Then we have $$P_{\ge 3}^{k_3}(v_{1,4k_3-2},v_{1,4k_3}):=P_{\ge 1}^{2k_3-1}(v_{1,4k_3-3},v_{1,4k_3-2})P_{\ge 1}(v_{1,4k_3-3},v_{1,4k_3-1})P_{\ge 1}^{2k_3}(v_{1,4k_3-1},v_{1,4k_3}).$$
     % By Fact 2, there are $E_{3,1}\in E(\mh)$ such that $\{v_{1,1},v_{1,3},v_{3,1}\}\subseteq E_{3,1}$ or  $E_{3,1},E_{3,3}\in E(\mh)$ such that $\{v_{3,1},v_{1,1}\}\subseteq E_{3,1}$ and $\{v_{3,1},v_{1,3}\}\subseteq E_{3,3}$. Recall $v_{1,1}$ is an end point of $P_{1}^1$ and $v_{1,3}$ is an end point of $P_{2}^1$. Thus we find a Berge path $P_{1}^{2}$ with length at least 3, with $\{v_{1,2},v_{1,4}\}$ as its two end points from its defining vertices.
          \par
     Let $DS_2=DS_1\cup \left(\cup_{i=1}^{k_3}V(P_{\ge 1}^{i}(v_{1,4i-2},v_{1,4i}))\right)$. Then $|DS_2|\le k_1+1+2rk_2+2rk_3$ and there is  a $\ber k_3P_{\ge3}$ in $\mh$, where the $k_3$ disjoint copies of $\ber P_{\ge3}$ are $P_{\ge 3}^{i}(v_{1,4i-2},v_{1,4i})$ for $1\le i\le k_3$.

     \par\noindent
     {\bf Step $j$ $(3\le j\le q-1)$:}  Consider $\{H_{2^{j-2}},H_{2^{j-2}+2^{j-1}},\ldots,H_{2^{j-2}+2^{j-1}(k_j-1)}\}$. Since $k_j>\frac{2}{\alpha}$, by Fact 1, we assume there exists $v_{j+1,2^{j-2}}\in V(\mh)\setminus DS_{j-1}$ such that $v_{j+1,2^{j-2}}\in BGN(H_{2^{j-2}})\cap BCN(H_{2^{j-2}+2^{j-1}})$.
     By the same argument as Step 1, there is $P_{\ge 1}(v_{1,2^{j-2}},v_{1,2^{j-2}+2^{j-1}})$ with $v_{j+1,2^{j-2}}\in V(P_{\ge 1}(v_{1,2^{j-2}},v_{1,2^{j-2}+2^{j-1}}))$. Then we have a Berge path
     \par\noindent
     $P_{\ge 2^j-1}^{1}(v_{1,2^{j-1}},v_{1,2^j}):=P_{\ge 2^{j-1}-1}^{1}(v_{1,2^{j-2}},v_{1,2^{j-1}})P_{\ge 1}(v_{1,2^{j-2}},v_{1,2^{j-2}+2^{j-1}})P_{\ge 2^{j-1}-1}^{2}(v_{1,2^{j-2}+2^{j-1}},v_{1,2^j})$.
     \par
     Note that $k_j-2(k_{j+1}-1)>\frac{2}{\alpha}$.
     Repeat this process,  there exist $v_{j+1,2^jk_{j+1}-3\cdot 2^{j-2}}\in V(\mh)\setminus \left(DS_{j-1}\cup \left(\cup_{i=1}^{k_{j+1}-1}V(P_{\ge 2^j-1}^{i}(v_{1,2^{j}i-2\cdot2^{j-2}},v_{1,2^{j}i}))\right)\right)$ such that $$v_{j+1,2^jk_{j+1}-3\cdot 2^{j-2}}\in BCN(H_{2^{j}k_{j+1}-3\cdot2^{j-2}})\cap BCN(H_{2^{j}k_{j+1}-2^{j-2}}).$$ By the same argument as Step 1, there is $P_{\ge1}(v_{1,2^{j}k_{j+1}-3\cdot2^{j-2}},v_{1,2^{j}k_{j+1}-2^{j-2}})$ with $v_{j+1,2^jk_{j+1}-3\cdot 2^{j-2}}\in V(P_{\ge1}(v_{1,2^{j}k_{j+1}-3\cdot2^{j-2}},v_{1,2^{j}k_{j+1}-2^{j-2}}))$. Then we have
    \begin{equation*}
    \begin{aligned}
     &P_{\ge 2^j-1}^{k_{j+1}}(v_{1,2^{j}k_{j+1}-2\cdot2^{j-2}},v_{1,2^{j}k_{j+1}})
     :=P_{\ge 2^{j-1}-1}^{2k_{j+1}-1}(v_{1,2^{j}k_{j+1}-3\cdot2^{j-2}},v_{1,2^{j}k_{j+1}-2\cdot2^{j-2}})\\
     ~~~~~~~~~~~~~~~&P_{\ge1}(v_{1,2^{j}k_{j+1}-3\cdot2^{j-2}},v_{1,2^{j}k_{j+1}-2^{j-2}})
     P_{\ge2^{j-1}-1}^{2k_{j+1}}(v_{1,2^{j}k_{j+1}-2^{j-2}},v_{1,2^{j}k_{j+1}}).
    \end{aligned}
\end{equation*}

     \par
     Let $DS_{j}=DS_{j-1}\cup \left(\cup_{i=1}^{k_{j+1}}V(P_{\ge 2^j-1}^{i}(v_{1,2^{j}i-2\cdot2^{j-2}},v_{1,2^{j}i}))\right)$. Then $|DS_j|\le k_1+1+2rk_2+\cdots+2rk_{j+1}$ and there is a $\ber k_{j+1}P_{\ge 2^{j}-1}$ in $\mh$.

\par

     After Step $q-1$, there is a $\ber k_{q}P_{\ge2^{q-1}-1}$ in $\mh$. From which we find a $\ber kP_{\ell}$, a contradiction. Thus $\Delta(\mg)\leq \binom{k_1-1}{r-1}$.
    \q

 By Claims \ref{claim: max degree} and \ref{claim: 3 is true}, we finished the proof.
\end{proof}
 %\section{Proof of Theorem \ref{thm: berge linear forest}}

  %Let $\mh$ be the $r$-uniform $\ber kP_{\ell}$-free hypergraph with $ex_r(n,\ber kP_{\ell})$ hyperedges. We construct an $\ell'$-uniform hypergraph $\mg$ with $V(\mg)\subseteq V(\mh)$ and $ E(\mg)=\{H:|BCN(H)|\ge \alpha n\}$, where $\alpha$ is described in Lemma \ref{lem: enough Berge common}. We set $A=V(\mg)$ for short. By Lemma \ref{lem: enough Berge common}, we have the following
 %result.

For  $V_0\subseteq V(\mh)$ and  $v\in V(\mh)\setminus V_0$, recall $d_{\mh,V_0}(v)=|\{E\in E(\mh):v\in E,E\cap V_0\not=\emptyset\}|$. Set
\begin{equation*}
    \begin{aligned}
        X=&\{v\in V(\mh)\setminus V(\mg):d_{\mh,V(\mg)}(v)\geq 2\},\\
        Y=&\{v\in V(\mh)\setminus V(\mg):d_{\mh,V(\mg)}(v)\leq 1\}.
    \end{aligned}
\end{equation*}
Then $\mh[Y]$ is $\ber P_{\ell}$-free by Lemma \ref{pro: intersecting A with ell} and $\ell\ge 9$. By Theorem \ref{thm: turan of BPl} (1), we have
\begin{align}\label{eq4-1}|E(\mh[Y])|\leq \frac{1}{\ell}\binom{\ell}{r}|Y|.\end{align}
We define another hypergraph $\mh_1$ with %$V(\mh_1)\subseteq X\cup Y$ and
$$\mh_1=\{S\subseteq X\cup Y: \text{$|S|\geq 2$, $S\subseteq E$ for some $E\in \mh$ and $E\setminus S\subseteq V(\mg)$}\}.$$
For each $S\in \mh_1$, we have $2\le |S|\le r$.
%Notice that the size of the hyperedges of $\mh_1$ is at least $2$ and at most $r$, which means there $E\setminus S$ can be the empty set.
Then we have the following result.
\begin{lemma}\label{lem: no Berge P3}
    For every $S_1,S_2\in \mh_1$ with $S_1\cap S_2\neq \emptyset$, we have $(S_1\Delta S_2)\cap X=\emptyset$, where $S_1\Delta S_2=(S_1\setminus S_2)\cup (S_2\setminus S_1)$.
\end{lemma}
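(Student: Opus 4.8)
The plan is to argue by contradiction, producing a copy of $\ber P_{\ell}$ in $\mh$ with fewer than $\ell'$ defining vertices in $V(\mg)$, which contradicts Lemma \ref{pro: intersecting A with ell}. By symmetry in $S_1,S_2$ it suffices to rule out $x\in(S_1\setminus S_2)\cap X$. So suppose such an $x$ exists and fix $y\in S_1\cap S_2$ and $z\in S_2\setminus\{y\}$ (this uses $|S_2|\ge 2$). Pick $E_1,E_2\in\mh$ witnessing $S_1,S_2\in\mh_1$, so $\{x,y\}\subseteq E_1$, $\{y,z\}\subseteq E_2$, and $E_i\setminus S_i\subseteq V(\mg)$ for $i=1,2$. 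Since $x\in X\subseteq V(\mh)\setminus V(\mg)$ and $x\notin S_2$ while $E_2\setminus S_2\subseteq V(\mg)$, we get $x\notin E_2$, hence $E_1\ne E_2$. As $x\in X$, at least two hyperedges contain $x$ and meet $V(\mg)$, so we may choose $F\in\mh$ with $x\in F$, $F\cap V(\mg)\ne\emptyset$, and $F\ne E_1$; since $x\notin E_2$ also $F\ne E_2$. Fix $a\in F\cap V(\mg)$.

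Now $z,E_2,y,E_1,x,F,a$ is a Berge path $\mathcal R$ of length $3$ in $\mh$, with pairwise distinct defining vertices $z,y,x,a$ (the first three lie in $X\cup Y$ and are distinct because $x\notin S_2\ni y,z$ and $z\ne y$, while $a\in V(\mg)$) and defining hyperedges $E_2,E_1,F$. Since $V(\mg)=\bigcup_{H\in\mg}H$, choose $H\in\mg$ with $a\in H$ and label $H=\{v_1,\dots,v_{\ell'}\}$ with $v_1=a$. By Lemma \ref{pro: find berge P2l from mg} there is a Berge path of length $2\ell'-2$ with alternating defining vertices $v_1,\omega_1,v_2,\omega_2,\dots,\omega_{\ell'-1},v_{\ell'}$ and endpoints $v_1,v_{\ell'}$; inspecting its proof and using $|BCN(H)|\ge\alpha n$ with $n$ large, the fresh vertices $\omega_i$ may be taken outside the finite set $\{x,y,z\}\cup E_1\cup E_2\cup F$ and the hyperedges of this path outside $\{E_1,E_2,F\}$.

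Gluing $\mathcal R$ to this path at $a=v_1$ yields a Berge path $Q$ in $\mh$ of length $3+(2\ell'-2)=2\ell'+1$, whose defining vertices $z,y,x,v_1,\omega_1,\dots,\omega_{\ell'-1},v_{\ell'}$ are pairwise distinct and whose defining hyperedges are pairwise distinct. Since $\ell\in\{2\ell'-1,2\ell'\}$, deleting the last $2\ell'+1-\ell\in\{1,2\}$ hyperedges of $Q$ at the $v_{\ell'}$-endpoint leaves a copy $Q'$ of $\ber P_{\ell}$ from which the vertex $v_{\ell'}$ (and, when $\ell=2\ell'-1$, also $\omega_{\ell'-1}$) has been removed; the remaining defining vertices of $Q'$ in $V(\mg)$ are exactly $\{v_1,\dots,v_{\ell'-1}\}$, so $|DV_\mh(Q')\cap V(\mg)|=\ell'-1<\ell'$. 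This contradicts Lemma \ref{pro: intersecting A with ell}. Hence $(S_1\setminus S_2)\cap X=\emptyset$, and symmetrically $(S_2\setminus S_1)\cap X=\emptyset$, i.e. $(S_1\Delta S_2)\cap X=\emptyset$.

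The only delicate point is the middle step: one must guarantee that splicing $\mathcal R$ with the $\mg$-zigzag really gives a legitimate Berge path, i.e. that no defining vertex and no defining hyperedge is repeated between the bounded ``local'' data $x,y,z,a,E_1,E_2,F$ and the zigzag. This is routine rather than deep — the zigzag enjoys $\alpha n$ choices at each step against an $O(1)$-sized set to avoid — but it has to be spelled out, and one must also keep track of the parity of $\ell$ when trimming $Q$ to length exactly $\ell$.
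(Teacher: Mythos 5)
Your argument is correct and follows essentially the same route as the paper's proof: build a Berge $P_3$ through the offending $X$-vertex using $E_1,E_2$ and a third hyperedge into $V(\mg)$, splice it at $a$ with the alternating path supplied by Lemma \ref{pro: find berge P2l from mg}, and trim to a $\ber P_{\ell}$ having only $\ell'-1$ defining vertices in $V(\mg)$, contradicting Lemma \ref{pro: intersecting A with ell} (the paper uses the $\ber P_{2\ell'-1}$ variant with $u$ as an endpoint and trims a $P_{2\ell'+2}$, but this is only a bookkeeping difference). The one point to make explicit is that the fresh vertices $\omega_i$ should also be chosen outside $V(\mg)$ (possible since $|V(\mg)|\le C$ by Lemma \ref{lem: A is bounded}), so that the defining vertices of $Q'$ in $V(\mg)$ really are just $v_1,\dots,v_{\ell'-1}$; apart from this your distinctness checks are, if anything, more careful than the paper's.
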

\begin{proof} Suppose there are $S_1,S_2\in \mh_1$ with $S_1\cap S_2\neq \emptyset$ such that $(S_1\Delta S_2)\cap X\not=\emptyset$.
    Let $v_1\in S_1\cap S_2$ and $v_2\in (S_1\Delta S_2)\cap X$, say $v_2\in S_2\setminus S_1$. Assume $S_i\subseteq E_i$ for some $E_i\in \mh$ and $i=1,2$. Since  $d_{\mh,V(\mg)}(v_2)\geq 2$, there exist $u\in V(\mg)$ and  $E\in E(\mh)\setminus\{E_1,E_2\}$ with $\{u,v_2\}\subseteq E$. Since $|S_1|\ge 2$, there is $v_0\in S_1\setminus\{v_1\}$. Then $\{v_0,E_1,v_1,E_2,v_2,E,u\}$ is a $\ber P_{3}$ with defining vertices $\{v_0,v_1,v_2,u\}$.
    Assume $u\in H$ for some $H\in \mg$ and $H=\{u,w_2,\dots,w_{\ell'}\}$. By Lemma \ref{pro: find berge P2l from mg}, there is a copy of $\ber P_{2\ell'-1}$ contained $H$ as alternating defining vertices, and $u$ is an end vertices. Together with the $\ber P_{3}$, there is a copy of $\ber P_{2\ell'+2}$, denoted as $\mathcal{P}$. Then we have a $\ber P_{\ell}$, say $\mathcal{P}'$, with $V(\mathcal{P}')\subseteq V(\mathcal{P})$ but $|DV_\mh(\mathcal{P}')\cap V(\mg)|\le \ell'-1$,
     a contradiction with Lemma \ref{pro: intersecting A with ell}.
\end{proof}
\begin{lemma}\label{lem:E(X,Y)}
    For every $v\in Y$, $d_{\mh,X}(v)\leq 1$.
\end{lemma}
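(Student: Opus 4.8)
The plan is to argue by contradiction, assuming some vertex $v\in Y$ lies in at least two hyperedges each meeting $X$. Because $v\in Y$ means $v$ is in at most one hyperedge meeting $V(\mg)$, these two hyperedges can be arranged to connect $v$ to two distinct vertices of $X$, and then each of those $X$-vertices reaches into $V(\mg)$ (by definition of $X$, using two distinct hyperedges). So I would first build a short Berge path whose middle is $v$ and whose two endpoints sit in $V(\mg)$ — something like a $\ber P_{4}$ or $\ber P_5$ with defining vertices $u_1, x_1, v, x_2, u_2$, where $x_1,x_2\in X$ and $u_1,u_2\in V(\mg)$. One must take a little care that all the hyperedges used are genuinely distinct: $v$ contributes at most one hyperedge touching $V(\mg)$, but the two hyperedges through $v$ that I use touch $X$, not $V(\mg)$, so distinctness among the four or five hyperedges involved follows from choosing the $X$-to-$\mg$ edges to avoid the already-used ones, which is possible since each $x_i$ has $d_{\mh,V(\mg)}(x_i)\ge 2$.

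Next I would graft this short path onto Berge paths living inside $\mg$. The endpoints $u_1,u_2$ lie in hyperedges $H_1,H_2\in\mg$ (as $\ell'$-sets). If $H_1\ne H_2$, apply Lemma \ref{pro: find berge P2l from mg} to each: there is a $\ber P_{2\ell'-1}$ through $H_1$ with $u_1$ as an endpoint, and likewise through $H_2$ with $u_2$ as an endpoint, and we may assume these two paths are vertex-disjoint from each other and from the short path (using that $|V(\mg)|\le C$ is bounded while $n$ is huge, so fresh "connector" vertices outside $V(\mg)$ are always available in the construction of Lemma \ref{pro: find berge P2l from mg}). Concatenating gives a $\ber P_{2\ell'-1}+4+2\ell'-1 = \ber P_{4\ell'+2}$ or similar — in any case a Berge path of length well beyond $\ell=2\ell'$. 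Extracting a sub-$\ber P_\ell$ from it that uses at most $\ell'-1$ vertices of $V(\mg)$ among its defining vertices then contradicts Lemma \ref{pro: intersecting A with ell}. If instead $H_1=H_2$, then $u_1$ and $u_2$ both lie in the same $\ell'$-set $H$; here I would use the $\ber P_{2\ell'}$ (or $\ber P_{2\ell'-2}$) statement of Lemma \ref{pro: find berge P2l from mg} applied to $H$, arranging $u_1,u_2$ to be alternating defining vertices with the short path attached so that only a controlled portion of $H$ is forced into the defining-vertex set — again producing a long Berge path with too few $V(\mg)$-vertices among any length-$\ell$ subpath, contradicting Lemma \ref{pro: intersecting A with ell}. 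This mirrors exactly the argument used in the proof of Lemma \ref{lem: no Berge P3}.

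The main obstacle I expect is bookkeeping the disjointness and the defining-vertex count: one must check that the $\ber P_\ell$ extracted from the long concatenated path genuinely has at most $\ell'-1$ of its defining vertices in $V(\mg)$, so that Lemma \ref{pro: intersecting A with ell} is violated rather than vacuously satisfied. This is where the precise lengths $2\ell'-1$, $2\ell'-2$, $2\ell'$ in Lemma \ref{pro: find berge P2l from mg} matter — the short path contributes its own defining vertices $x_1,v,x_2$ (none in $V(\mg)$), and the $\mg$-side contributes roughly $\ell'$ vertices per $\ber P_{2\ell'-1}$; choosing which segment of the huge path to cut down to length exactly $\ell$ is the delicate step. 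The case $H_1=H_2$ needs slightly more care, but is handled identically to the $H_1\ne H_2$ case of Lemma \ref{lem: no Berge P3}. All other ingredients — distinctness of the finitely many hyperedges, existence of fresh vertices — are immediate because $n$ is large and $|V(\mg)|$ is a constant by Lemma \ref{lem: A is bounded}.
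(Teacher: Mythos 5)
Your overall skeleton --- build a short Berge path through $v$ anchored in $V(\mg)$, extend it via Lemma \ref{pro: find berge P2l from mg}, and contradict Lemma \ref{pro: intersecting A with ell} --- is the same as the paper's. But there is a genuine gap at the very first step. You claim the two hyperedges $E_1,E_2\ni v$ meeting $X$ ``can be arranged to connect $v$ to two distinct vertices of $X$,'' justified by $v\in Y$. That inference is a non sequitur: $v\in Y$ only says that at most one hyperedge through $v$ meets $V(\mg)$; it gives no control over $E_1\cap X$ and $E_2\cap X$, and nothing rules out $E_1\cap X=E_2\cap X=\{x\}$ for a single vertex $x$. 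In that case your symmetric $\ber P_4$/$\ber P_5$ with defining vertices $u_1,x_1,v,x_2,u_2$ simply does not exist. Likewise, your assertion that ``the two hyperedges through $v$ that I use touch $X$, not $V(\mg)$'' is unwarranted: one of $E_1,E_2$ may meet $V(\mg)$; the hypothesis $v\in Y$ only caps this at one. Indeed, that cap is precisely where $v\in Y$ is actually needed: the paper relabels so that $E_2\cap V(\mg)=\emptyset$, which guarantees that the free end vertex taken from $E_2$ lies outside $V(\mg)$ and hence does not inflate the count of defining vertices in $V(\mg)$. Your proposal uses the hypothesis only for the (incorrect) distinctness claim and never for this purpose.

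The paper's proof is one-sided and avoids all of this: pick a single $v_1\in E_1\cap X$, an edge $E\in\mh\setminus\{E_1,E_2\}$ joining $v_1$ to some $u\in V(\mg)$ (possible since $d_{\mh,V(\mg)}(v_1)\ge 2$), attach the $\ber P_{2\ell'-1}$ of Lemma \ref{pro: find berge P2l from mg} ending at $u$, and finish with $E_1$ and then $E_2$, ending at a vertex of $E_2\setminus\{v,v_1\}$, which is outside $V(\mg)$ because $E_2\cap V(\mg)=\emptyset$; a sub-$\ber P_\ell$ of the resulting path then has at most $\ell'-1$ defining vertices in $V(\mg)$, contradicting Lemma \ref{pro: intersecting A with ell}. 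Your two-sided plan can be repaired by falling back on exactly this construction in the degenerate case, but as written the key existence claim fails and the role of $v\in Y$ is misidentified. A secondary unjustified step: when $H_1\neq H_2$ but $H_1\cap H_2\neq\emptyset$ (hyperedges of $\mg$ may intersect), the two grafted $\ber P_{2\ell'-1}$'s cannot be made vertex-disjoint by choosing fresh connectors, so your long concatenation need not be a Berge path; this is harmless only because a sub-$\ber P_\ell$ taken from one side plus the middle already suffices --- which is again the one-sided argument.
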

 \begin{proof}
     Suppose there exists $v\in Y$ such that $d_{\mh,X}(v)\geq 2$. Let $E_1,E_2\in \mh$ with $v\in E_1\cap E_2$ and $E_i\cap X\not=\emptyset$, $i=1,2$.
     \par
          Since $v\in Y$, we can assume $E_2\cap V(\mg)=\emptyset$. Assume  $v_1\in E_1\cap X$. Then  there exists  $E\in \mh\setminus\{E_1,E_2\}$ and  $u\in V(\mg)$ such that $\{u,v_1\}\subseteq E$. Assume $u\in H$ for some $H\in \mg$ and $H=\{u,w_2,\dots,w_{\ell'}\}$. By Lemma \ref{pro: find berge P2l from mg}, there is a copy of $\ber P_{2\ell'-1}$ contained $H$ as alternating defining vertices, and $u$ is an end vertex. Together with $E, E_1$ and $E_2$, there is a copy of $\ber P_{\ell}$, say $\mathcal{P}'$, with $V(\mathcal{P}')\subseteq V(\mathcal{P})$ but $|DV_\mh(\mathcal{P}')\cap V(\mg)|\le \ell'-1$,
     a contradiction with Lemma \ref{pro: intersecting A with ell}.
  \end{proof}
    % \par
    % Suppose $E_1\cap A=\emptyset$ and $E_2\cap A=\emptyset$. by the definition of $X$ we know $d_{\mh,A}(v_1)\geq 2$ therefore there exist a distinct hyperedge $E$ and a vertex $u\in A$ such that $\{u,v_1\}\subseteq E$. Then the same as above.

%According to Lemma \ref{lem:E(X,Y)} we immediately have $|E(X,Y)|\leq |Y|$.
\par
For $0\le t \le r$, let $$\mathcal{T}_t:=\{E\in \mh[V(\mg)\cup X]: |E\cap V(\mg)|=t\}.$$

\noindent Now, we consider subsets of $\mh_1$ described as follows.
\begin{equation*}
    \begin{aligned}
        \mathcal{U}_{2}=&\{S\subseteq X: \text{$|S|=2$ , $S\subseteq E$ for some $E\in \mh[V(\mg)\cup X]$ and $E\setminus S\subseteq V(\mg)$}\},\\
         \mathcal{U}_{3+}=&\{S\subseteq X: \text{$|S|\geq 3$ , $S\subseteq E$ for some $E\in \mh[V(\mg)\cup X]$ and $E\setminus S\subseteq V(\mg)$}\}.
    \end{aligned}
\end{equation*}
\noindent Then, we have the following property.
\begin{lemma}\label{pro: degree of 3-set is at most 2}
    For every $t$-set $S\in \mathcal{U}_{3+}$, $|\{E\in \mathcal{T}_{r-t}:S\subseteq E \}|\leq 2$,  where $3\leq t\leq r$.
\end{lemma}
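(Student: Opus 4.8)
The plan is to argue by contradiction, in the style of Lemmas~\ref{lem: no Berge P3} and~\ref{lem:E(X,Y)}. First I would dispose of the degenerate case $t=r$: then $\mathcal{T}_0$ consists of $r$-edges contained in $X$, and the only such edge containing the $r$-set $S$ is $S$ itself, so at most one edge of $\mathcal{T}_0$ contains $S$. For $3\le t\le r-1$ (in particular $r\ge 4$) I would assume that three \emph{distinct} edges $E_1,E_2,E_3\in\mathcal{T}_{r-t}$ contain $S$, and write $T_i=E_i\setminus S$, noting that $T_i=E_i\cap V(\mg)\subseteq V(\mg)$ with $|T_i|=r-t\ge 1$ and $T_1,T_2,T_3$ pairwise distinct. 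The goal is to assemble these three edges into a copy of $\ber P_\ell$ that has fewer than $\ell'$ defining vertices in $V(\mg)$, contradicting Lemma~\ref{pro: intersecting A with ell}.

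The first real step is to choose distinct $x_1,x_2,x_3\in S$ (possible since $t\ge 3$) and a vertex $u\in T_3\subseteq V(\mg)$, and observe that $x_1\,E_1\,x_2\,E_2\,x_3\,E_3\,u$ is a copy of $\ber P_3$: indeed $\{x_1,x_2\},\{x_2,x_3\}\subseteq S$ lie in $E_1,E_2$ respectively, $\{x_3,u\}\subseteq E_3$, and $E_1,E_2,E_3$ are distinct. Next, since $u$ lies in some $H=\{v_1,\dots,v_{\ell'}\}\in\mg$ (say $v_1=u$), I would invoke Lemma~\ref{pro: find berge P2l from mg} to obtain a copy of $\ber P_{2\ell'-1}$ with the vertices of $H$ as alternating defining vertices and $u$ as an end vertex. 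Crucially, because $|BCN(H)|\ge\alpha n$ while $V(\mg)$ is bounded by Lemma~\ref{lem: A is bounded} and $E_1\cup E_2\cup E_3$ has at most $3r$ vertices, this $\ber P_{2\ell'-1}$ can be taken so that all of its auxiliary (non-$H$) defining vertices avoid $V(\mg)\cup E_1\cup E_2\cup E_3$ and its defining hyperedges differ from $E_1,E_2,E_3$ — exactly as in the proofs of Lemmas~\ref{lem: no Berge P3} and~\ref{lem:E(X,Y)}. Gluing the $\ber P_3$ to this $\ber P_{2\ell'-1}$ at their common end vertex $u$ then produces a copy $\mathcal{P}$ of $\ber P_{2\ell'+2}$ whose defining-vertex sequence is $x_1,x_2,x_3,v_1(=u),\omega_1,v_2,\omega_2,\dots,v_{\ell'},\omega_{\ell'}$, of which only $v_1,\dots,v_{\ell'}$ lie in $V(\mg)$.

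Finally, since $2\ell'\ge\ell$, I would let $\mathcal{P}'$ be the copy of $\ber P_\ell$ formed by the first $\ell$ edges of $\mathcal{P}$; its defining vertices are the first $\ell+1$ terms of the sequence, and as $v_m$ occupies position $2m+2$, only $v_1,\dots,v_{\lfloor(\ell-1)/2\rfloor}$ survive, with $\lfloor(\ell-1)/2\rfloor=\ell'-1$ in both parities of $\ell$. Hence $|DV_\mh(\mathcal{P}')\cap V(\mg)|\le\ell'-1<\ell'$, contradicting Lemma~\ref{pro: intersecting A with ell}, which yields $|\{E\in\mathcal{T}_{r-t}:S\subseteq E\}|\le 2$.

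The step I expect to need the most care is the construction of $\mathcal{P}$: one must verify that the glued object really is a Berge path, i.e.\ that the auxiliary vertices and defining edges supplied by Lemma~\ref{pro: find berge P2l from mg} can be chosen disjoint from the $\ber P_3$ and from $V(\mg)$, which is precisely where the linear lower bound $|BCN(H)|\ge\alpha n$ and the finiteness of $V(\mg)$ are used. This is also why the statement permits $2$ rather than $1$: with only two edges $E_1,E_2$ through $S$ one would need a \emph{further} edge through $x_3$ hitting $V(\mg)$ to reach $\mg$, but $E_1$ and $E_2$ themselves already meet $V(\mg)$ and could be the only such edges, so the argument cannot be pushed below $2$.
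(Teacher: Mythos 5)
Your proof is correct and takes essentially the same route as the paper, whose proof simply says ``by the same argument as Lemma~\ref{lem: no Berge P3}'': use the three distinct edges through $S$ to form a Berge $P_3$ ending at a vertex $u\in V(\mg)$, extend it via Lemma~\ref{pro: find berge P2l from mg}, and truncate to a copy of $\ber P_\ell$ with at most $\ell'-1$ defining vertices in $V(\mg)$, contradicting Lemma~\ref{pro: intersecting A with ell}. Your separate treatment of $t=r$ and the explicit disjointness bookkeeping for the auxiliary vertices merely spell out details the paper leaves implicit.
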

\begin{proof} Suppose there a $t$-set $S\in \mathcal{U}_{3+}$ such that $|\{E\in \mathcal{T}_{r-t}:S\subseteq E \}|\geq 3$. By the same argument as the proof of
     Lemma \ref{lem: no Berge P3}, there is a copy of $\ber P_{\ell}$ that contains only $\ell'-1$ defining vertices in $V(\mg)$, a contradiction with the Lemma \ref{pro: intersecting A with ell}.
\end{proof}
Now, we partition  $\mathcal{U}_2$ into two subsets.
\begin{equation*}
    \begin{aligned}
        \mathcal{U}_2^-=&\{S\in \mathcal{U}_2: |\{E\in \mathcal{T}_{r-2}:S\subseteq E\}|\leq 2\}~ \text{~and}\\
         \mathcal{U}_2^+=&\{S\in \mathcal{U}_2: |\{E\in \mathcal{T}_{r-2}:S\subseteq E\}|\geq 3\}.
    \end{aligned}
\end{equation*}
Correspondingly, we partition $\mathcal{T}_{r-2}$ into two sets.
\begin{equation*}
    \begin{aligned}
        \mathcal{T}_{r-2}^-=&\{E\in \mathcal{T}_{r-2}: E\cap X\in \mathcal{U}_2^- \} ~\text{~and}\\
        \mathcal{T}_{r-2}^+=&\{E\in \mathcal{T}_{r-2}: E\cap X\in \mathcal{U}_2^+ \}.
    \end{aligned}
\end{equation*}
By Lemma \ref{lem: no Berge P3}, the hyperedges in $\mathcal{U}:=\mathcal{U}_2\cup \mathcal{U}_{3+}$ are pairwise disjoint. Thus $|\mathcal{U}|\leq \frac{|X|}{2}$. By Lemma \ref{pro: degree of 3-set is at most 2}, we have
$$\left|\left(\cup_{t=0}^{r-3} \mathcal{T}_t\right)\cup \mathcal{T}_{r-2}^-\right|\leq 2|\mathcal{U}|\leq |X|.$$
Next, we will bound the sets $\mathcal{U}_2^+$.
\begin{lemma}\label{lem: U_2^+}
    We have $\left|\mathcal{U}_2^+\right|\leq \binom{|V(\mg)|}{r-2}$.
\end{lemma}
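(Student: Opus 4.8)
The plan is to bound $|\mathcal{U}_2^+|$ by a double counting whose crux is a bound of $3$ on the multiplicity with which any $(r-2)$-subset of $V(\mg)$ can occur. For $S\in\mathcal{U}_2^+$ put $\mathcal{A}_S=\{E\cap V(\mg):E\in\mathcal{T}_{r-2},\ E\cap X=S\}$. Since any $E\in\mathcal{T}_{r-2}$ with $E\cap X=S$ satisfies $E=S\cup(E\cap V(\mg))$ and $|E\cap V(\mg)|=r-2$, distinct such hyperedges give distinct members of $\mathcal{A}_S$, so $|\mathcal{A}_S|=|\{E\in\mathcal{T}_{r-2}:S\subseteq E\}|\ge 3$ by the definition of $\mathcal{U}_2^+$, and moreover $\mathcal{A}_S\subseteq\binom{V(\mg)}{r-2}$. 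Hence
\[
3|\mathcal{U}_2^+|\ \le\ \sum_{S\in\mathcal{U}_2^+}|\mathcal{A}_S|\ =\ \sum_{T\in\binom{V(\mg)}{r-2}}\bigl|\{S\in\mathcal{U}_2^+:S\cup T\in\mh\}\bigr|,
\]
so it suffices to prove the \textbf{Key Claim}: for every $(r-2)$-set $T\subseteq V(\mg)$ there are at most three $S\in\mathcal{U}_2^+$ with $S\cup T\in\mh$. The Key Claim then yields $3|\mathcal{U}_2^+|\le 3\binom{|V(\mg)|}{r-2}$, which is exactly what we want.

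To prove the Key Claim I would argue by contradiction: suppose $S_1,S_2,S_3,S_4\in\mathcal{U}_2^+$ are distinct with $E_i:=S_i\cup T\in\mh$. By Lemma~\ref{lem: no Berge P3} the sets in $\mathcal{U}=\mathcal{U}_2\cup\mathcal{U}_{3+}$ are pairwise disjoint, so the $S_i$ are pairwise disjoint $2$-subsets of $X$ and the $E_i$ are distinct with $E_i\cap E_j=T$ for $i\neq j$; also each vertex of each $S_i$ has $d_{\mh,V(\mg)}(\cdot)\ge 3$, since the $\ge 3$ hyperedges $S_i\cup T'$ $(T'\in\mathcal{A}_{S_i})$ all meet $V(\mg)$ (recall $|T'|=r-2\ge 1$). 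Choose $t_0\in T$ and $H_0\in\mg$ with $t_0\in H_0$. By Lemma~\ref{pro: find berge P2l from mg} there is a $\ber P_{2\ell'-1}$ having $H_0$ as its alternating defining vertices with $t_0$ as an end vertex; since $|BCN(H_0)|$ is linear in $n$ while $V(\mg)$ is bounded (Lemma~\ref{lem: A is bounded}), we may take its $\ell'-1$ non-$H_0$ defining vertices outside $V(\mg)$ and disjoint from the bounded ``core'' set $\bigcup_i\bigcup_{T'\in\mathcal{A}_{S_i}}(S_i\cup T')$. The aim is then to attach at its end $t_0$ a short connector walking only through $X$ — using the hyperedges $E_i$ and the spare hyperedges $S_i\cup T'$, which pairwise intersect either inside some $S_i\subseteq X$ or inside $T$, the fourth pair providing the needed slack — so as to get a Berge path $\mathcal{P}$ with length $\ge\ell+2$ whose defining vertices meet $V(\mg)$ in at most $\ell'$ vertices, arranged so that at least two of these lie within the last two edges on the $H_0$‑side (or, if a second excursion into $V(\mg)$ cannot be avoided, one instead routes through a second $\mg$‑hyperedge $H_1$ and chooses the length‑$\ell$ window to straddle the connector). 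Deleting two end‑edges accordingly produces a copy $\mathcal{P}'$ of $\ber P_\ell$ with $V(\mathcal{P}')\subseteq V(\mathcal{P})$ but $|DV_\mh(\mathcal{P}')\cap V(\mg)|\le\ell'-1$, contradicting Lemma~\ref{pro: intersecting A with ell}.

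\textbf{The main obstacle} is precisely this path surgery. Unlike in Lemma~\ref{lem: no Berge P3}, where the two participating $\mh_1$-sets overlapped inside $X\cup Y$ and the connecting $\ber P_3$ had \emph{no} defining vertex in $V(\mg)$, here the pairs $S_i$ are forced to be pairwise disjoint and $E_i\cap E_j=T\subseteq V(\mg)$, so every walk passing from $E_i$ to $E_j$ meets $V(\mg)$; keeping the count of $V(\mg)$-defining vertices below $\ell'$ while still having a path strictly longer than $\ell$ is therefore delicate, and it forces one to be careful about which vertex of $T$ and which $\mg$-hyperedge(s) to route through and exactly where to cut. Verifying that all fresh vertices and hyperedges used along the way are genuinely new, that $\mathcal{P}'$ has length exactly $\ell$, and that it meets $V(\mg)$ in at most $\ell'-1$ defining vertices is the only real work; the counting reduction above and the application of Lemma~\ref{pro: intersecting A with ell} are routine. (It is plausible that one can even show the multiplicity is at most $2$, but $3$ is all the counting requires.)
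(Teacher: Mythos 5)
Your double-counting reduction is sound (in fact the paper implicitly proves the stronger statement that for distinct $S_1,S_2\in\mathcal{U}_2^+$ the families $\mathcal{A}_{S_1}$ and $\mathcal{A}_{S_2}$ are disjoint, i.e.\ multiplicity one rather than three), but that reduction is the routine part: the entire content of Lemma \ref{lem: U_2^+} is your ``Key Claim,'' and you do not prove it. What you give is a plan (``attach at $t_0$ a short connector walking only through $X$ \dots the fourth pair providing the needed slack \dots or route through a second $\mg$-hyperedge''), and you yourself concede that verifying freshness of vertices and hyperedges, the exact length, and the bound $|DV_\mh(\mathcal{P}')\cap V(\mg)|\le \ell'-1$ is ``the only real work.'' That is precisely where the difficulty sits, and the sketch does not survive scrutiny as stated: by Lemma \ref{lem: no Berge P3} the pairs $S_1,\dots,S_4$ are pairwise disjoint and $E_i\cap E_j=T\subseteq V(\mg)$ for $i\neq j$, so every passage of a Berge path from one pair to another consumes a defining vertex of $T$; when $r=3$ one has $|T|=1$, so at most one such transition is available at all (defining vertices cannot repeat), and replacing transitions by the ``spare'' hyperedges $S_i\cup T'$ injects further $V(\mg)$-defining vertices. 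It is therefore not clear that a path of length at least $\ell+2$ with at most $\ell'$ defining vertices in $V(\mg)$, two of them confined to removable end edges, can be produced, and none of this arithmetic is carried out. That is a genuine gap, not a presentational one.

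For comparison, the paper needs only two pairs and no long connector, and the device your sketch is missing is to exploit the threefold multiplicity \emph{inside a single pair}: if $|\mathcal{U}_2^+|>\binom{|V(\mg)|}{r-2}$, pigeonhole gives $S_1,S_2\in\mathcal{U}_2^+$ and $E_1\supseteq S_1$, $E_2\supseteq S_2$ with $E_1\setminus S_1=E_2\setminus S_2=T$; since $S_1\in\mathcal{U}_2^+$ there are $E_3,E_4\in\mathcal{T}_{r-2}\setminus\{E_1\}$ through $S_1$, and one picks $u\in T$ and $u_1\in(E_4\cap V(\mg))\setminus\{u\}$. With $S_1=\{x_1,x_2\}$ and $y\in S_2\setminus S_1$, the sequence $\{y,E_2,u,E_1,x_1,E_3,x_2,E_4,u_1\}$ is a $\ber P_4$ whose only defining vertices in $V(\mg)$ are $u$ and $u_1$; attaching at $u_1$ a $\ber P_{2\ell'-4}$ with alternating defining vertices inside an $\mg$-edge $H\ni u_1$ (built as in Lemma \ref{pro: find berge P2l from mg}, with the new vertices taken among the many Berge-common neighbours of $H$) and taking a suitable length-$\ell$ window, the paper obtains a copy of $\ber P_\ell$ meeting $V(\mg)$ in only $\ell'-1$ defining vertices, contradicting Lemma \ref{pro: intersecting A with ell}. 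Traversing both vertices of $S_1$ via two of its hyperedges and exiting into $V(\mg)$ at the fresh vertex $u_1$ is exactly what keeps the count of $V(\mg)$-defining vertices low while gaining length; without this step (or a fully worked-out substitute for your four-pair connector) your proof of the Key Claim, and hence of the lemma, is incomplete.
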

\begin{proof}
 Suppose $|\mathcal{U}_{2}^{+}|\geq \binom{|V(\mg)|}{r-2}+1$. Then there exist $S_1,S_2\in \mathcal{U}_{2}^{+}$ and  $E_1,E_2\in \mathcal{T}_{r-2}$ such that $S_1\subseteq E_1, S_2\subseteq E_2$ and $E_1\setminus S_1=E_2\setminus S_2$. Since $|S_1|=|S_2|=2$ and $r\geq 3$, there exist a vertex $u\in E_1\setminus S_1$. Since $S_1\in\mathcal{U}_{2}^{+}$, there exist  $E_3,E_4\in \mathcal{T}_{r-2}\setminus\{E_1\}$ such that $S_1\subseteq E_3$   and $S_1\subseteq E_4$. Then there exist  $u_1\in (E_4\cap V(\mg))\setminus\{u\}$ and  $H\in E(\mg)$ such that $u_1\in H$.
Let $x_1,x_2\in S_1$ and $y\in S_2\setminus S_1$. Then $\{y,E_2,u,E_1,x_1,E_3,x_2,E_4,u_1\}$ is a  $\ber P_{4}$ with defining vertices $\{y,x_1,x_2,u,u_1\}$.
  By the same argument as the proof of Lemma \ref{pro: find berge P2l from mg}, there is a copy of $\ber P_{2\ell'-4}$ contained $u_1$ as an end vertex and $\{u_1,\ldots,u_{\ell'-1}\}\subseteq H\setminus\{u\}$ as its defining vertices. Together with the $\ber P_{4}$, we have a copy of $\ber P_{\ell}$, say $\mathcal{P}'$, such that $|DV_\mh(\mathcal{P}')\cap V(\mg)|=\ell'-1$, a contradiction with Lemma \ref{pro: intersecting A with ell}.
\end{proof}

Since $\left|\mathcal{T}_{r-2}^+\right|\leq \binom{|V(\mg)|}{r-2}\left|\mathcal{U}_2^+\right|$, by Lemma \ref{lem: U_2^+}, we have
$\left|\mathcal{T}_{r-2}^+\right|\leq \binom{|V(\mg)|}{r-2}^2.$
Then, the above inequalities implies that
\begin{equation}\label{eq: number of edges 1}
    \begin{aligned}
|E(V(\mg),X)|+|\mg|+|\mh[X]|=&|\bigcup_{t=0}^{r-3} \mathcal{T}_t|+\left|\mathcal{T}_{r-2}^-\right|+\left|\mathcal{T}_{r-2}^+\right|+\left|\mathcal{T}_{r-1}\right|+|\mg|\\
        \leq & |X|+\binom{|V(\mg)|}{r-2}^2+\binom{|V(\mg)|}{r}+|\mathcal{T}_{r-1}|.
    \end{aligned}
\end{equation}
Define $E[V(\mg),Y]=\{E\in E(\mh):\text{$E\cap V(\mg)\neq \emptyset$ and $E\cap Y\neq \emptyset $} \}$. We have
\begin{equation}\label{eq: E(A,Y)}
    \begin{aligned}
        |E[V(\mg),Y]|\leq |Y|.
    \end{aligned}
\end{equation}
For every vertex $v\in X$, let
$X^+=\left\{v\in X: d_{\mathcal{T}_{r-1}}(v)\geq \binom{k\ell'-1}{r-1}-\frac{1}{2}\binom{k\ell'-1}{r-2}\right\}.$
 Then we have the following result.
\begin{lemma}\label{55-1}
    When $n$ is large enough, $|X^+|\geq \frac{1}{2}\binom{|V(\mg)|}{r-1}^{-1}n$.
\end{lemma}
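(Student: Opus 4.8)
The plan is a careful count of $|E(\mh)|$ that exploits the structure already in place. Partition $E(\mh)$ into four pairwise disjoint classes according to how a hyperedge $E$ meets the disjoint sets $V(\mg),X,Y$ (whose union is $V(\mh)$): (i) $E\subseteq Y$; (ii) $E\subseteq V(\mg)\cup X$; (iii) $E\cap V(\mg)\neq\emptyset$ and $E\cap Y\neq\emptyset$; (iv) $E\cap V(\mg)=\emptyset$, $E\cap X\neq\emptyset$ and $E\cap Y\neq\emptyset$. Class (i) is $\mh[Y]$, bounded by $\frac1\ell\binom\ell r|Y|$ via (\ref{eq4-1}). Class (iii) is $E[V(\mg),Y]$, bounded by $|Y|$ via (\ref{eq: E(A,Y)}). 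Class (iv) is exactly $E(X,Y)$; since each $v\in Y$ lies in at most one hyperedge meeting $X$ by Lemma~\ref{lem:E(X,Y)}, and each $E\in E(X,Y)$ meets $Y$, this class contains at most $|Y|$ hyperedges. Class (ii) is $\mh[V(\mg)\cup X]$; inequality (\ref{eq: number of edges 1}) bounds $|E(V(\mg),X)|+|\mh[X]|$ by $|X|+|\mathcal{T}_{r-1}|+O(1)$, and adding $|\mh[V(\mg)]|\le\binom{|V(\mg)|}{r}=O(1)$ gives $|\mh[V(\mg)\cup X]|\le|X|+|\mathcal{T}_{r-1}|+O(1)$. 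Throughout, each $O(1)$ depends only on $r,k,\ell$ because $|V(\mg)|\le C$ by Lemma~\ref{lem: A is bounded}.

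The next step is to bound $|\mathcal{T}_{r-1}|$. Every $E\in\mathcal{T}_{r-1}$ has $|E\cap V(\mg)|=r-1$ and $E\subseteq V(\mg)\cup X$, so $|E\cap X|=1$, giving $|\mathcal{T}_{r-1}|=\sum_{v\in X}d_{\mathcal{T}_{r-1}}(v)$; moreover a hyperedge of $\mathcal{T}_{r-1}$ through a fixed vertex is that vertex together with an $(r-1)$-subset of $V(\mg)$, so $d_{\mathcal{T}_{r-1}}(v)\le\binom{|V(\mg)|}{r-1}$ for every $v$. Splitting $X=X^+\cup(X\setminus X^+)$ and using the definition of $X^+$,
\[
|\mathcal{T}_{r-1}|\le|X^+|\binom{|V(\mg)|}{r-1}+|X|\left(\binom{k\ell'-1}{r-1}-\tfrac12\binom{k\ell'-1}{r-2}\right).
\]
Summing the four class bounds and using $|X|+|Y|=n-|V(\mg)|\le n$,
\[
|E(\mh)|\le\mu\, n+|X^+|\binom{|V(\mg)|}{r-1}+O(1),\qquad \mu:=\max\left\{\tfrac1\ell\binom\ell r+2,\ 1+\binom{k\ell'-1}{r-1}-\tfrac12\binom{k\ell'-1}{r-2}\right\}.
\]

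Finally I would compare this with the lower bound $|E(\mh)|\ge\binom{k\ell'-1}{r-1}n+O(1)$ from (\ref{3-2}). Lemma~\ref{last} gives $\frac1\ell\binom\ell r+\tfrac52<\binom{k\ell'-1}{r-1}$, so the first term of $\mu$ is at most $\binom{k\ell'-1}{r-1}-\tfrac12$ with a positive gap depending only on $k,r,\ell$; and since $\binom{k\ell'-1}{r-2}\ge k\ell'-1\ge 8$ (as $k,r\ge 3$ and $\ell'\ge r$), the second term of $\mu$ is at most $\binom{k\ell'-1}{r-1}-3$. Hence $\mu\le\binom{k\ell'-1}{r-1}-\tfrac12-\varepsilon$ for some constant $\varepsilon=\varepsilon(k,r,\ell)>0$, and combining the two bounds on $|E(\mh)|$ and rearranging yields
\[
|X^+|\binom{|V(\mg)|}{r-1}\ge\left(\tfrac12+\varepsilon\right)n-O(1)\ge\tfrac12\, n
\]
once $n$ is large enough (depending on $k,r,\ell$), i.e. $|X^+|\ge\tfrac12\binom{|V(\mg)|}{r-1}^{-1}n$, as claimed.

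The argument is pure bookkeeping, so no single step is a genuine obstacle; the care required is in (a) checking that classes (i)--(iv) really partition $E(\mh)$ and that class (iv) must be controlled through Lemma~\ref{lem:E(X,Y)} rather than absorbed into $\mh[V(\mg)\cup X]$, and (b) keeping the additive constants straight so that $\mu$ lies below $\binom{k\ell'-1}{r-1}$ by strictly more than $\tfrac12$ — exactly the margin Lemma~\ref{last} supplies for the $Y$-contribution, while the trivial estimate $\binom{k\ell'-1}{r-2}\ge 4$ handles the $X$-contribution.
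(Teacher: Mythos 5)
Your proposal is correct and follows essentially the same route as the paper: the same partition of $E(\mh)$ into the pieces controlled by (\ref{eq4-1}), (\ref{eq: number of edges 1}), (\ref{eq: E(A,Y)}) and Lemma \ref{lem:E(X,Y)}, the same splitting of $|\mathcal{T}_{r-1}|$ over $X^+$ and $X\setminus X^+$ via the defining threshold of $X^+$ and $d_{\mathcal{T}_{r-1}}(v)\le\binom{|V(\mg)|}{r-1}$, and the same comparison with (\ref{3-2}) using the margin from Lemma \ref{last}. The only difference is presentational: the paper argues by contradiction from $|X^+|<\frac{1}{2}\binom{|V(\mg)|}{r-1}^{-1}n$, while you rearrange the same inequality directly to isolate $|X^+|$.
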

\begin{proof} Suppose  $|X^+|< \frac{1}{2}\binom{|V(\mg)|}{r-1}^{-1}n$.
    By Lemma \ref{lem:E(X,Y)}, $|E(X,Y)|\leq |Y|$.  By (\ref{eq4-1}),   (\ref{eq: number of edges 1}) and (\ref{eq: E(A,Y)}), we have
    \begin{equation*}
        \begin{aligned}
    |\mh|=&|E(V(\mg),X)|+|\mg|+|\mh[X]|+|E(X,Y)|+|\mh[Y]|+|E[V(\mg),Y]|\\
            \leq & |X|+\binom{|V(\mg)|}{r-2}^2+\binom{|V(\mg)|}{r}+|\mathcal{T}_{r-1}|+\left(\frac{1}{\ell}\binom{\ell}{r}+2\right)|Y|.
        \end{aligned}
    \end{equation*}
    By Lemma \ref{lem: A is bounded}, $|V(\mg)|\leq C$, where $C=C(k,r,\ell)>0$
     is a constant.
Since $|\mathcal{T}_{r-1}|\leq \binom{|V(\mg)|}{r-1}|X^+|+(|X|-|X^+|)\left(\binom{k\ell'-1}{r-1}-\frac{1}{2}\binom{k\ell'-1}{r-2}-1\right)$,  we have
\begin{equation*}
    \begin{aligned}
        |\mh| &\leq  |X|+\binom{|V(\mg)|}{r-1}|X^+|+(|X|-|X^+|)\left(\binom{k\ell'-1}{r-1}-\frac{1}{2}\binom{k\ell'-1}{r-2}-1\right)\\
        &+\left(\frac{1}{\ell}\binom{\ell}{r}+2\right)|Y|+O_{r,k,\ell}(1)\\
        &< \frac{1}{2}n+\left(\binom{k\ell'-1}{r-1}-\frac{1}{2}\binom{k\ell'-1}{r-2}\right)|X|+\left(\frac{1}{\ell}\binom{\ell}{r}+2\right)|Y|+O_{r,k,\ell}(1)\\
        &\leq \frac{1}{2}n+\max\left\{\binom{k\ell'-1}{r-1}-\frac{1}{2}\binom{k\ell'-1}{r-2},\frac{1}{\ell}\binom{\ell}{r}+2\right\}n+O_{r,k,\ell}(1)\\
        &<\binom{k\ell'-1}{r-1}(n-k\ell'+1),
    \end{aligned}
\end{equation*}
a contradiction with (\ref{3-2}). The last inequality holds by Lemma \ref{last}.\end{proof}

By Lemma \ref{55-1}, $X^+\not=\emptyset$.
Then for every $v\in X^+$, we have $d_{\mh,V(\mg)}(v)\geq k\ell'-1$; otherwise $d_{\mathcal{T}_{r-1}}(v)\leq \binom{k\ell'-2}{r-1}<\binom{k\ell'-1}{r-1}-\frac{1}{2}\binom{k\ell'-1}{r-2}$, a contradiction with the definition of $X^{+}$. Thus
$|V(\mg)|\geq k\ell'-1$.
For every vertex $v\in X^+$,  define $\mg_v=\{E\setminus\{v\}: E\in \mathcal{T}_{r-1} \text{~and~} v\in E\}$. Then $\mg_v$ is  $(r-1)$-uniform and $V(\mg_v)\subseteq V(\mg)$. Let $M_{r,k,\ell}=|\{\mg_v: k\ell'-1\le |V(\mg_v)|\le |V(\mg)|,v\in X^+\}|$.

By Lemmas \ref{lem: A is bounded} and \ref{55-1},
  there exist a subset $X^{++}\subseteq X^+$ with  $|X^{++}|\ge\frac{1}{2}\binom{|V(\mg)|}{r-1}^{-1}M_{r,k,\ell}^{-1}n$ and an $(r-1)$-uniform hyperedges $\mathcal{G}_0$ such that $\mathcal{G}_v=\mathcal{G}_0$ for every $v\in X^{++}$, where $k\ell'-1\leq |V(\mg_0)|\leq |V(\mg)|$.

Now, we consider the hypergraph $\mathcal{G}_0$.
 Recall that two vertices $u,v\in V(\mg_0)$ is a good pair if there exist two hyperedges $E_1,E_2\in E(\mg_0)$ such that $u\in E_1$ and $v\in E_2$. %Then, one can notice that if $u,v$ is a good pair in $\mg_0$, then for every $w\in X^{++}$, $w$ is a Berge-common neighbour of $u,v$.
 We have the following result.
 \begin{lemma}\label{lem: bound V(G0)}
     We have $|V(\mg_0)|=k\ell'-1$.
 \end{lemma}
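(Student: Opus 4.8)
The plan is to prove $|V(\mg_0)|=k\ell'-1$ by contradiction: assuming $|V(\mg_0)|\ge k\ell'$, I would build inside $\mh$ a copy of $\ber kP_{2\ell'}$ (that is, $k$ copies of $\ber P_{2\ell'}$ with pairwise distinct defining vertices and pairwise distinct defining hyperedges), which, since $\ell\le 2\ell'$, contains a $\ber kP_\ell$ and contradicts the choice of $\mh$.

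First I would record the ingredients. For any $v\in X^{++}$ we have $|\mg_0|=|\mg_v|=d_{\mathcal{T}_{r-1}}(v)\ge \binom{k\ell'-1}{r-1}-\tfrac12\binom{k\ell'-1}{r-2}\ge \binom{(k-1)\ell'}{r-1}+1\ge 2$ by Lemma~\ref{lem: the ineuqlaity in sec 4 (1)} applied with $\ell$ replaced by $\ell'$ (legitimate since $\ell'\ge r\ge 3$ and $k\ge 2$). Hence by Lemma~\ref{lem: good order}, $V(\mg_0)$ has a good order $v_1,\dots,v_m$, where $m=|V(\mg_0)|\ge k\ell'$. Next, by the definition of $\mg_v$, for every $v\in X^{++}$ and every $E\in\mg_0=\mg_v$ the set $\{v\}\cup E$ is a hyperedge of $\mh$. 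This gives two routing facts: (i) for any $a\in V(\mg_0)$ and any $w\in X^{++}$ there is a hyperedge of $\mh$ containing $\{w,a\}$ (choose $E\in\mg_0$ with $a\in E$); (ii) for any good pair $(a,b)$ of $\mg_0$ and any $w\in X^{++}$ there are two \emph{distinct} hyperedges of $\mh$ containing $\{w,a\}$ and $\{w,b\}$ respectively (take $E,E'\in\mg_0$ with $a\in E$, $b\in E'$, $E\ne E'$). Finally, since $|X^{++}|\ge \tfrac12\binom{|V(\mg)|}{r-1}^{-1}M_{r,k,\ell}^{-1}n$ and $|V(\mg)|\le C$ by Lemma~\ref{lem: A is bounded}, we have $|X^{++}|\ge k(\ell'+1)$ once $n$ is large.

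Now split the first $k\ell'$ vertices of the good order into $k$ consecutive blocks $B_j=\{v_{(j-1)\ell'+1},\dots,v_{j\ell'}\}$; consecutive vertices inside each $B_j$ remain good pairs of $\mg_0$. Choose $k(\ell'+1)$ pairwise distinct vertices of $X^{++}$ and assign $\ell'+1$ of them, $w_0^j,\dots,w_{\ell'}^j$, to each block $j$. Using (i) to attach $w_0^j$ to the first vertex and $w_{\ell'}^j$ to the last vertex of $B_j$, and (ii) to route $w_i^j$ between the $i$-th and $(i+1)$-th vertices of $B_j$, I obtain a $\ber P_{2\ell'}$ with defining vertices $w_0^j,v_{(j-1)\ell'+1},w_1^j,\dots,w_{\ell'-1}^j,v_{j\ell'},w_{\ell'}^j$ and $2\ell'$ defining hyperedges, each of the form $\{w_i^j\}\cup E$ with $E\in\mg_0$. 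Since $E\subseteq V(\mg_0)\subseteq V(\mg)$ while the connectors lie in $X=V(\mh)\setminus V(\mg)$, every such hyperedge contains exactly one connector; as the connectors are pairwise distinct and, for a fixed internal connector, the two hyperedges attached to it use distinct $E\ne E'$, all $2k\ell'$ hyperedges used over the $k$ blocks are distinct. The blocks are disjoint subsets of $V(\mg_0)$, and the connectors are distinct and disjoint from $V(\mg)$, so all defining vertices of the $k$ paths are distinct. Hence $\mh$ contains a $\ber kP_{2\ell'}$; truncating each path to its first $\ell$ edges yields a $\ber kP_\ell$ in $\mh$, a contradiction. Therefore $|V(\mg_0)|=k\ell'-1$.

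I expect the only delicate point to be the bookkeeping in the last paragraph --- that all $k\ell$ selected hyperedges are distinct and all $k(\ell+1)$ defining vertices are distinct --- which is precisely why I describe every hyperedge used as $\{w\}\cup E$ with $w$ its unique vertex of $X$: such a hyperedge is then determined by the pair $(w,E)$, distinct connectors force distinct hyperedges, and ``good pair'' is exactly what supplies the two distinct choices of $E$ needed at each internal connector (non-defining vertices are allowed to repeat, so nothing else must be checked). Everything else is a direct application of Lemmas~\ref{lem: good order} and~\ref{lem: the ineuqlaity in sec 4 (1)} together with the linear lower bound on $|X^{++}|$ coming from Lemmas~\ref{lem: A is bounded} and~\ref{55-1}.
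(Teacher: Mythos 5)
Your proof is correct and follows essentially the same route as the paper's: assume $|V(\mg_0)|\ge k\ell'$, take a good order of $V(\mg_0)$, and use the linearly many vertices of $X^{++}$ (each of which, via $\mg_v=\mg_0$, serves as a Berge-common neighbour of consecutive good pairs) as connectors to build $k$ disjoint Berge paths on consecutive blocks of $\ell'$ vertices, contradicting $\ber kP_\ell$-freeness. Your write-up is merely more explicit than the paper's (checking $|\mg_0|\ge2$ via Lemma~\ref{lem: the ineuqlaity in sec 4 (1)}, constructing $\ber P_{2\ell'}$ and truncating, and doing the distinctness bookkeeping for hyperedges of the form $\{w\}\cup E$), all of which is sound.
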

 \begin{proof}
     Suppose $|V(\mg_0)|\geq k\ell'$. By Lemma \ref{lem: good order}, let $V(\mg_0)=\{v_1,\dots,v_{|V(\mg_0)|}\}$ be  a good order of $V(\mg_0)$. Then for every  vertex $w\in X^{++}$, $w\in BCN(\{v_i,v_{i+1}\}$ for $i=1,\dots,|V(\mg_0)|-1$. Since $|X^{++}|\geq \frac{1}{2}\binom{|V(\mg)|}{r-1}M_{r,k,\ell}^{-1}n$, when $n$ is large enough, we can find a copy of $\ber P_{\ell}$ (denoted as $\mathcal{P}_1$) such that $DV_\mh(\mathcal{P}_1)\cap V(\mg_0)=\{v_1,\dots,v_{\ell'}\}$, and stepwise, we can find $\mathcal{P}_i$ disjoint with $\mathcal{P}_1,\dots, \mathcal{P}_{i-1}$ and $DV_\mh(\mathcal{P}_i)\cap V(\mg_0)=\{v_{(i-1)\ell'+1},\dots,v_{i\ell'}\}$. Since  $|V(\mg_0)|\geq k\ell'$, there are $k$ disjoint copies of $\ber P_{\ell}$, a contradiction.
 \end{proof}
% Then we can obtain the size of $A$.
\begin{lemma}\label{4.12}
    We have $|V(\mg)|=k\ell'-1$.
\end{lemma}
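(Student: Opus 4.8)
The plan is to establish the upper bound $|V(\mg)|\le k\ell'-1$; together with the inequality $|V(\mg)|\ge k\ell'-1$ noted just before the statement this gives the claim. So suppose for contradiction that $|V(\mg)|\ge k\ell'$. Since $V(\mg_0)\subseteq V(\mg)$ and $|V(\mg_0)|=k\ell'-1$ by Lemma \ref{lem: bound V(G0)}, there is a vertex $z\in V(\mg)\setminus V(\mg_0)$, hence a hyperedge $H\in\mg$ with $z\in H$, and we set $W=H\cap V(\mg_0)$; as $z\in H\setminus V(\mg_0)$ we have $|W|\le \ell'-1$, so $|V(\mg_0)\setminus W|\ge (k-1)\ell'$.

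The first step is a structural observation on $\mg_0$: every vertex of $V(\mg_0)$ has degree at least $2$ in $\mg_0$. Indeed, for any $v\in X^{++}$ we have $\mg_v=\mg_0$, so $|\mg_0|=d_{\mathcal{T}_{r-1}}(v)\ge \binom{k\ell'-1}{r-1}-\frac{1}{2}\binom{k\ell'-1}{r-2}$ by the definition of $X^{+}$; since $\mg_0$ is $(r-1)$-uniform on the $k\ell'-1$ vertices of $V(\mg_0)$, it omits at most $\frac{1}{2}\binom{k\ell'-1}{r-2}$ of the $\binom{k\ell'-1}{r-1}$ possible hyperedges. If some $u\in V(\mg_0)$ had $d_{\mg_0}(u)\le 1$, then at least $\binom{k\ell'-2}{r-2}-1$ hyperedges through $u$ would be omitted, and a short computation (using $k\ell'\ge 3r$, hence $k\ell'-2r+3>0$) gives $\binom{k\ell'-2}{r-2}-1>\frac{1}{2}\binom{k\ell'-1}{r-2}$, a contradiction. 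Consequently every pair $\{u,u'\}\subseteq V(\mg_0)$ is a good pair of $\mg_0$, and, exactly as in the proof of Lemma \ref{lem: bound V(G0)}, every $w\in X^{++}$ lies in $BCN(\{u,u'\})$.

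Now I would build $k$ pairwise vertex-disjoint copies of $\ber P_{\ell}$ with pairwise distinct defining hyperedges, contradicting that $\mh$ is $\ber kP_{\ell}$-free. For the first copy, apply Lemma \ref{pro: find berge P2l from mg} to $H$ to obtain a $\ber P_{2\ell'}$ (if $\ell$ is even) or a $\ber P_{2\ell'-1}$ (if $\ell$ is odd) --- that is, a $\ber P_{\ell}$ --- with $H$ as alternating defining vertices and all remaining defining vertices chosen inside $BCN(H)$; since $|BCN(H)|\ge \alpha n$ and $n$ is large these can be taken outside $V(\mg)$ and outside any prescribed bounded set, so this copy meets $V(\mg_0)$ exactly in $W$. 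For the other $k-1$ copies, split $(k-1)\ell'$ vertices of $V(\mg_0)\setminus W$ arbitrarily into $k-1$ ordered $\ell'$-tuples; since consecutive entries of any such tuple form good pairs of $\mg_0$ with all of $X^{++}$ as common Berge-neighbours, for each tuple one builds a $\ber P_{\ell}$ having that tuple as alternating defining vertices and fresh vertices of $X^{++}$ as the remaining defining vertices, exactly as in the proofs of Lemmas \ref{pro: find berge P2l from mg} and \ref{lem: bound V(G0)}. Choosing all the auxiliary vertices fresh and distinct makes the $k$ copies vertex-disjoint --- their alternating core vertices lie in the pairwise disjoint sets $H$ and the tuples inside $V(\mg_0)\setminus W$ --- and makes their defining hyperedges pairwise distinct, since each hyperedge used at a given step contains the fresh vertex introduced at that step. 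This produces a $\ber kP_{\ell}$ in $\mh$, the desired contradiction, so $|V(\mg)|\le k\ell'-1$.

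The main obstacle is the bookkeeping in the last paragraph: one must verify that the greedily chosen auxiliary vertices and hyperedges can be kept simultaneously vertex-disjoint and hyperedge-distinct across all $k$ copies. This is routine, since each pool used --- $BCN(H)$ for the first copy and $X^{++}$ for the rest --- has size linear in $n$ while only $O(rk\ell')$ vertices are ever consumed, but it has to be spelled out. The only genuinely new input beyond the techniques already used in this section is the degree observation on $\mg_0$, which is precisely what allows the (up to $\ell'-1$) vertices of $W$ to be deleted from $V(\mg_0)$ without destroying the ability to route $k-1$ disjoint Berge paths through the rest.
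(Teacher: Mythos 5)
Your proof is correct, but it takes a somewhat different route from the paper's. The paper reduces the statement to $V(\mg_0)=V(\mg)$ and, given a vertex $v\in V(\mg)\setminus V(\mg_0)$ lying in $H\in\mg$, splits into two cases according to whether $H\cap V(\mg_0)=\emptyset$: in the first case it runs the Lemma~\ref{lem: bound V(G0)} construction on $V(\mg_0)$ to get $k-1$ disjoint Berge paths and adds one more from $H$; in the second case it invokes Lemma~\ref{lem: good order} to take a good order of $V(\mg_0)$ \emph{starting at} a vertex $u\in H\cap V(\mg_0)$, builds the first path on the core $\{v,u,v_2,\dots,v_{\ell'-1}\}$ (using $BCN(H)$ to join $v$ to $u$), and then continues along the good order for the remaining $k-1$ paths. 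You avoid both the case split and the use of a good order by proving the extra structural fact that $\mg_0$ has minimum degree at least $2$ (your counting $\binom{k\ell'-2}{r-2}-1>\frac{1}{2}\binom{k\ell'-1}{r-2}$ is valid under the section's standing assumptions $k\ge 3$, $\ell'\ge r\ge 3$, since $|\mg_0|=d_{\mathcal{T}_{r-1}}(v)$ for $v\in X^{++}$ and $|V(\mg_0)|=k\ell'-1$), whence \emph{every} pair of $V(\mg_0)$ is a good pair and every $w\in X^{++}$ is a Berge-common neighbour of it; this lets you route the first path entirely through $H$ and the other $k-1$ through arbitrary $\ell'$-tuples of $V(\mg_0)\setminus W$, with $|V(\mg_0)\setminus W|\ge (k-1)\ell'$. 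What your approach buys is uniformity and robustness (no dependence on the order in which $V(\mg_0)$ is traversed, no cases); what it costs is the small density computation that the paper sidesteps via Lemma~\ref{lem: good order}. One small point to tighten: hyperedge-distinctness across steps is not implied merely by ``each hyperedge contains the fresh vertex of its step''; as in the paper's proofs of Lemmas~\ref{pro: find berge P2l from mg} and \ref{lem: bound V(G0)}, you should choose each fresh vertex outside the (bounded) union of all previously used hyperedges and defining vertices, which then makes that observation conclusive. With that standard bookkeeping spelled out, your argument is complete.
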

\begin{proof}
    It's sufficent to prove $V(\mg_0)=V(\mg)$. Suppose there exists $v\in V(\mg)\setminus V(\mg_0)$. Assume $v\in H$, where $H\in \mg$.
     Suppose $H\cap V(\mg_0)=\emptyset.$ By
      the same process as  the proof of Lemma \ref{lem: bound V(G0)}, there are $(k-1)$ disjoint copies of $\ber P_{\ell}$ in $\mh\setminus \{H\}$. By Lemma \ref{pro: find berge P2l from mg}, we can find another copy of $\ber P_{\ell}$ disjoint with the $k-1$ copies of $\ber P_{\ell}$ from $H$, a contradiction. Let $u\in H\cap V(\mg_0)$. By  Lemma \ref{lem: good order}, we can let $V(\mg_0)=\{v_1(=u),v_2,\dots,v_{k\ell'-1}\}$ be a good order of $V(\mg_0)$. Since $u,v$ has large enough Berge-common neighbours, we can find a copy of $\ber P_{\ell}$ contains $\{v,v_1(=u),\dots,v_{\ell'-1}\}$ as defining vertices. By the same argument as the proof of Lemma \ref{lem: bound V(G0)}, there are $k$ disjoint copies of $\ber P_{\ell}$ in $\mh$, a contradiction.
\end{proof}

Let $\mathcal{U}^{X^+}=\{S\in \mathcal{U}: S\cap X^+\neq \emptyset\}$ and  $\mathcal{U}[X\setminus X^+]=\{S\in
\mathcal{U}:S\subseteq X\setminus X^+\}$.
Then $\mathcal{U}=\mathcal{U}^{X^+}\cup \mathcal{U}[X\setminus X^+]$.%, and we can bound the size of $\mathcal{U}^{X^+}$ and $\mathcal{U}[X\setminus X^+]$ respectively.
\begin{lemma}\label{lem: U X+ is small}
   We have $|\mathcal{U}^{X^+}|\leq 1$. Moreover, when $\ell$ is odd, $\mathcal{U}^{X^+}=\emptyset$.
\end{lemma}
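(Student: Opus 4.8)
The plan is to contradict Lemma~\ref{pro: intersecting A with ell} by turning a would-be member $S\in\mathcal U^{X^+}$ (two of them, when $\ell$ is even) into a copy of $\ber P_\ell$ whose set of defining vertices meets $V(\mg)$ in only $\ell'-1$ vertices. So suppose for contradiction that $\mathcal U^{X^+}$ contains $c$ distinct sets $S_1,\dots,S_c$, where $c=1$ (this is the hypothesis in the odd case) or $c=2$ (the hypothesis $|\mathcal U^{X^+}|\ge 2$, which we want to exclude when $\ell$ is even). By Lemma~\ref{lem: no Berge P3} the members of $\mathcal U$ are pairwise disjoint, so the $S_i$ are pairwise disjoint subsets of $X$ of size $\ge 2$; for each $i$ fix $v_i\in S_i\cap X^+$, a vertex $x_i\in S_i\setminus\{v_i\}$, and a hyperedge $E_i\in\mh$ with $S_i\subseteq E_i$ and $E_i\setminus S_i\subseteq V(\mg)$. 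The $E_i$ are pairwise distinct, since $E_i=E_j$ with $i\ne j$ would force $S_j\subseteq E_j\setminus S_i\subseteq V(\mg)$, impossible as $S_j\subseteq X$.

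I will use two facts. (i) Since $v_i\in X^+$, $|\mathcal G_{v_i}|=d_{\mathcal T_{r-1}}(v_i)\ge\binom{k\ell'-1}{r-1}-\tfrac12\binom{k\ell'-1}{r-2}$; as $\mathcal G_{v_i}$ is a family of $(r-1)$-subsets of $V(\mg)$ and $\binom{k\ell'-2}{r-2}>\tfrac12\binom{k\ell'-1}{r-2}$ (using $k\ell'\ge 2r$), for every $w\in V(\mg)$ some member of $\mathcal G_{v_i}$ contains $w$, i.e.\ $\{v_i,w\}$ lies in a hyperedge of $\mathcal T_{r-1}$ whose unique vertex outside $V(\mg)$ is $v_i$. (ii) By Lemma~\ref{lem: good order} fix a good order $w_1,\dots,w_{k\ell'-1}$ of $\mathcal G_0$; since $\mathcal G_z=\mathcal G_0$ and $V(\mathcal G_0)=V(\mg)$ (Lemma~\ref{4.12}) for each $z$ in the linearly large set $X^{++}$ constructed above, every such $z$ is a Berge-common neighbour of each consecutive pair $\{w_j,w_{j+1}\}$. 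Now take core anchors $u_j:=w_j$ for $1\le j\le\ell'-1$ (here $\ell'\ge r\ge 3$, so $\ell'-1\ge 2$), distinct gap vertices $g_j\in X^{++}$ avoiding $\{v_1,x_1,\dots,v_c,x_c\}$ (possible since $X^{++}$ is large for $n$ large), hyperedges $A_i\in\mathcal T_{r-1}$ with $v_i,u_{\sigma(i)}\in A_i$ (with $\sigma(1)=1$, $\sigma(2)=\ell'-1$) via (i), and, via (ii), distinct hyperedges $B_j\ni\{u_j,g_j\}$ and $B_j'\ni\{g_j,u_{j+1}\}$ threading the anchor path. When $\ell=2\ell'-1$ (odd, $c=1$) read off the Berge path with defining vertices $x_1,v_1,u_1,g_1,u_2,g_2,\dots,u_{\ell'-1},g_{\ell'-1}$ and defining hyperedges $E_1,A_1,B_1,B_1',\dots,B_{\ell'-2},B_{\ell'-2}',B_{\ell'-1}$; when $\ell=2\ell'$ (even, $c=2$) instead end the sequence with $\dots,u_{\ell'-1},v_2,x_2$, the last two hyperedges being $A_2,E_2$. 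A length count confirms these are copies of $\ber P_\ell$ — and this is precisely where the parity enters: $\ell'-1$ core anchors with ordinary $X^{++}$ gaps produce a Berge path of length at most $2\ell'-2$, and each ``insertion'' of a pair $x_i,v_i$ through $E_i$ lengthens it by exactly one, so one insertion is needed when $\ell$ is odd and two when $\ell$ is even. All listed vertices are distinct ($u_j\in V(\mg)$; $g_j,v_i,x_i\in X$, chosen distinct), and all listed hyperedges are distinct (the $B_j,B_j'$ have distinct unique outside-vertices $g_j$; the $A_i$ have outside-vertex $v_i$; the $E_i$ have $\ge 2$ vertices outside $V(\mg)$ and are mutually distinct). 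Since this copy of $\ber P_\ell$ has exactly $u_1,\dots,u_{\ell'-1}$, hence only $\ell'-1<\ell'$, of its defining vertices in $V(\mg)$, it contradicts Lemma~\ref{pro: intersecting A with ell}; thus $|\mathcal U^{X^+}|\le 1$, and $\mathcal U^{X^+}=\emptyset$ when $\ell$ is odd.

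The one genuinely delicate point is the bookkeeping in the construction: getting the length count right in both parities and verifying that the $\ell+1$ chosen defining hyperedges are truly pairwise distinct. Everything else — the near-maximality of $\mathcal G_{v_i}$ and of $\mathcal G_0$, the supply of $X^{++}$-vertices, and the good order — comes straight from the earlier lemmas and the definitions of $X^+$ and $X^{++}$.
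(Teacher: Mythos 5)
Your proof is correct, and it aims at the same contradiction as the paper does — exhibiting a copy of $\ber P_\ell$ whose chosen defining vertices meet $V(\mg)$ in only $\ell'-1$ vertices, violating Lemma \ref{pro: intersecting A with ell} — but the interior of your path is built by a genuinely different mechanism. The paper anchors the sets $S_1,S_2\in\mathcal U^{X^+}$ much as you do (via $E_i$ and a $\mathcal T_{r-1}$-edge at $v_i$), but it threads the middle of the path through a single hyperedge $H\in\mg$ containing $u_1$, using $|BCN(H)|\ge\alpha n$ to supply the intermediate vertices; the step that forces the second anchor to meet $H\setminus\{u_1\}$ is a pigeonhole argument resting on Lemma \ref{lem: the ineuqlaity in sec 4 (1)} (namely $d_{\mathcal T_{r-1}}(v_2)>\binom{(k-1)\ell'}{r-1}+1$), and the odd case is handled separately through Lemma \ref{pro: find berge P2l from mg}. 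You instead thread the middle along $\ell'-1$ consecutive vertices of a good order of $V(\mg_0)$ (Lemmas \ref{lem: good order} and \ref{4.12}), using distinct vertices of $X^{++}$ as connectors, and you replace the pigeonhole step by the stronger covering observation that $d_{\mathcal T_{r-1}}(v_i)\ge\binom{k\ell'-1}{r-1}-\frac12\binom{k\ell'-1}{r-2}>\binom{k\ell'-2}{r-1}$ forces every vertex of $V(\mg)$ to lie in some member of $\mg_{v_i}$; this count is valid, since it reduces to $\binom{k\ell'-2}{r-2}>\frac12\binom{k\ell'-1}{r-2}$, which holds because $k\ell'\ge 2r$. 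What your route buys is a uniform treatment of the two parities (one attachment for odd $\ell$, two for even $\ell$) and independence from Lemmas \ref{lem: the ineuqlaity in sec 4 (1)} and \ref{pro: find berge P2l from mg}; the cost is a heavier reliance on the $X^{++}$/$\mg_0$ machinery, which the paper's own proof of this lemma uses only through $|V(\mg)|=k\ell'-1$. Your length counts in both parities and your distinctness checks (each $\mathcal T_{r-1}$-edge identified by its unique vertex in $X$, and $E_1\neq E_2$ since the $S_i$ are disjoint subsets of $X$) are all sound.
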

\begin{proof}
Suppose there are  $S_1,S_2
\in \mathcal{U}^{X^+}$, say $S_i\subseteq E_i$ for some $E_i\in \mh$ and $i=1,2$. By Lemma \ref{lem: no Berge P3}, $S_1\cap S_2=\emptyset$. Assume  $v_i\in S_i\cap X^+$ for $i=1,2$. Obviously, $E_1,E_2\notin \mathcal{T}_{r-1}$. Since $v_1\in X^{+}\subseteq X$, there exist  $E_{3}\in \mathcal{T}_{r-1}$ and  $u_1\in V(\mg)$ such that $\{u_1,v_1\}\subseteq E_3.$ Let $H\in \mg$ with $u_1\in H$. Since $|V(\mg)|=k\ell'-1$,  $|V(\mg)\setminus (H\setminus \{u_1\})|=(k-1)\ell'$. Since $v_2\in S_2\cap X^{+}$,   $d_{\mathcal{T}_{r-1}}(v_2)\geq \binom{k\ell'-1}{r-1}-\frac{1}{2}\binom{k\ell'-1}{r-2}>\binom{(k-1)\ell'}{r-1}+1$, where the last inequality holds by Lemma \ref{lem: the ineuqlaity in sec 4 (1)}. By Pigeonhole Principle, there exists  $E_{4}\in \mathcal{T}_{r-1}$ such that $v_2\in E_4$ and $E_4\cap (H\setminus\{u_1\})\not=\emptyset$. Let $u_2\in E_4\cap (H\setminus\{u_1\})$ and set $\{u_1,u_2,u_3,\ldots,u_{\ell'-1}\}\subseteq H$.
  For $i=1,2$, choose $x_{i}\in S_{i}\setminus \{v_i\}.$ Since $|BCN(H)|\ge \alpha n$, there are  $\{\omega_1,\omega_3,\omega_4,\ldots,\omega_{\ell'-1}\}$ and $\{E_{1,1},E_{1,3},E_{3,3},E_{3,4},\ldots,E_{\ell'-1,\ell'1-},E_{\ell'-1,2}\}$ such that $$\mathcal{P}:=\{x_{1},E_1,v_1,E_3,u_1,E_{1,1},\omega_1,E_{1,3},u_3,\ldots,u_{\ell'-1},E_{\ell'-1,\ell'-1},\omega_{\ell'-1},E_{\ell'-1,2},u_{2},E_4,v_2,E_2,x_{2}\}$$ forms a $\ber P_{\ell}$ and  $DV_{\mh}(\mathcal{P})\cap V(\mg)=\{u_1,u_2,\ldots,u_{\ell'-1}\}$, a contradiction with Lemma \ref{pro: intersecting A with ell}.

Suppose $\mathcal{U}^{X^+}\not=\emptyset$ when $\ell$ is odd. Let $S\in \mathcal{U}^{X^+}$, $v\in S\cap X^+$ and $S\subseteq E$, where $E\in \mh$. Since $v\in X^+$, there exist  $E'\in \mathcal{T}_{r-1}$ and  $u\in V(\mg)$ with $\{u,v\}\subseteq E'$. Obviously, $E\neq E'$. By Lemma \ref{pro: find berge P2l from mg}, there is a copy of $\ber P_{2\ell'-1}$ such that $u$ is an end vertex of the path. Together with $E'$ and $E$, there is a copy of $\ber P_{2\ell'+1}$, denoted as $\mathcal{P}$. Then there is  a copy of $\ber P_\ell$, say $\mathcal{P}'$, such that $V(\mathcal{P}') \subseteq V(\mathcal{P})$ but $|DV_\mh(\mathcal{P}')\cap V(\mg)|= \ell'-1$, a contradiction with Lemma \ref{pro: intersecting A with ell}.
\end{proof}
\par
Recall $\mathcal{U}:=\mathcal{U}_2\cup \mathcal{U}_{3+}$. By Lemma \ref{lem: no Berge P3}, the hyperedges in $\mathcal{U}$ are pairwise disjoint.
Since $\mathcal{U}[X\setminus X^+]$ is pairwise disjoint, we have $|\mathcal{U}[X\setminus X^+]|\leq \frac{|X|-|X^+|}{2}$.
By Lemma \ref{lem: U X+ is small}, $|\mathcal{U}^{X^+}|\leq 1$. If $|\mathcal{U}^{X^+}|= 1$, then we let $\mathcal{U}^{X^+}=\{S_0\}$ for some $S_0\in\mathcal{U}$ and define $\mathcal{S}_{0}=\{E\in \mathcal{T}_{t_0}|S_0\subseteq E\}$, where $t_0=r-|S_{0}|$.  Let $|X\setminus X^+|=x$. Then $|X^+|=|X|-x$.
\par
Let
\begin{equation*}
    \begin{aligned}
        &E[X^{+},V(\mg)]=\{E\in E(X,V(\mg)): E\cap X^{+}\neq \emptyset, E\cap V(\mg)\neq \emptyset\},\\
        &E[X\setminus X^{+},V(\mg)]=\{E\in E(X,V(\mg)):(E\cap X)\subseteq (X\setminus X^{+}), E\cap V(\mg)\neq \emptyset\}.
    \end{aligned}
\end{equation*}

\begin{lemma}\label{lem: U X+ upper}
   We have $$|\mh[X]|+|E[X\setminus X^{+},V(\mg)]|\leq \left(\binom{k\ell'-1}{r-1}-\frac{1}{2}\binom{k\ell'}{r-2}\right)x+\binom{k\ell'-1}{r-2}\left\lfloor\frac{x}{2}\right\rfloor.$$
\end{lemma}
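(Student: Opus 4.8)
The plan is to bound the two quantities $|\mh[X]|$ and $|E[X\setminus X^{+},V(\mg)]|$ separately and then combine them.

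First I would handle $|\mh[X]|$. Every hyperedge of $\mh[X]$ is a subset of $X$ of size $r$, and such a set belongs to $\mathcal{U}_2\cup\mathcal{U}_{3+}=\mathcal{U}$ (taking $S=E$, with the trivial requirement $E\setminus S=\emptyset\subseteq V(\mg)$). By Lemma~\ref{lem: no Berge P3} the members of $\mathcal{U}$ are pairwise disjoint; in particular the hyperedges of $\mh[X]$ are pairwise disjoint, so $|\mh[X]|\le\lfloor |X|/r\rfloor$, and moreover every such hyperedge uses $r$ vertices of $X$. Since $X^{+}$-vertices lying in $\mh[X]$-edges would force those edges to sit in $\mathcal{U}^{X^+}$, and $|\mathcal{U}^{X^+}|\le 1$ (Lemma~\ref{lem: U X+ is small}), essentially all of $|\mh[X]|$ is supported on $X\setminus X^{+}$; after discarding the at most one special edge, $|\mh[X]|\le \lfloor x/r\rfloor + O(1)$, which is far smaller than the claimed bound. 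So $\mh[X]$ contributes a negligible term and the real content is $E[X\setminus X^{+},V(\mg)]$.

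Next I would bound $|E[X\setminus X^{+},V(\mg)]|$. For a vertex $v\in X\setminus X^{+}$, split its incident hyperedges meeting $V(\mg)$ according to $|E\cap V(\mg)|$. Those with $|E\cap V(\mg)|=r-1$, i.e.\ the $\mathcal{T}_{r-1}$-edges at $v$, number at most $\binom{k\ell'-1}{r-1}-\frac12\binom{k\ell'-1}{r-2}-1$ by the very definition of $X^{+}$ (and $|V(\mg)|=k\ell'-1$ by Lemma~\ref{4.12}). Those with $|E\cap V(\mg)|\le r-2$ have $|E\cap X|\ge 2$, so $E\cap X$ gives a set $S$ in $\mathcal{U}$ containing $v$; by pairwise disjointness of $\mathcal{U}$, each $v$ lies in at most one such $S$, and by Lemma~\ref{pro: degree of 3-set is at most 2} (for $|S|\ge 3$) and the definition of $\mathcal{U}_2^-/\mathcal{U}_2^+$ the number of hyperedges through a fixed $S\subseteq X\setminus X^+$ is at most $2$ unless $S\in\mathcal{U}_2^+$, and $\mathcal{U}_2^+$-sets of size $2$ can support up to $\binom{k\ell'-1}{r-2}$ hyperedges each. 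Pairing up the $X\setminus X^+$ vertices via the disjoint sets in $\mathcal{U}$, I get a contribution of the form $\big(\binom{k\ell'-1}{r-1}-\frac12\binom{k\ell'-1}{r-2}\big)x$ from the $\mathcal{T}_{r-1}$-edges (one factor $x$, with the $-1$ absorbing the small $\mh[X]$ term and the handful of $\le r-2$ edges that are not in $\mathcal{U}_2^+$ pairs), plus $\binom{k\ell'-1}{r-2}\lfloor x/2\rfloor$ from the $\mathcal{U}_2^+$-pairs, since those pairs use two vertices of $X\setminus X^+$ each, hence at most $\lfloor x/2\rfloor$ of them. Adding the $\mh[X]$ estimate and checking the arithmetic yields the stated inequality; here one must be slightly careful to replace $\binom{k\ell'-1}{r-2}$ by $\binom{k\ell'}{r-2}$ in the first term's coefficient, which is harmless since $\binom{k\ell'}{r-2}\ge\binom{k\ell'-1}{r-2}$ only makes the bound weaker, so in fact I would prove the sharper version with $\binom{k\ell'-1}{r-2}$ and then weaken it.

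The main obstacle I anticipate is the careful bookkeeping when combining the $\mathcal{T}_{r-1}$ contribution with the low-intersection contributions: one has to make sure that each hyperedge of $E[X\setminus X^+,V(\mg)]$ is counted exactly once, that the at most one exceptional set $S_0\in\mathcal{U}^{X^+}$ does not create double counting against an $X^{+}$ vertex, and that the $\mathcal{U}_2^+$-pairs genuinely lie inside $X\setminus X^+$ so that they are charged to $x$ and not to $|X^+|$. The inequality $\binom{k\ell'-1}{r-1}-\frac12\binom{k\ell'-1}{r-2}>\binom{(k-1)\ell'}{r-1}+1$ from Lemma~\ref{lem: the ineuqlaity in sec 4 (1)}, used implicitly to guarantee $X^{+}$-vertices have many $\mathcal{T}_{r-1}$-edges hitting any prescribed $(k-1)\ell'$-subset of $V(\mg)$, is what keeps the exceptional cases under control, so I would lean on it to keep the error terms $O(1)$ and hence absorbable into the $-1$ in the coefficient of $x$.
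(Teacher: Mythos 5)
Your decomposition is essentially the paper's: bound the $\mathcal{T}_{r-1}$-edges at vertices of $X\setminus X^{+}$ by the threshold in the definition of $X^{+}$, control the edges with $|E\cap V(\mg)|\le r-2$ through the pairwise disjoint members of $\mathcal{U}$ (Lemmas \ref{lem: no Berge P3} and \ref{pro: degree of 3-set is at most 2}) together with the split $\mathcal{U}_2^{-}/\mathcal{U}_2^{+}$, count at most $\binom{k\ell'-1}{r-2}$ hyperedges per member of $\mathcal{U}_2^{+}$ and at most $\lfloor x/2\rfloor$ such members inside $X\setminus X^{+}$, and treat the at most one set of $\mathcal{U}^{X^{+}}$ separately. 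However, your final reconciliation of the binomial coefficients is backwards. The coefficient $\frac{1}{2}\binom{k\ell'}{r-2}$ enters the right-hand side with a \emph{minus} sign, so replacing $\binom{k\ell'-1}{r-2}$ by the larger $\binom{k\ell'}{r-2}$ makes the right-hand side \emph{smaller}; the stated inequality is therefore stronger, not weaker, than the version with $\binom{k\ell'-1}{r-2}$ that your argument (via the definition of $X^{+}$) actually delivers. So ``prove the sharper version with $\binom{k\ell'-1}{r-2}$ and then weaken it'' does not yield the statement: the mismatch you noticed is real, but it cannot be dismissed by monotonicity in the direction you claim.

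The second problem is your bookkeeping by ``$O(1)$ terms absorbable into the $-1$ in the coefficient of $x$.'' The lemma is an exact inequality with no additive constant, so for small $x$ (in particular $x$ bounded, or $x=0$) there is no room to absorb anything; and even for large $x$ the total slack produced by that $-1$ is exactly $x$, which in the paper's accounting is entirely consumed by $|\mathcal{T}'|\le 2|\mathcal{U}[X\setminus X^{+}]|\le x$, i.e.\ by the at most two hyperedges through each disjoint member of $\mathcal{U}[X\setminus X^{+}]$ — a count that already covers the relevant edges of $\mh[X]$, since such an edge is itself a member of $\mathcal{U}$ with itself as the unique hyperedge through it. Charging $\mh[X]$ again as a separate $\lfloor x/r\rfloor+O(1)$ term, and additionally charging the exceptional $\mathcal{S}_0$-edges and the leftover low-intersection edges, to the same $-1\cdot x$ double-spends this slack. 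The paper avoids this by an explicit case analysis on whether $\mathcal{U}^{X^{+}}$ is empty and whether $t_0=r-2$: it removes $\mathcal{S}_0$ from the appropriate class, bounds the remainder by $2|\mathcal{U}[X\setminus X^{+}]|\le x$, and uses $|\mathcal{T}_{r-2}^{+}|\le\binom{k\ell'-1}{r-2}|\mathcal{U}_2^{+}|$ with $|\mathcal{U}_2^{+}|\le\lfloor x/2\rfloor$ (valid in those cases because the members of $\mathcal{U}_2^{+}$ then lie in $X\setminus X^{+}$). You need this exact bookkeeping, not an asymptotic absorption argument, for the inequality as stated.
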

\begin{proof} By Lemma \ref{lem: U X+ is small},
$|\mathcal{U}^{X+}|\leq 1$. Note that $\left|\mathcal{T}_{r-2}^+\right|\leq \binom{|V(\mg)|}{r-2}\left|\mathcal{U}_2^+\right|=\binom{k\ell'-1}{r-2}\left|\mathcal{U}_2^+\right|$.
We consider two cases.

\textbf{Case 1:} $\mathcal{U}^{X+}=\emptyset$ or $|\mathcal{U}^{X+}|= 1$ but $t_0\neq r-2$.

In this case, $\mathcal{U}_2^+\subseteq X\setminus X^{+}$. Then $|\mathcal{U}_{2}^{+}|\leq \left\lfloor\frac{x}{2}\right\rfloor$ by the hyperedges in $\mathcal{U}$ being pairwise disjoint.
When $t_0\neq r-2$, let $\mathcal{T}_{t_0}^\star=\mathcal{T}_{t_0}\setminus\mathcal{S}_0$. Set $\mathcal{T}'=(\cup_{t\in[0,r-3]\setminus\{t_0\}}\mathcal{T}_t)\cup \mathcal{T}_{t_0}^*\cup\mathcal{T}_{r-2}^-$ if $t_0\neq r-2$, and $\mathcal{T}'=(\cup_{t\in[0,r-3]}\mathcal{T}_t)\cup \mathcal{T}_{r-2}^{-}$ if $\mathcal{U}^{X+}=\emptyset$. By Lemma \ref{pro: degree of 3-set is at most 2},  $|\mathcal{T}'|\leq 2|\mathcal{U}[X\setminus X^+]|\leq|X|-|X^+|=x.$ So
\begin{equation*}
    \begin{aligned}
        &|\mh[X]|+|E[X\setminus X^+,V(\mg)]|\\
        &\leq \left|\mathcal{T}'\right|+\left|\mathcal{T}_{r-2}^+\right|+\left(\binom{k\ell'-1}{r-1}-\frac{1}{2}\binom{k\ell'}{r-2}-1\right)x\\
        &\leq  \left(\binom{k\ell'-1}{r-1}-\frac{1}{2}\binom{k\ell'}{r-2}\right)x+\binom{k\ell'-1}{r-2}\left|\mathcal{U}_2^+\right|\\
        &\leq  \left(\binom{k\ell'-1}{r-1}-\frac{1}{2}\binom{k\ell'}{r-2}\right)x+\binom{k\ell'-1}{r-2}\left\lfloor\frac{x}{2}\right\rfloor.
    \end{aligned}
\end{equation*}
\par\noindent
\textbf{Case 2:}
 $|\mathcal{U}^{X+}|= 1$ and $t_0=r-2$.
\par
 Let $\mathcal{T}_{r-2}^{\star-}=\mathcal{T}_{r-2}^-\setminus\mathcal{S}_0$, $\mathcal{T}_{r-2}^{\star+}=\mathcal{T}_{r-2}^+\setminus\mathcal{S}_0$ and $\mathcal{U}_2^{\star+}=\{U\in \mathcal{U}_{2}^{+}: U\subseteq X\setminus X^{+}\}$. Then $|\mathcal{U}_2^{\star +}|\leq \left\lfloor\frac{x}{2}\right\rfloor$.  By Lemma \ref{pro: degree of 3-set is at most 2}, we have
\begin{equation*}\label{eq: when t_0=r-2}
    \left|\left(\bigcup_{t\in[0,r-3]}\mathcal{T}_t\right)\cup \mathcal{T}_{r-2}^{\star-}\right|\leq 2|\mathcal{U}[X\setminus X^+]|\leq|X|-|X^+|=x.
\end{equation*}So we have
\begin{equation*}
    \begin{aligned}
        &|\mh[X]|+|E[X\setminus X^+,V(\mg)]|\\
        &\leq \left|\left(\bigcup_{t\in[0,r-3]}\mathcal{T}_t\right)\bigcup\mathcal{T}_{r-2}^{\star-}\right|+\left|\mathcal{T}_{r-2}^{\star+}\right|+\left(\binom{k\ell'-1}{r-1}-\frac{1}{2}\binom{k\ell'}{r-2}-1\right)x\\
        &\leq  \left(\binom{k\ell'-1}{r-1}-\frac{1}{2}\binom{k\ell'}{r-2}\right)x+\binom{k\ell'-1}{r-2}|\mathcal{U}_2^{\star+}|\\
        &\leq\left(\binom{k\ell'-1}{r-1}-\frac{1}{2}\binom{k\ell'}{r-2}\right)x+\binom{k\ell'-1}{r-2}\left\lfloor\frac{x}{2}\right\rfloor.
    \end{aligned}
\end{equation*}
\par\noindent
\end{proof}

\begin{lemma}\label{X+V(mg)}
We have
$$    |E[X^{+},V(\mg)]|\leq\left\{
\begin{array}{ll}
\binom{k\ell'-1}{r-1}(|X|-x) & \mbox{ if ~$\mathcal{U}^{X^+}=\emptyset$,}\\
\binom{k\ell'-1}{r-1}(|X|-x)+2 & \mbox{ if ~$\mathcal{U}^{X^+}\not=\emptyset$ and $t_0 \neq r-2$,}\\
\binom{k\ell'-1}{r-1}(|X|-x)+\binom{k\ell'-1}{r-2} & \mbox{ if ~$\mathcal{U}^{X^+}\not=\emptyset$ and $t_0= r-2$.}
\end{array}
\right.$$
\end{lemma}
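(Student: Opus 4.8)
The plan is to split $E[X^+,V(\mg)]$ according to $|E\cap X|$. Every $E\in E[X^+,V(\mg)]$ satisfies $E\subseteq X\cup V(\mg)$ with $E\cap X^+\neq\emptyset$ and $E\cap V(\mg)\neq\emptyset$, so $1\le |E\cap X|\le r-1$. If $|E\cap X|=1$ then $|E\cap V(\mg)|=r-1$, so $E\in\mathcal{T}_{r-1}$ and its unique vertex of $X$ lies in $X^+$. If $|E\cap X|\ge 2$ then, writing $S=E\cap X$, we have $|S|\ge 2$, $S\subseteq E\in\mh[V(\mg)\cup X]$ and $E\setminus S=E\cap V(\mg)\subseteq V(\mg)$, so $S\in\mathcal{U}$; since $S\cap X^+\supseteq E\cap X^+\neq\emptyset$ we get $S\in\mathcal{U}^{X^+}$.

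For the edges with $|E\cap X|=1$, grouping by the unique $X$-vertex shows their number equals $\sum_{v\in X^+}d_{\mathcal{T}_{r-1}}(v)$. For each $v\in X^+$ the map $E\mapsto E\setminus\{v\}$ injects $\{E\in\mathcal{T}_{r-1}:v\in E\}$ into the $(r-1)$-subsets of $V(\mg)$, and $|V(\mg)|=k\ell'-1$ by Lemma~\ref{4.12}, so $d_{\mathcal{T}_{r-1}}(v)\le\binom{k\ell'-1}{r-1}$. Hence these edges number at most $\binom{k\ell'-1}{r-1}|X^+|=\binom{k\ell'-1}{r-1}(|X|-x)$. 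For the edges with $|E\cap X|\ge 2$ I would use Lemma~\ref{lem: U X+ is small}, which gives $|\mathcal{U}^{X^+}|\le 1$ (and $\mathcal{U}^{X^+}=\emptyset$ when $\ell$ is odd). If $\mathcal{U}^{X^+}=\emptyset$ there are no such edges. If $\mathcal{U}^{X^+}=\{S_0\}$ with $|S_0|=r-t_0\ge 3$ (i.e.\ $t_0\neq r-2$), every such edge has $E\cap X=S_0\in\mathcal{U}_{3+}$ and hence lies in $\mathcal{T}_{t_0}$, so by Lemma~\ref{pro: degree of 3-set is at most 2} there are at most $2$ of them. If $\mathcal{U}^{X^+}=\{S_0\}$ with $|S_0|=2$ (i.e.\ $t_0=r-2$), every such edge has $E\cap X=S_0\in\mathcal{U}_2$ and lies in $\mathcal{T}_{r-2}$; these edges are determined by their distinct $(r-2)$-subsets of $V(\mg)$, so there are at most $\binom{|V(\mg)|}{r-2}=\binom{k\ell'-1}{r-2}$ of them (and at most $2\le\binom{k\ell'-1}{r-2}$ when $S_0\in\mathcal{U}_2^-$). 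Summing the two contributions in each of the three cases gives exactly the stated bounds.

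The argument is pure bookkeeping built on the structural lemmas already proved: Lemma~\ref{4.12} pins $|V(\mg)|=k\ell'-1$, Lemma~\ref{lem: U X+ is small} controls $\mathcal{U}^{X^+}$, and Lemma~\ref{pro: degree of 3-set is at most 2} bounds the multiplicities of the large $X$-traces. The only points needing a moment's care are verifying that splitting by $|E\cap X|$ genuinely partitions $E[X^+,V(\mg)]$ (there is no double counting between the $\mathcal{T}_{r-1}$ count and the $\mathcal{U}^{X^+}$ count, since those two families of hyperedges have different values of $|E\cap X|$) and the trivial inequality $\binom{k\ell'-1}{r-2}\ge 2$ used in Case~3, which is immediate from $k\ell'-1\ge 8$ and $r\ge 3$. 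I do not anticipate any genuine obstacle.
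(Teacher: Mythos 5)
Your proof is correct and follows essentially the same route as the paper: bound the hyperedges meeting $X^+$ in exactly one vertex by $\binom{k\ell'-1}{r-1}|X^+|$ using Lemma \ref{4.12}, and bound the hyperedges whose $X$-trace has size at least two via Lemma \ref{lem: U X+ is small} together with Lemma \ref{pro: degree of 3-set is at most 2} (giving $+2$ when $t_0\neq r-2$) or the trivial count of $(r-2)$-subsets of $V(\mg)$ (giving $+\binom{k\ell'-1}{r-2}$ when $t_0=r-2$). Your write-up merely makes explicit the partition by $|E\cap X|$ that the paper leaves implicit.
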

\begin{proof}
If $\mathcal{U}^{X^+}=\emptyset$, by Lemma \ref{4.12}, we have $$|E[X^+,V(\mg)]|\leq \binom{k\ell'-1}{r-1}(|X|-x). $$
Now we will finish the proof by considering the following two cases.

\noindent\textbf{Case 1:} $\mathcal{U}^{X^+}\not=\emptyset$ and $t_0\neq r-2$.

By Lemma \ref{pro: degree of 3-set is at most 2}, we have $|\mathcal{S}_0|\leq 2$. Thus
$$|E[X^{+},V(\mg)]|\leq \binom{k\ell'-1}{r-1}(|X|-x)+2.$$
\par\noindent
\textbf{Case 2:} $\mathcal{U}^{X^+}\not=\emptyset$ and
$t_0= r-2$.

In this case, we have $|\mathcal{S}_0|\leq \binom{k\ell'-1}{r-2}$. Thus
$$|E[X^{+},V(\mg)]|\leq \binom{k\ell'-1}{r-1}(|X|-x)+\binom{k\ell'-1}{r-2}.$$
\end{proof}

\vskip.2cm
Now we are going to prove our main result.

\noindent\emph{Proof of Theorem \ref{thm: berge linear forest}} Fellow the notions and proofs of the lemmas before,
we have  $|X|+|Y|+|V(\mg)|=n$. We first have the following claim.
\begin{claim}\label{claim: finail-claim}
    If $\ell$ is even, then $\mathcal{U}^{X^+}\not=\emptyset$ and $t_0= r-2$.
\end{claim}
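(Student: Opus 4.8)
The plan is to prove Claim~\ref{claim: finail-claim} by contradiction: suppose $\ell$ is even but its conclusion fails. By Lemma~\ref{lem: U X+ is small} we have $|\mathcal{U}^{X^+}|\le 1$, so the failure means $\mathcal{U}^{X^+}=\emptyset$ or $\mathcal{U}^{X^+}=\{S_0\}$ with $t_0=r-|S_0|\ne r-2$. In either case only the first two alternatives of Lemma~\ref{X+V(mg)} are available, so, writing $x:=|X\setminus X^+|$ (hence $|X|-x=|X^+|$),
$$|E[X^+,V(\mg)]|\le \binom{k\ell'-1}{r-1}(|X|-x)+2 .$$

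Next I would use the partition of $\mh$ obtained by sorting each hyperedge according to which of the three parts $V(\mg),X,Y$ it meets, namely
\begin{align*}
\mh=\mh[V(\mg)]&\sqcup \mh[X]\sqcup \mh[Y]\sqcup E[X^+,V(\mg)]\\
&\sqcup E[X\setminus X^+,V(\mg)]\sqcup E[V(\mg),Y]\sqcup E(X,Y),
\end{align*}
where a hyperedge touching both $V(\mg)$ and $Y$ is put in $E[V(\mg),Y]$, one touching $X$ and $Y$ but not $V(\mg)$ in $E(X,Y)$, and one touching only $V(\mg)$ and $X$ in $E[X^+,V(\mg)]$ or $E[X\setminus X^+,V(\mg)]$ according to whether it meets $X^+$. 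I then bound the seven terms using earlier results: $|\mh[V(\mg)]|\le\binom{k\ell'-1}{r}$ since $|V(\mg)|=k\ell'-1$ by Lemma~\ref{4.12}; $|\mh[X]|+|E[X\setminus X^+,V(\mg)]|\le\binom{k\ell'-1}{r-1}x$ by Lemma~\ref{lem: U X+ upper}, using $\lfloor x/2\rfloor\le x/2$ and $\binom{k\ell'}{r-2}\ge\binom{k\ell'-1}{r-2}$ to see that $-\tfrac12\binom{k\ell'}{r-2}x+\binom{k\ell'-1}{r-2}\lfloor x/2\rfloor\le 0$; and $|\mh[Y]|+|E(X,Y)|+|E[V(\mg),Y]|\le(\tfrac1\ell\binom{\ell}{r}+2)|Y|$ by (\ref{eq4-1}), Lemma~\ref{lem:E(X,Y)} and (\ref{eq: E(A,Y)}).

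Adding these bounds and using $|X^+|+x=|X|$ gives $|\mh|\le\binom{k\ell'-1}{r-1}|X|+\binom{k\ell'-1}{r}+(\tfrac1\ell\binom{\ell}{r}+2)|Y|+2$; then $\tfrac1\ell\binom{\ell}{r}+2<\binom{k\ell'-1}{r-1}$ (Lemma~\ref{last}) and $|X|+|Y|=n-|V(\mg)|=n-k\ell'+1$ (Lemma~\ref{4.12}) yield $|\mh|\le\binom{k\ell'-1}{r-1}(n-k\ell'+1)+\binom{k\ell'-1}{r}+2$. Since $k\ge 3$ and $\ell'\ge r\ge 3$ force $k\ell'-1\ge 8$ and $r-2\ge 1$, we have $\binom{k\ell'-1}{r-2}\ge 8>2$, so $|\mh|<\binom{k\ell'-1}{r-1}(n-k\ell'+1)+\binom{k\ell'-1}{r}+\binom{k\ell'-1}{r-2}$, contradicting (\ref{3-2}) for even $\ell$; this proves the claim. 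The argument is essentially bookkeeping and I do not expect a genuine obstacle. The only delicate points are checking that the displayed decomposition of $\mh$ really is a disjoint union covering every hyperedge, and noticing that in both ``bad'' cases the additive surplus coming from Lemma~\ref{X+V(mg)} is only the constant $2$ — small enough to be beaten by the $\binom{k\ell'-1}{r-2}$ that the even case forces into the lower bound.
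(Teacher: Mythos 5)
Your proposal is correct and follows essentially the same route as the paper: argue by contradiction, note that the failure of the claim restricts Lemma \ref{X+V(mg)} to the two cases with surplus at most $2$, sum the same seven-part edge decomposition using Lemmas \ref{lem: U X+ upper}, \ref{4.12}, \ref{lem:E(X,Y)}, (\ref{eq4-1}) and (\ref{eq: E(A,Y)}), and beat the resulting bound against the even-$\ell$ lower bound (\ref{3-2}) since $2<\binom{k\ell'-1}{r-2}$. The only differences are cosmetic (you absorb the $\binom{k\ell'-1}{r-2}\lfloor x/2\rfloor-\tfrac12\binom{k\ell'}{r-2}x\le 0$ simplification up front and spell out the disjointness of the decomposition), so no changes are needed.
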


    \noindent{\bf Proof of Claim \ref{claim: finail-claim}}
Suppose $\mathcal{U}^{X^+}=\emptyset$ when $\ell$ is even or
$t_0\not= r-2$ when $\mathcal{U}^{X^+}\not=\emptyset$. By Lemma \ref{X+V(mg)}, $|E[X^{+},V(\mg)]|\leq \binom{k\ell'-1}{r-1}(|X|-x)+2$.
 By (\ref{eq4-1}), (\ref{eq: number of edges 1}), (\ref{eq: E(A,Y)}), Lemmas \ref{lem: U X+ upper} and \ref{X+V(mg)}, we have
\begin{equation*}
    \begin{aligned}
        |\mh|\leq& |E[X^+,V(\mg)]|+|\mh[X]|+|E[X\setminus X^+,V(\mg)]|+|E(X,Y)|+|\mh[Y]|+|\mg|+|E[V(\mg),Y]|\\
        \leq &\binom{k\ell'-1}{r-1}(|X|-x)+2+\left(\binom{k\ell'-1}{r-1}-\frac{1}{2}\binom{k\ell'}{r-2}\right)x+\binom{k\ell'-1}{r-2}\left\lfloor\frac{x}{2}\right\rfloor\\
        +& \left(\frac{1}{2\ell'}\binom{2\ell'}{r}+2 \right)|Y|+\binom{k\ell'-1}{r}\\
        = & \binom{k\ell'-1}{r-1}|X|+\binom{k\ell'-1}{r}+2+\left(\frac{1}{2\ell'}\binom{2\ell'}{r}+2 \right)|Y|
        + \binom{k\ell'-1}{r-2}\left\lfloor\frac{x}{2}\right\rfloor-\frac{1}{2}\binom{k\ell'}{r-2}x\\
        < & \binom{k\ell'-1}{r-1}(n-k\ell'+1)+\binom{k\ell'-1}{r}+\binom{k\ell'-1}{r-2},
    \end{aligned}
\end{equation*}a contradiction with (\ref{3-2}).\q

By Claim \ref{claim: finail-claim}, (\ref{eq: number of edges 1}), (\ref{eq: E(A,Y)}), Lemmas \ref{lem: U X+ upper} and \ref{X+V(mg)}, we have
\begin{equation*}
    \begin{aligned}
        |\mh|\leq& |E[X^+,V(\mg)]|+|\mh[X]|+|E[X\setminus X^+,V(\mg)]|+|E(X,Y)|+|\mh[Y]|+|\mg|+|E[V(\mg),Y]|\\
        \leq &\binom{k\ell'-1}{r-1}(|X|-x)+\mathbb{I}_\ell\cdot\binom{k\ell'-1}{r-2}+\left(\binom{k\ell'-1}{r-1}-\frac{1}{2}\binom{k\ell'}{r-2}\right)x+\binom{k\ell'-1}{r-2}\lfloor\frac{x}{2}\rfloor\\
        +& \left(\frac{1}{2\ell'}\binom{2\ell'}{r}+2 \right)|Y|+\binom{k\ell'-1}{r}\\
        = & \binom{k\ell'-1}{r-1}|X|+\binom{k\ell'-1}{r}+\mathbb{I}_\ell\cdot\binom{k\ell'-1}{r-2}+\left(\frac{1}{2\ell'}\binom{2\ell'}{r}+2 \right)|Y|\\
        +& \binom{k\ell'-1}{r-2}\left\lfloor\frac{x}{2}\right\rfloor-\frac{1}{2}\binom{k\ell'}{r-2}x\\
        \leq & \binom{k\ell'-1}{r-1}(n-k\ell'+1)+\binom{k\ell'-1}{r}+\mathbb{I}_\ell\cdot\binom{k\ell'-1}{r-2}
    \end{aligned}
\end{equation*}
 where $\mathbb{I}_\ell=1$ if $\ell$ is even, and $\mathbb{I}_\ell=0$ otherwise.
Thus we finished the proof. \QEDopen

\section{Concluding remarks}
In Theorem \ref{thm: berge linear forest},
the range of $r$ is restricted by $\ell$ or $\lf \frac{\ell+1}{2}\rf$. But we believe that the results still hold in a wider range.
\begin{conjecture}
    For integers $\ell,r$ and $k\geq 2$, there exists $N_{\ell,r,k}$ such that for $n>N_{\ell,r,k}$, and $2\leq r\leq k\lf\frac{\ell+1}{2}\rf$,
    $$\mathrm{ex}_r(n,\ber kP_\ell)=\binom{k\ell'-1}{r-1}(n-k\ell'-1+1)+\binom{k\ell'-1}{r}+\mathbb{I}_r\binom{k\ell'-1}{r-2},$$
    where $\ell'=\lf\frac{\ell+1}{2}\rf.$
\end{conjecture}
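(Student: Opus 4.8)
The plan is to prove the stated formula by an explicit extremal construction (lower bound) and by induction on $k+r$ (upper bound). For the lower bound I would use the hypergraphs $\mh_{\mathbb{I}_\ell}(n,k,\ell)$ on $A\cup B$ with $|A|=k\ell'-1$: all $r$-sets meeting $A$, plus (only when $\ell$ is even) all $r$-sets whose trace on $B$ is one fixed pair $\{u,v\}$. To check $\ber kP_\ell$-freeness, observe that deleting $A$ (and, when $\ell$ is even, also $u$) leaves a hypergraph with no Berge cycle in which every Berge path is short, while a $\ber kP_\ell$ would need $k\ell'$ defining vertices outside this deletable set of size $k\ell'-1+\mathbb{I}_\ell$ — impossible. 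This is exactly where the parity correction $\mathbb{I}_\ell\binom{k\ell'-1}{r-2}$ comes from, and counting hyperedges gives the claimed lower bound.

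For the upper bound, take an extremal $\mh$ and induct on $k+r$, with base cases $r=2$ (Theorem \ref{thm:kPl}) and $k=2$. The case $k=2$ splits by parity: for $\ell$ odd one quotes Theorem \ref{thm: two berge path}(ii); for $\ell=2\ell'$ even, first delete all vertices of degree below $\binom{2\ell'-1}{r-1}$ to get $\mh'$, and show using Theorems \ref{thm: turan of BPl} and \ref{thm: connect Pl} that a longest Berge path of $\mh'$ has length $\ell_0\in\{4\ell'-1,4\ell'\}$. If $\ell_0=4\ell'-1$, the connected Turán bound $\exc(|V(\mh_{1}')|,\ber P_{4\ell'})$ closes the case. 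If $\ell_0=4\ell'$, first upgrade the long path to a Berge cycle $\ber C_{4\ell'}$ inside its component $\mh_{1}'$ (otherwise connectivity would lengthen the path); then every hyperedge of $\mh_{1}'$ meets the cycle's defining set $U_C$, and a pigeonhole argument on $\ber S_{2\ell'-1}$-neighbourhoods inside $U_C$ — two heavy attached vertices with the same such neighbourhood produce a $\ber 2P_{2\ell'}$, which is where $2\ell'\ge r+7$ is used — bounds the number of heavy attached vertices by a constant, forcing $|\mh_{1}'|\le\binom{2\ell'-2}{r-1}|V(\mh_{1}')|+O(1)$ and a contradiction.

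For the inductive step $k\ge3$, $r\ge3$, the heart is to extract a bounded ``core''. First I would show every copy of $\ber P_\ell$ in $\mh$ has $\ell'$ of its defining vertices with $\Omega(n)$ common Berge-neighbours: partitioning $E(\mh)$ by intersection size $t$ with the $\ell+1$ defining vertices, each truncated link is an $(r-t)$-uniform $\ber(k-1)P_\ell$-free hypergraph, hence small by induction, which forces the layer of edges meeting the defining set in $r-1$ vertices to be essentially all of $\mh$ and in turn forces linearly many vertices to have large link inside that set. Collecting such $\ell'$-sets into an auxiliary $\ell'$-graph $\mg$, one shows $\mg$ has no $k$ pairwise disjoint hyperedges (each edge of $\mg$ extends to a $\ber P_{2\ell'}$, and $\mh-v$ still contains $\ber(k-1)P_\ell$ by induction), and then that $|V(\mg)|$ is bounded by a constant $C(k,r,\ell)$: a high-degree vertex of $\mg$ yields a large Berge star whose leaves are iteratively merged through shared Berge-common neighbours, roughly doubling path lengths over $O(\log\ell)$ stages until a $\ber kP_\ell$ appears. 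Finally, partition $V(\mh)=V(\mg)\cup X\cup Y$ by how many edges a vertex sends into $V(\mg)$; show $\mh[Y]$ is $\ber P_\ell$-free (bounded via Theorem \ref{thm: turan of BPl}), the sets of $X\cup Y$ completed into an edge by $V(\mg)$ are essentially pairwise disjoint, $|V(\mg)|=k\ell'-1$ exactly (a larger core with a good order from Lemma \ref{lem: good order} would give $k$ disjoint paths), and a careful count of the layers by intersection with $V(\mg)$ matches the target, the extra $\binom{k\ell'-1}{r-2}$ surfacing precisely for even $\ell$.

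I expect the main obstacle to be the boundedness of $|V(\mg)|$. The doubling construction must be executed so that at each stage enough untouched Berge-common neighbours remain — a recursion of the form $k_i=\frac{2}{\alpha}+2(k_{i+1}-1)+1$ is what makes the count go through — while keeping the paths built at different stages vertex- and edge-disjoint, which demands careful bookkeeping of what has already been committed. A secondary difficulty is the closing edge count: one must verify that the exceptional families are all of lower order except for a $\lf x/2\rf$-type term absorbed against $\frac12\binom{k\ell'}{r-2}x$, so that the numerical inequalities of Section 2 (in particular Lemmas \ref{lem: the ineuqlaity in sec 4 (1)}, \ref{lem: the ineuqlaity in sec 3 (2)}, \ref{lem: the ineuqlaity in sec 3 (3)} and \ref{last}) suffice to meet the lower bound exactly.
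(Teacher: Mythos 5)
This statement is a \emph{conjecture} in the paper (its Section 5), not a proved theorem: the authors explicitly pose it because their proof only works under the restrictions $\ell'\ge r$ and $2\ell'\ge r+7$, whereas the conjecture allows the much wider range $2\le r\le k\ell'$. Your proposal is essentially a faithful re-sketch of the paper's proof of Theorem \ref{thm: berge linear forest} (same lower-bound construction, the $k=2$ analysis via longest Berge paths and a $\ber C_{4\ell'}$, the induction on $k+r$, the Berge-common-neighbour lemma, the auxiliary $\ell'$-graph $\mg$ with $|V(\mg)|=k\ell'-1$, and the final layered count). That argument does not establish the conjecture, because every external ingredient and every numerical lemma it leans on requires $r$ to be small compared to $\ell$ or $\ell'$: Theorem \ref{thm: turan of BPl}(1) needs $\ell\ge r+1$ (for $r\ge\ell$ the Berge-path Tur\'an number has a different form, part (2)); Theorem \ref{thm: connect Pl} needs $\ell\ge 2r+13$; Theorem \ref{thm: two berge path} needs $\ell\ge 2r+11$ or $\ell_2\ge r+6$; the end-vertex degree argument (Lemma \ref{claim: inter degree of ends vertices}) uses $2\ell'\ge r+6$; and Lemmas \ref{lemma: first}, \ref{lem: the ineuqlaity in sec 4 (1)}, \ref{lem: the ineuqlaity in sec 3 (2)}, \ref{lem: the ineuqlaity in sec 3 (3)}, \ref{last} all assume $\ell'\ge r$ or $\ell\ge r$ and can fail otherwise. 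In the conjectured range one may have, say, $k=3$, $\ell'=3$, $r=9$, where $r$ exceeds $\ell+1$; there the induction cannot even get started (the $(r-t)$-uniform links and the $k=2$ base case are no longer covered by the quoted theorems), the counting lemmas reverse sign, and even the claim that the Berge-common-neighbour layer $\mathcal{S}_{r-1}$ dominates breaks down. So the proposal, as written, proves at best what the paper already proves and leaves the actual content of the conjecture --- removing the hypotheses $\ell'\ge r$ and $2\ell'\ge r+7$ --- untouched; a genuinely new idea is needed for large $r$ (for instance, a different treatment of Berge paths when $r\ge\ell$, in the spirit of Theorem \ref{thm: turan of BPl}(2)).

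A small additional caution: the only part of your sketch that does extend to the full conjectured range is the lower-bound construction; note also that in the conjecture's displayed formula the indicator should be $\mathbb{I}_\ell$ (parity of $\ell$), not $\mathbb{I}_r$, matching the construction $\mh_{\mathbb{I}_\ell}(n,k,\ell)$.
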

The lower bound is similar to the construction in Section 1. Moreover, with a similar construction, we can further study the Tur\'an number of a linear forest.
\begin{conjecture}
    Let $\mathcal{P}=P_{\ell_1}\cup\dots\cup P_{\ell_k}$ be a linear forest where $k\geq 2$ and $\ell_k\geq \dots,\geq \ell_1\geq 3$. Let $\ell_i=\lf\frac{\ell_i+1}{2}\rf$. When $2\leq r\leq \sum_{i=1}^k\ell_i'$,
    $$\mathrm{ex}_r(n,\ber\mathcal{P})=\max\{\mathrm{ex}_r(n,\ber P_{\ell_k}),f(n,\ell_i,r)\},$$
    where
    $$f(n,\ell_i,r)=\binom{\sum_{i=1}^k\ell_i'-1}{r-1}(n-\sum_{i=1}^k\ell_i'+1)+\binom{\sum_{i=1}^k\ell_i'-1}{r}+\mathbb{I}_{\Pi\ell_i}\cdot\binom{\sum_{i=1}^k\ell_i-1}{r-2},$$
    and $\mathbb{I}_{\Pi\ell_i}=0$ if $\Pi_{i=1}^k\ell_i$ is odd, and  $\mathbb{I}_{\Pi\ell_i}=1$ if $\Pi_{i=1}^k\ell_i$ is even.
\end{conjecture}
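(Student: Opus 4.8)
The plan is to establish the lower bound by two explicit constructions and the upper bound by induction on $k+r$, importing the machinery of Sections 3 and 4.

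For the lower bound, the first term $\mathrm{ex}_r(n,\ber P_{\ell_k})$ is witnessed by any extremal Berge-$P_{\ell_k}$-free $r$-graph (which exists by Theorems \ref{thm: turan of BPl} and \ref{thm: connect Pl}), since a Berge-$\mathcal{P}$ contains a Berge-$P_{\ell_k}$. The second term is witnessed by the clique-plus-stars hypergraph on a partition $A\cup B$ with $|A|=\sum_{i=1}^k\ell_i'-1$: take all $r$-sets inside $A$, all $r$-sets meeting $B$ in exactly one vertex, and, when $\mathbb{I}_{\Pi\ell_i}=1$, all $r$-sets meeting $B$ in a fixed pair $\{u,v\}$, exactly as in Section 1. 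Berge-$\mathcal{P}$-freeness is checked via the observation that any Berge-$P_{\ell_i}$ placed in this hypergraph spends at least $\ell_i'$ of its defining vertices inside $A$, the edges through $\{u,v\}$ being able to save at most one such vertex over the whole forest; a short parity bookkeeping then shows the total demand on $A$ exceeds $|A|$, so no Berge-$\mathcal{P}$ fits.

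For the upper bound, let $\mathcal{H}$ be an extremal Berge-$\mathcal{P}$-free $r$-graph; the base case $k=1$ is Theorems \ref{thm: turan of BPl} and \ref{thm: connect Pl}, and we may assume $r\ge 3$ and $\ell_1\le\dots\le\ell_k$. If $\mathcal{H}$ contains no Berge-$P_{\ell_k}$ then $|\mathcal{H}|\le\mathrm{ex}_r(n,\ber P_{\ell_k})$ and we are done, so assume it does. Deleting the defining vertices of one Berge-$P_{\ell_k}$ leaves a Berge-$(P_{\ell_1}\cup\dots\cup P_{\ell_{k-1}})$-free hypergraph, to which the induction hypothesis applies; running the $\mathcal{S}_t$-decomposition of Lemma \ref{lem: enough Berge common} with $\ell_k'$ in the role of $\ell'$ then produces a constant $\alpha>0$ such that every Berge-$P_{\ell_k}$ in $\mathcal{H}$ has a set of $\ell_k'$ defining vertices with at least $\alpha n$ Berge-common neighbours. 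Let $\mathcal{G}$ be the $\ell_k'$-uniform hypergraph of all such $\ell_k'$-sets. As in Lemmas \ref{pro: find berge P2l from mg}--\ref{lem: A is bounded}, a good order of a component $\mathcal{G}_0$ of $\mathcal{G}$ lets us thread, through any block of $\ell_i'$ consecutive vertices, a Berge-$P_{\ell_i}$ using Berge-common neighbours as internal vertices; since blocks of sizes $\ell_1',\dots,\ell_k'$ can be laid out consecutively along a good order, $|V(\mathcal{G}_0)|\ge\sum\ell_i'$ would create a Berge-$\mathcal{P}$, forcing $|V(\mathcal{G}_0)|\le\sum\ell_i'-1$; then $|V(\mathcal{G})|\le C(k,r,\ell_1,\dots,\ell_k)$ by adapting the "bounded maximum degree implies bounded core" reduction together with the iterated-doubling argument of Lemma \ref{lem: A is bounded}, now targeting a Berge-$\mathcal{P}$ instead of a Berge-$kP_\ell$.

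With the core $V(\mathcal{G})$ bounded, I would finish as at the end of Section 4: split $V(\mathcal{H})\setminus V(\mathcal{G})$ into $X$ (at least two edges meeting $V(\mathcal{G})$) and $Y$ (at most one); bound $\mathcal{H}[Y]$ via Theorem \ref{thm: turan of BPl} since it is Berge-$P_{\ell_k}$-free; isolate the family $\mathcal{T}_{r-1}$ of edges with $r-1$ vertices in the core and the set $X^+$ of vertices with many such edges; show $|X^+|\ge cn$, deduce $|V(\mathcal{G})|=\sum\ell_i'-1$ exactly, and sum the contributions of $\mathcal{H}[Y]$, the core, the $X$--core and $X$--$Y$ edges to reach $f(n,\ell_i,r)$, invoking the comparison with $\mathrm{ex}_r(n,\ber P_{\ell_k})$ in any case where the $Y$-part threatens to dominate. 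The main obstacle, and the reason this remains a conjecture, is the bookkeeping of distinct lengths: the auxiliary-hypergraph and boundedness lemmas must be re-engineered so that the paths actually threaded have the prescribed lengths $\ell_i$ (which complicates both the "$k$ disjoint edges in $\mathcal{G}$" obstruction and the doubling construction); the whole chain of numeric inequalities -- the analogues of Lemmas \ref{lemma: first}--\ref{last}, the choice of the $X^+$-threshold, and the comparison of $\frac1{\ell_k}\binom{\ell_k}{r}$ with $\binom{\sum\ell_i'-1}{r-1}$ -- now interlocks $r$ with all of $\ell_1,\dots,\ell_k$ and must be controlled uniformly over $2\le r\le\sum\ell_i'$, in particular in the transitional regime where the two lower-bound constructions are of nearly equal size; and one must track the range of $r$ carefully through the induction, since deleting the longest path can push $r$ outside the stated window for the smaller forest.
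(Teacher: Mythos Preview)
The statement you are attempting to prove is labeled a \emph{conjecture} in the paper (it appears in the concluding remarks section), and the paper offers no proof of it. There is therefore nothing to compare your proposal against: the authors present the formula as a plausible extension of Theorem~\ref{thm: berge linear forest} motivated by the same lower-bound construction, but explicitly leave it open.

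Your proposal is not a proof either, and you say as much in your final paragraph. What you have written is a reasonable strategic outline for attacking the conjecture by adapting the paper's method to forests with mixed path lengths, together with an honest list of the places where the argument would have to be genuinely reworked rather than merely transcribed. The obstacles you name --- threading paths of \emph{prescribed} distinct lengths through the good order, re-deriving the numeric inequalities uniformly in $r$ over the full range $2\le r\le\sum\ell_i'$, and maintaining the induction window after deleting the longest path --- are real, and the paper does not resolve any of them. In particular, the paper's own main theorem already requires the side conditions $\ell'\ge r$ and $2\ell'\ge r+7$, so even in the equal-length case the full range $2\le r\le k\ell'$ claimed in the companion Conjecture~5.1 is not covered; your proposal inherits this gap and does not close it.

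In short: there is no proof in the paper to judge your attempt against, and your attempt is a sketch with acknowledged gaps, not a proof.
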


\section*{Acknowledgement}
This work is supported by the National Natural Science Foundation of China (Grant 12571372).

\section*{Declaration of competing interest}
The authors declare that they have no known competing financial interests or personal relationships that could have appeared to influence the work reported in this paper.

\section*{Data availability}
No data was used for the research described in the article.


\begin{thebibliography}{10}
\expandafter\ifx\csname urlstyle\endcsname\relax
  \providecommand{\doi}[1]{doi:\discretionary{}{}{}#1}\else
  \providecommand{\doi}{doi:\discretionary{}{}{}\begingroup
  \urlstyle{rm}\Url}\fi

\bibitem{berge1984hypergraphs}
C.~Berge.
\newblock \emph{Hypergraphs: Combinatorics of finite sets}, volume~45.
\newblock Elsevier, 1984.

\bibitem{bushaw2011turan}
N.~Bushaw and N.~Kettle.
\newblock Tur{\'a}n numbers of multiple paths and equibipartite forests.
\newblock \emph{Combinatorics, Probability and Computing}, 20(6):837--853,
  2011.

\bibitem{DavoodiGyoriMethukuTompkins2018}
A.~Davoodi, E.~Győri, A.~Methuku, and C.~Tompkins.
\newblock An Erdős–Gallai type theorem for uniform hypergraphs.
\newblock \emph{European Journal of Combinatorics}, 69:159--163, 2018.
\newblock \doi{10.1016/j.ejc.2017.12.003}.

\bibitem{erdos1959maximal}
P.~Erd{\H{o}}s and T.~Gallai.
\newblock On maximal paths and circuits of graphs.
\newblock \emph{Acta Mathematica Academiae Scientiarum Hungaricae},
  10(3-4):337--356, 1959.

\bibitem{gerbner2024turan}
D.~Gerbner.
\newblock The Tur{\'a}n number of Berge book hypergraphs.
\newblock \emph{SIAM Journal on Discrete Mathematics}, 38(4):2896--2912, 2024.

\bibitem{gerbner2020general}
D.~Gerbner, A.~Methuku, and C.~Palmer.
\newblock General lemmas for Berge-Tur{\'a}n hypergraph problems.
\newblock \emph{European Journal of Combinatorics}, 86:103082, 2020.

\bibitem{gerbner2019asymptotics}
D.~Gerbner, A.~Methuku, and M.~Vizer.
\newblock Asymptotics for the Tur{\'a}n number of Berge-$K_{2, t}$.
\newblock \emph{Journal of Combinatorial Theory, Series B}, 137:264--290, 2019.

\bibitem{gerbner2017extremal}
D.~Gerbner and C.~Palmer.
\newblock Extremal results for Berge hypergraphs.
\newblock \emph{SIAM Journal on Discrete Mathematics}, 31(4):2314--2327, 2017.

\bibitem{ghosh2024book}
D.~Ghosh, E.~Gy{\H{o}}ri, J.~Nagy-Gy{\"o}rgy, A.~Paulos, C.~Xiao, and
  O.~Zamora.
\newblock Book free 3-uniform hypergraphs.
\newblock \emph{Discrete Mathematics}, 347(3):113828, 2024.

\bibitem{gyHori2006triangle}
E.~Gy{\H{o}}ri.
\newblock Triangle-free hypergraphs.
\newblock \emph{Combinatorics, Probability and Computing}, 15(1-2):185--191,
  2006.

\bibitem{gyHori2016hypergraph}
E.~Gy{\H{o}}ri, G.~Y. Katona, and N.~Lemons.
\newblock Hypergraph extensions of the Erd{\H{o}}s-Gallai theorem.
\newblock \emph{European Journal of Combinatorics}, 58:238--246, 2016.

\bibitem{gyHori2012hypergraphs}
E.~Gy{\H{o}}ri and N.~Lemons.
\newblock Hypergraphs with no cycle of a given length.
\newblock \emph{Combinatorics, Probability and Computing}, 21(1-2):193--201,
  2012.

\bibitem{gyHori2021connected}
E.~Gy{\H{o}}ri, N.~Salia, and O.~Zamora.
\newblock Connected hypergraphs without long Berge-paths.
\newblock \emph{European Journal of Combinatorics}, 96:103353, 2021.

\bibitem{GYORI202536}
E.~Győri and N.~Salia.
\newblock Linear three-uniform hypergraphs with no Berge path of given length.
\newblock \emph{Journal of Combinatorial Theory, Series B}, 171:36--48, 2025.
\newblock ISSN 0095-8956.
\newblock \doi{https://doi.org/10.1016/j.jctb.2024.11.003}.



\bibitem{KHORMALI2022103506}
O.~Khormali and C.~Palmer.
\newblock Turán numbers for hypergraph star forests.
\newblock \emph{European Journal of Combinatorics}, 102:103506, 2022.
\newblock ISSN 0195-6698.
\newblock \doi{https://doi.org/10.1016/j.ejc.2022.103506}.

\bibitem{lidicky2012turan}
B.~Lidick{\`y}, H.~Liu, and C.~Palmer.
\newblock On the Tur{\'a}n number of forests.
\newblock \emph{arXiv preprint arXiv:1204.3102}, 2012.

\bibitem{yuan2021turan}
L.-T. Yuan and X.-D. Zhang.
\newblock Tur{\'a}n numbers for disjoint paths.
\newblock \emph{Journal of Graph Theory}, 98(3):499--524, 2021.

\bibitem{Zhang2025TuranTP}
H.~Zhang, X.~Zhao, and M.~Lu.
\newblock Tur\'an type problems for a fixed graph and a linear forest.
\newblock \emph{arXiv preprint arXiv:2507.11034}, 2025.

\bibitem{Zhou2025OnTP}
J.~Zhou, D.~Gerbner, and X.~Yuan.
\newblock On Tur\'an problems for berge forests.
\newblock \emph{arXiv preprint arXiv:2506.16140}, 2025.


\end{thebibliography}
\end{document}